\newcommand{\HH}{\mathfrak h}
\renewcommand{\H}{\mathbb H}
\newcommand{\A}{{\mathbb A}}
\newcommand{\Q}{{\mathbb Q}}
\newcommand{\Hl}{{\mathbb H}}
\newcommand{\Z}{{\mathbb Z}}
\newcommand{\R}{{\mathbb R}}
\newcommand{\C}{{\mathbb C}}
\renewcommand{\O}{{\mathcal O}}
\renewcommand{\P}{\mathfrak P}
\newcommand{\GL}{{\rm GL}}
\newcommand{\PGL}{{\rm PGL}}
\newcommand{\SL}{{\rm SL}}
\newcommand{\SO}{{\rm SO}}
\newcommand{\diag}{\mathrm{diag}}
\newcommand{\sym}{\mathrm{sym}}
\newcommand{\e}{{\bf e}}
\newcommand{\cG}{{\cal G}}
\newcommand{\cH}{{\cal H}}
\newcommand{\cP}{{\cal P}}
\newcommand{\cN}{{\cal N}}
\newcommand{\cM}{{\cal M}}
\newcommand{\cL}{{\cal L}}
\newcommand{\bA}{{\mathbb A}}
\newcommand{\GSO}{{\rm GSO}}
\newcommand{\GO}{{\rm GO}}
\newcommand{\OO}{{\rm O}}
\newcommand{\mat}[4]{{\setlength{\arraycolsep}{0.5mm}\left[
		\begin{smallmatrix}#1&#2\\#3&#4\end{smallmatrix}\right]}}
\newcommand{\forget}[1]{}
\def\qdots{\mathinner{\mkern1mu\raise0pt\vbox{\kern7pt\hbox{.}}\mkern2mu
		\raise3.4pt\hbox{.}\mkern2mu\raise7pt\hbox{.}\mkern1mu}}
\newtheorem{lemma}{Lemma.}[section]
\newtheorem{theorem}[lemma]{Theorem.}
\newtheorem{proposition}[lemma]{Proposition.}
\newtheorem{definition}[lemma]{Definition.}
\newtheorem{remark}[lemma]{Remark.}
\begin{document}
\title{An explicit lifting construction of CAP forms on $\OO(1,5)$
} 
\author{Hiro-aki Narita, Ameya Pitale, Siddhesh Wagh}
\maketitle
\begin{abstract}
We explicitly construct non-tempered cusp forms on the orthogonal group O(1,5) of signature $(1+,5-)$. Given a definite quaternion algebra $B$ over $\Q$, the orthogonal group is attached to the indefinite quadratic space of rank 6 with the anisotropic part defined by the reduced norm of $B$. Our construction can be viewed as a generalization of \cite{Mu-N-P} to the case of any definite quaternion algebras, for which we note that \cite{Mu-N-P}  takes up the case where the discriminant of $B$ is two. Unlike \cite{Mu-N-P} the method of the construction is to consider the theta lifting from Maass cusp forms to $O(1,5)$, following the formulation by Borcherds. The cuspidal representations generated by our cusp forms are studied in detail. We determine all local components of the cuspidal representations and show that our cusp forms are CAP forms.
\end{abstract}	
	
      \tableofcontents

\section{Introduction}
Since the discovery of counterexamples to the Ramanujan conjecture by Saito-Kurokawa lifting \cite{Kur} and Howe-Piatetskii-Shapiro \cite{HPs} et al. we have known that one has to take into consideration the existence of cuspidal representations with a non-tempered local component towards the classification of cuspidal representations. We call such cusp forms non-tempered. The representation theoretic study of \cite{Kur} and \cite{HPs} by Piatetskii Shapiro \cite{Ps} leads to the notion of CAP representations, namely cuspidal representations nearly equivalent to irreducible constituents of parabolic inductions~(see Definition \ref{CAP-def}). There have been active representation theoretic studies on CAP representation~(cf.~Soudry \cite{So88}, Gelbart-Rogawski \cite{GeRo}, Rallis-Schiffmann \cite{RaSch89}, Ginzburg \cite{Gb},~Ginzburg-Rallis-Soudry \cite{GRS},~Ginzburg-Jiang-Soudry~\cite{GJS}, et. al). The CAP representations are expected to exhaust a large class of non-tempered cusp forms.

We are motivated by non-holomorphic real analytic construction of non-tempered cusp forms. Our study began with \cite{Mu-N-P}, which provided a non-tempered cusp form on $\GL_2(B)$ for a division quaternion algebra $B$ over $\Q$ with discriminant $2$. This was inspired by the paper \cite{Pt} of the second named author, whose tool is the converse theorem by Maass \cite{Mas}. We have also constructed non-tempered cusp forms on the orthogonal group $\OO(1,8n+1)$ in \cite{LNP} by Borcherds' theta lifting~(cf.~\cite{Bo}). Note that there is an accidental isomorphism relating $\PGL_2(B)$ with $\SO(1,5)$ or $\OO(1,5)$ as $\Q$-algebraic groups~(cf.~Section \ref{accidental-isom}), where $\SO(1,5)$ and $\OO(1,5)$ are attached to the quadratic form of signature $(1+,5-)$ whose anisotropic part is defined by the reduced norm of $B$. Following the approach of \cite{LNP} this paper constructs non-tempered cusp forms on $\OO(1,5)$ for the case of any definite quaternion algebra $B$, namely with no restriction on the discriminants of $B$. They turn out to satisfy the CAP properties. The lifting constructions from smaller groups are typical ways to find examples of non-tempered cusp forms. For references in this direction we cite Oda \cite{Od}, Rallis-Schiffmann \cite{Ra-Sch81}, Ikeda \cite{Ik1, Ik2}, Ikeda-Yamana \cite{Ik-Ya},~Yamana \cite{Ya1, Ya2} and Kim-Yamauchi \cite{Ki-Yam} et al.

Let us now describe the main results of the paper. Let $d_B$ be the discriminant of a definite quaternion algebra $B$ over $\Q$. For a maximal order $\O$ of $B$, let $\O'$ be the dual lattice of $\O$ with respect to the reduced trace of $B$. We denote by $Q_{A_0}$~(cf.~Sections \ref{Orthogonal-gp}) the quadratic form attached to the reduced norm of $B$. Let $\Gamma$ be the stabilizer of the lattice $\O\oplus\Z^2$ in the $\Q$-rational points of the orthogonal group defined by $Q_{A}=Q_{A_0}\oplus
\begin{pmatrix}
0 & 1\\
1 &0
\end{pmatrix}$~(cf.~Sections \ref{Orthogonal-gp}). The space of modular forms on  the $5$-dimensional hyperbolic space with respect to $\Gamma$ is denoted by $\cM(\Gamma, \sqrt{-1}r)$ and any $F \in \cM(\Gamma, \sqrt{-1}r)$ has the Fourier expansion (see Section \ref{Orthogonal-gp} for details on notations)
\begin{equation}\label{Fourexpintro}
F(n(x) a_y) = \sum\limits_{\beta \in \O'} A(\beta) y^2 K_{\sqrt{-1}r}(4 \pi \sqrt{Q_{A_0}(\lambda)} ny) e(n {}^t\lambda A_0 x).
\end{equation}
Our construction is given by a theta lift $F_f$ from Maass cusp forms $f$ of level $d_B$, with Fourier coefficients $A(\beta)$ explicitly described in terms of Fourier coefficients $c(m)$ of  $f$. 
  To describe the formula for $A(\beta)$, let us introduce the set of the primitive elements as follows:
	$$\O'_{\rm prim} \coloneqq \{ \beta \in \O' : \frac 1n \beta \not\in \O' \text{ for all positive integers } n > 1\}.$$ 
Write $\beta \in \O'$ as
		$$\beta = \prod\limits_{p | d_B}p^{u_p} n \beta_0, \qquad u_p \geq 0, n > 0, {\rm gcd}(n, d_B) = 1 \text{ and } \beta_0 \in \O'_{\rm prim}.$$
		Let $q_{\beta_0} = q_{\mu_{\beta_0}}$, which is a divisor of $d_B$~(cf.~Section \ref{scalar-to-vector-sec}). For $p | d_B$, set 
		$$\delta_p = \begin{cases} 0 & \text{ if } p | q_{\beta_0};\\ 1 & \text{ if } p \nmid q_{\beta_0}.\end{cases}$$
		Let us assume that the Maass cusp form $f$ has the Atkin-Lehner eigenvalue $\epsilon_p$ at $p |d_B$ and has the trivial central character. Define
		\begin{equation}\label{Abetaintro}
			A(\beta) := \sqrt{Q_{A_0}(\beta)} \sum\limits_{p | d_B} \sum\limits_{t_p = 0}^{2u_p+\delta_p} \sum\limits_{d | n} c\big(\frac{-Q_{A_0}(\beta)}{\prod\limits_{p | d_B}p^{t_p-1}d^2}\big)\prod\limits_{p|d_B}(-\varepsilon_p)^{t_p-1}.
		\end{equation} 
Putting together the results obtained in Theorem \ref{modularity-thm}, Proposition \ref{Four-coeff-equal-prop}, Proposition \ref{cuspidality-prop}, Theorem \ref{Ff-Eform}, Theorem \ref{C11-EV} and Theorem \ref{non-vanishing-thm} we have the following result.
\begin{theorem}\label{main-thm1-intro}
Let $B$ be a definite quaternion division algebra with discriminant $d_B$, which is square-free by definition, and let $\O$ be any maximal order of $B$. Let $f \in S(\Gamma_0(d_B), r)$, be an Atkin-Lehner eigenfunction with eigenvalues $\epsilon_p$ for $p | d_B$. Let $F_f$ be a function on the $5$-dimensional hyperbolic space given by the Fourier expansion (\ref{Fourexpintro}) with coefficients $A(\beta)$ given in (\ref{Abetaintro}). Then the following is true:
\begin{enumerate}
\item $F_f$ is a non-zero, cusp form in $\cM(\Gamma, \sqrt{-1}r)$ for all non zero $f$.

\item Suppose further that $f$ is a Hecke eigenform with eigenvalues $\lambda_p$ for all $p \nmid d_B$. Then $F_f$ is also an eigenfunction for the Hecke algebra $\cH_p$ for all primes $p$.

\item For $p \nmid d_B$, let $\mu_i, i = 1, 2, 3$ be the Hecke eigenvalues for $F_f$ corresponding to the three generators $C_3^{(i)}, i= 1, 2, 3$ of $\cH_p$. Then we have
\begin{equation*}
			\mu_1 = p^2(\lambda_p^2 -2)+ p f_{2,1} = p^2(\lambda_p^2 + p + p^{-1})
		\end{equation*}
		\begin{equation*}
			\mu_i = |R_2^{(i-1)}|\left(\mu_1 - \frac{p^{i-1}-1}{p^i-1}f_{3,1}\right), (i = 2,3)
		\end{equation*}
See (\ref{f-defn}) and (\ref{R-defn}) for the definition of $|R_2^{(i-1)}|$ and $f_{3,1}$.
\item Suppose that $f$ is a new form. For $p | d_B$, let $\mu$ be the Hecke eigenvalue of $F_f$ for the Hecke operator $C_1^{(1)}$, which generates $\cH_p$. Then we have
$$\mu = p^3 + p^2 - p + 1.$$
\end{enumerate}
\end{theorem}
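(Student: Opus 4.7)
The plan is to assemble Theorem~\ref{main-thm1-intro} from the six auxiliary results cited in its statement, invoking them in logical order rather than attempting an independent argument. I would proceed as follows.

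First, I would realize $F_f$ explicitly as a Borcherds theta integral: pair $f$ against a theta kernel on $\SL_2\times\OO(1,5)$ built from a Schwartz--Bruhat function encoding the lattice $\O\oplus\Z^2$ and its dual, with respect to the quadratic form $Q_A$. Modularity of the resulting function under $\Gamma$, i.e.\ membership in $\cM(\Gamma,\sqrt{-1}r)$, is then the content of Theorem~\ref{modularity-thm}, which one expects to follow from the $\SL_2(\Z)$-equivariance of the Weil representation together with the $\Gamma_0(d_B)$-invariance of $f$.

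Next I would unfold the theta kernel along the unipotent radical of the standard parabolic of $\OO(1,5)$ to read off the Fourier expansion of $F_f$. This calculation produces Proposition~\ref{Four-coeff-equal-prop}: the coefficients $A(\beta)$ match the formula~(\ref{Abetaintro}). The decomposition $\beta=\prod_{p\mid d_B}p^{u_p}\,n\,\beta_0$ is exactly what separates the local contributions at ramified places (which absorb the Atkin--Lehner eigenvalues $\epsilon_p$) from the unramified factor, over which the sum $\sum_{d\mid n}$ arises from the local unfolding of the characteristic function of the lattice. Cuspidality is then Proposition~\ref{cuspidality-prop}, and non-vanishing is Theorem~\ref{non-vanishing-thm}; the latter should be proved by exhibiting a specific $\beta_0\in\O'_{\rm prim}$ for which the coefficient survives, using multiplicativity of $c(m)$ to prevent accidental cancellation.

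For parts (ii), (iii) and (iv), the strategy is a direct computation of the Hecke action on Fourier coefficients from~(\ref{Abetaintro}). At an unramified prime $p\nmid d_B$, I would write each generator $C_3^{(i)}$ as a union of explicit cosets, express $(C_3^{(i)}F_f)(n(x)a_y)$ as a finite linear combination of the $A(\beta')$ with $\beta'$ running over shifts of $\beta$, and then apply the Maass-form Hecke relation for $\lambda_p$ to reassemble the answer as a scalar multiple of $A(\beta)$. The scalars are exactly the $\mu_i$ of Theorem~\ref{Ff-Eform}. At a ramified prime $p\mid d_B$, the Hecke algebra $\cH_p$ is generated by $C_1^{(1)}$ alone, and a similar but simpler coset calculation uses the Atkin--Lehner eigenvalue $\epsilon_p$ together with the inner sum $\sum_{t_p=0}^{2u_p+\delta_p}(-\epsilon_p)^{t_p-1}$ in~(\ref{Abetaintro}); the newform hypothesis makes the geometric-series cancellation uniform and yields the universal eigenvalue $\mu=p^3+p^2-p+1$ of Theorem~\ref{C11-EV}, independent of $f$.

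The main obstacle is the Hecke eigenvalue calculation at the unramified primes: one must fix an explicit coset decomposition of each $C_3^{(i)}$, track how it permutes the summation indices in~(\ref{Abetaintro}), and then invoke the Hecke relation in the right order so that the sum over $d\mid n$ collapses correctly into $A(\beta)$ times a scalar. The ramified calculation is conceptually cleaner, but its universality -- the fact that $\mu$ does not depend on $f$ -- is subtle and hinges on the newform hypothesis ensuring that only the extremal $t_p$-terms survive; this is precisely the local feature that will make the resulting cuspidal representation CAP at $p\mid d_B$.
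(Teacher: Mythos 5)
Your assembly of the theorem from the auxiliary results is organized in the same way the paper does it, and your sketches of the modularity step (Borcherds theta kernel, Weil representation equivariance), the Fourier coefficient computation, cuspidality, and the Hecke calculations at both ramified and unramified primes are consistent with what the paper does. However, your treatment of the non-vanishing claim in part (i) is the one step where your proposed method would actually fail, and it is precisely the step where the paper departs from its predecessors.

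You propose to prove non-vanishing ``by exhibiting a specific $\beta_0\in\O'_{\rm prim}$ for which the coefficient survives, using multiplicativity of $c(m)$ to prevent accidental cancellation.'' Two problems. First, the theorem asserts $F_f\ne 0$ for every nonzero Atkin--Lehner eigenfunction $f$, not only for Hecke eigenforms; for a general $f$ (a nontrivial linear combination of newforms with the same Atkin--Lehner signs) the coefficients $c(m)$ are not multiplicative, and even $c(-1)$ can vanish, so there is no distinguished $\beta_0$ to point at. Second, even restricting to a Hecke eigenform, the direct approach requires finding $\beta\in\O'$ with $Q_{A_0}(\beta)=M$ for some $M$ where $c(-M)\ne 0$; for an arbitrary maximal order $\O$ the norm form need not represent a prescribed integer, and the paper explicitly flags this obstruction in the Remark following Theorem~\ref{non-vanishing-thm} (``there is no guarantee that, for an arbitrary maximal order $\O$, there exists $\beta\in\O'$ such that $Q_{A_0}(\beta)=M$''). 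The paper circumvents this by a different mechanism: first one checks that for a Hecke eigenform, $c(-1)\ne 0$ and $1\in\O'$ force $A(1)\ne 0$, so $f\mapsto F_f$ is injective on a Hecke eigenbasis. Then one chooses, via Ramakrishnan's Corollary~4.1.3 in \cite{Ra00}, a prime $p\nmid d_B$ at which the eigenvalues $\lambda_p^{(i)}$ of the basis eigenforms have pairwise distinct absolute values, so the lifted eigenvalues $\mu_{p,1,i}=p^2\bigl((\lambda_p^{(i)})^2+p+p^{-1}\bigr)$ are pairwise distinct, and a Vandermonde determinant argument (applying $C_{3,p}^{(1)}$ repeatedly to a hypothetical vanishing linear combination $\sum c_iF_i=0$) forces each $c_i=0$. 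This is why Theorem~\ref{Ff-Eform} (Hecke eigenform property) is logically prior to non-vanishing in the paper; your proposal instead treats non-vanishing as a self-contained Fourier-coefficient computation, which does not go through in the stated generality.

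A minor imprecision in part (iv): the ramified-prime eigenvalue $p^3+p^2+p-1$ does not arise from ``only the extremal $t_p$-terms surviving.'' The newform relation $c(pm)=-\tfrac{\epsilon_p}{p}c(m)$ converts the inner $t_p$-sum in~(\ref{Abetaintro}) into a full geometric series $\frac{p^{2u_p+\delta_p+1}-1}{p-1}$ (equation~(\ref{geom-sum-ell})), and the eigenvalue emerges from an algebraic identity combining the three terms $p^2A(p\beta)$, $(p^2-1)A(\beta)$, $p^2A(p^{-1}\beta)$; no terms are dropped. The intuition is correct that the newform hypothesis is what makes the answer independent of $f$, but the mechanism is the collapse of the geometric series, not a cancellation that isolates endpoints.
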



By adelizing our explicit lifts in terms of their Fourier expansion we can develop their Hecke theory to obtain the theorem above. This also enables us to understand the cuspidal representations generated by the lifts explicitly. 
\begin{theorem}[Theorem \ref{explicit-cusp-rep},~Proposition \ref{CAP-forms}, Proposition \ref{Std-L-function}]
	Suppose that the Maass cusp form $f$ is a new form with the trivial central character, and Hecke eigenvalues $\lambda_p$ for primes $p \nmid d_B$. Let $\pi$ be the cuspidal representation of $\OO(1,5)(\A)$ generated by the lift $F_f$ from $f$.
	\begin{enumerate}[(1)]
	\item The representation $\pi$ is irreducible and decomposes into the restricted tensor product $\pi = \otimes'_{v\leq \infty}\pi_v$ of irreducible admissible representations $\pi_v$.
	
	\item For $v = p < \infty$, if $p \nmid d_B$ then $\pi_p$ is the spherical constituent of the unramified principal series representation of $\OO(1,5)(\Q_{p})\simeq \OO(3,3)(\Q_{p})$ with the Satake parameter
	\begin{equation*}
		\diag\left(\left(\frac{\lambda_p+\sqrt{\lambda_p^2-4}}{2}\right)^2,p,1,1,p^{-1},\left(\frac{\lambda_p+\sqrt{\lambda_p^2-4}}{2}\right)^{-2}\right).
	\end{equation*}

	\item For $v = p < \infty$, if $p \mid d_B$ then $\pi_p$ is the spherical constituent of the spherical representation $I(\chi)$ of $\OO(1,5)(\Q_{p})$ induced from the unramified character $\chi$ of the split torus of $\OO(1,5)(\Q_{p})$ isomorphic to $\Q_{p}^{\times}$ with $\chi(p)=p$.

	\item For every finite prime $p$, $\pi_p$ is non-tempered. Suppose that the Selberg conjecture on the minimal Laplace eigenvalue holds for $f$. Then $\pi_\infty$ is tempered. 
      \item The cuspidal representation is a CAP representation associated with some explicit parabolic induction of $\OO(3,3)(\A)$. 
      \item Let  $\sigma$ denote the cuspidal representation of $\GL_2(\A)$ generated by $f$. Let $\Pi = {\rm Ind}_{P_{2,2}(\A)}^{\GL_4(\A)} (|\det|_{\bA}^{-1/2}\sigma\times|\det|_{\bA}^{1/2}\sigma)$, with the parabolic subgroup $P_{2,2}$ of $\GL_4$ with Levi part $\GL_2\times \GL_2$. By $L(F_f,{\rm std},s)$~(respectively~$L(\Pi,\wedge,s)$) we denote the standard $L$-function for the lift $F_f$~(respectively~exterior square $L$-function of $\Pi$). We have
\[
L(F_f,{\rm std},s)=L(\Pi,\wedge,s)=L(\sym^2(f),s)\zeta(s-1)\zeta(s)\zeta(s+1),
\]
	\end{enumerate}
	\end{theorem}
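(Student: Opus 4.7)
The plan is to prove the six parts by a place-by-place local analysis, exploiting the explicit Hecke eigenvalue formulas of Theorem \ref{main-thm1-intro} together with the accidental isomorphism between $\OO(1,5)$ and $\OO(3,3)$ at unramified places (and with $\PGL_2(B)$ archimedeanly).

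\textbf{Parts (1)--(3): local structure.} First I would show that $\pi$ decomposes as a restricted tensor product. Since $F_f$ is a simultaneous Hecke eigenfunction at every finite prime and an eigenfunction of the archimedean Laplacian, $\pi$ lies in a single Hecke eigenline; combined with admissibility this yields $\pi = \otimes'_v \pi_v$ with each $\pi_v$ irreducible and spherical at finite $v$. For $p \nmid d_B$, the three Hecke eigenvalues $\mu_1,\mu_2,\mu_3$ of Theorem \ref{main-thm1-intro}(iii) are symmetric functions of the six diagonal entries of the Satake parameter of $\OO(3,3)(\Q_p)\simeq \OO(1,5)(\Q_p)$; inverting this system, using the identity $\mu_1/p^2 = \lambda_p^2 + p + p^{-1}$, pins down the parameter as $\diag(\alpha_p^2, p, 1, 1, p^{-1}, \alpha_p^{-2})$ with $\alpha_p + \alpha_p^{-1} = \lambda_p$. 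For $p \mid d_B$, the lone spherical Hecke eigenvalue $\mu = p^3+p^2-p+1 = (p+1)(p^2+1)$ singles out the unramified character $\chi$ with $\chi(p)=p$ among all candidates.

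\textbf{Part (4): temperedness.} The Satake parameter at every finite place carries the entry $p$, whose absolute value is not $1$, so $\pi_p$ is non-tempered. Archimedeanly, the accidental isomorphism $\PGL_2(B) \simeq \SO(1,5)$ identifies $\pi_\infty$ with the representation of $\PGL_2(B_\R)$ generated by $f$ under the corresponding transfer; under Selberg's conjecture the spectral parameter $r$ is real, which translates exactly to temperedness of the resulting archimedean principal series.

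\textbf{Parts (5)--(6): CAP and $L$-functions.} Using the isogeny $\mathrm{Spin}(3,3) \simeq \SL_4$, the parabolic induction $\Pi = \Ind_{P_{2,2}(\A)}^{\GL_4(\A)}(|\det|_\A^{-1/2}\sigma \times |\det|_\A^{1/2}\sigma)$ transports to a parabolically induced representation of $\OO(3,3)(\A)$ whose local Satake parameters at $p \nmid d_B$ are exactly $(\alpha_p^2, p, 1, 1, p^{-1}, \alpha_p^{-2})$, agreeing with those of $\pi_p$; this gives near equivalence and hence CAP. For the $L$-function, the standard factor at $p \nmid d_B$ is
\begin{equation*}
L_p(F_f, \mathrm{std}, s) = \prod_{i=1}^{6} (1 - \alpha_{i,p}\, p^{-s})^{-1} = L_p(\sym^2 f, s)\,\zeta_p(s-1)\zeta_p(s)\zeta_p(s+1),
\end{equation*}
via the identity $(1-\alpha_p^2 p^{-s})(1-p^{-s})(1-\alpha_p^{-2} p^{-s}) = L_p(\sym^2 f, s)^{-1}$ combined with the contributions of the entries $p^{\pm 1}$ and the two $1$'s. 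The exterior square side follows from $\wedge^2(V_1 \oplus V_2) = \wedge^2 V_1 \oplus \wedge^2 V_2 \oplus (V_1 \otimes V_2)$ applied to $V_i = |\det|_\A^{\pm 1/2}\sigma$; at $p \mid d_B$ one verifies the matching directly from the explicit $\pi_p$ and $\Pi_p$. The \emph{main obstacle} is rigorously promoting this local matching to a global CAP identification at the ramified primes $p \mid d_B$, where $\pi_p$ is described indirectly and one must check that the induced representation $\Pi$ has the correct ramification data to produce near equivalence rather than mere agreement of $L$-factors.
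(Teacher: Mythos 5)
Your overall roadmap matches the paper's, but there are two genuine gaps and a couple of slips.

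\textbf{Part (1) — irreducibility.} You assert that because $F_f$ is a simultaneous Hecke eigenfunction at all finite places and an eigenfunction of the archimedean Casimir, the cyclic representation $\pi$ it generates must be irreducible. This does not follow on its own: the cyclic span of a cuspidal automorphic form that is a Hecke eigenform everywhere can a priori decompose into several irreducible summands with identical local data (no multiplicity-one theorem for $\OO(1,5)$ is assumed or available here). The paper does not argue this way — it invokes a specific irreducibility criterion, Theorem 3.1 of the Narita–Pitale–Schmidt paper (cited as \cite{NPS}), combined with the adelic automorphy in Proposition \ref{adelic-automorphy-thetalift}. Without an argument of that type (or a substitute), your Part (1) has a real hole.

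\textbf{Part (4) — archimedean temperedness.} You propose using the accidental isomorphism $\PGL_2(B) \simeq \SO(1,5)$ to identify $\pi_\infty$ with a representation of $\PGL_2(B_\R)$ ``generated by $f$ under the corresponding transfer.'' This does not make sense as stated: $f$ is a classical weight-$0$ Maass cusp form on $\Gamma_0(d_B) \backslash \mathfrak{H}$ (i.e., a form on $\GL_2$ over $\Q$), not an automorphic form on $\GL_2(B)$, so there is no ``transfer of $f$ to $\PGL_2(B_\R)$'' to identify $\pi_\infty$ with. The paper instead computes $\pi_\infty$ directly as the normalized parabolic induction $I_{P_\infty}^{G_\infty}(\delta_{\sqrt{-1}r})$ from the minimal parabolic of $\cG(\R)$ (Proposition \ref{piF-infty}, which cites Proposition 5.5 of \cite{LNP}), using the Casimir eigenvalue and the Fourier expansion of the lift; temperedness under Selberg's conjecture then follows because $r\in\R$ makes the inducing character unitary.

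\textbf{Smaller issues.} In Part (3) you wrote $\mu = p^3 + p^2 - p + 1 = (p+1)(p^2+1)$; neither side is correct. The eigenvalue established in Theorem \ref{C11-EV} (and matched with the principal series in Proposition \ref{nontemp-ram}) is $p^3 + p^2 + p - 1$, while $(p+1)(p^2+1) = p^3 + p^2 + p + 1$; the value quoted in the introduction's Theorem \ref{main-thm1-intro}(iv) appears to contain a typo. Your matching $p^2\chi(p) + p^2 - 1 + p^2\chi(p)^{-1} = \mu$ still yields $\chi(p)\in\{p,p^{-1}\}$, so the conclusion survives once the constant is fixed. In Part (5), your worry about ``rigorously promoting this local matching to a global CAP identification at the ramified primes'' is misplaced: by Definition \ref{CAP-def} CAP is a near-equivalence statement, i.e., agreement of local components at almost all places, so matching the Satake parameters at the unramified primes $p\nmid d_B$ — which you do — already suffices, and the paper does exactly this. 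Finally, for Part (6) your unramified-place computation and the $\wedge^2(V_1\oplus V_2)$ decomposition are correct in spirit, but the matching at $p\mid d_B$ is real work: the paper uses the $(n_0,\partial)=(4,2)$ case of Sugano's local $L$-factor together with the Steinberg symmetric-square factor from \cite{GeJa} and the exterior-square factorization of \cite{J}; merely saying ``one verifies the matching directly'' skips that content.
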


Let us note that a priori, the lift $F_f$ depends on the discriminant $d_B$ of the quaternion algebra $B$, the Atkin-Lehner  eigenform $f \in S(\Gamma_0(d_B), r)$ and the maximal order $\O$ in $B$. The above theorem shows that the local components of the representation $\pi$ generated by $F_f$ are in fact independent of the maximal order $\O$ and the Atkin-Lehner eigenvalues of $f$. 
It is interesting that the explicit Fourier coefficients $A(\beta)$ clearly depend on the maximal order $\O$ and the Atkin-Lehner eigenvalues $\epsilon_p$ for $p | d_B$, while the local components of the cuspidal automorphic representation does not.  A multiplicity one theorem for $\OO(1,5)$ would imply that different maximal orders would give lifts which are different vectors in the same cuspidal automorphic representations. Such a multiplicity one theorem is not currently available but is expected since we have the multiplicity one theorem by Badulescu and Renard  \cite{Bad-R} for the group  $\PGL_2(B)$. 
	
There are a few significant differences between the results and methods of this paper as compared to our previous work in \cite{LNP, Mu-N-P}. In \cite{Mu-N-P}, we restricted ourselves to the case $d_B = 2$. Here the discrete group $\Gamma$ was generated by translations and an inversion. The Maass converse theorem \cite{Mas} gives a criterion for modularity with respect to such groups, and we used it to show that the proposed lift in \cite{Mu-N-P} is a modular form. The only other cases for which the discrete subgroup $\Gamma$ has such generators is when $\O$ is the (unique) maximal order in a quaternion division algebra $B$ with discriminant $d_B = 3, 5$. In this case, we have obtained the proof of modularity of our lift using the Maass converse theorem, but we have not included it in this article since it applies only to $d_B = 3, 5$. Instead, to prove modularity, we have used the more general method of Borcherds theta lifts as in \cite{LNP}. 

In \cite{LNP}, we were constructing lifts to modular forms on $\OO(1, 8n+1)$ starting from Maass forms of full level. For the lifting in Theorem \ref{main-thm1-intro} above, we need to consider Maass forms with square-free level $d_B$. For the Borcherds theta lift method to work, an initial step is to transition from scalar valued Maass forms with level $d_B$ to vector valued modular forms with respect to the Weil  representation of $\SL_2(\Z)$. We work out the corresponding vector valued modular forms and obtain explicit formulas for their Fourier coefficients. This is an active area of research, and the explicit formulas for the Fourier coefficients in the square-free case might be of independent interest. 

The explicit formula (\ref{Abetaintro}) for the Fourier coefficients $A(\beta)$ in Proposition \ref{Four-coeff-equal-prop} needs subtle understanding of the structure of the discriminant form $\O'/\O$ to determine which elements of $\O'$ correspond to which cusps of $\Gamma_0(d_B)$. Furthermore, since the maximal order $\O$ is arbitrary, even the explicit nature of the formula for $A(\beta)$ is not sufficient	to obtain non-vanishing. On the contrary, in \cite{LNP} and \cite{Mu-N-P}, we only consider special cases of lattices whose norm map is surjective, thus reducing the non-vanishing of $\A(\beta)$ to that of $c(-M)$ for a suitable positive integer $M$. For the non-vanishing of the lift from Theorem \ref{main-thm1-intro}, we could perhaps use the Bhargava's $15$ Theorem \cite{Ba} to show that the norm map is surjective for special cases of maximal orders. But for obtaining the theorem in full generality we use another approach using a simple idea from linear algebra. This requires us to first show that the map $f \to F_f$ takes Hecke eigenforms to Hecke eigenforms. For the non-vanishing, it is enough to show the Hecke property at just one prime $p \nmid d_B$ as long as the map $f \to F_f$ is injective on a Hecke eigenbasis of $S(\Gamma_0(d_B), r)$. We should remark that there is a well known approach to the non-vanishing of theta lifts using the inner product formula initiated by Rallis \cite{Ra}. Our method is very different and elementary.

To obtain the Hecke theory, we use the work of Sugano \cite{Sug}. The case of $p \nmid d_B$ follows directly as in \cite{LNP}. The Hecke theory for primes $p | d_B$ requires a detailed analysis of the non-split group and makes use of the explicit formula of the Fourier coefficients $A(\beta)$.

Let us explain the outline of the paper. In Section \ref{Orthogonal-gp} we begin with the review on the orthogonal groups over which we work. This section includes fundamental facts on definite quaternion algebras and accidental isomorphims necessary for the coming discussion. Section \ref{vvmf-sec} is devoted to a detailed study on vector-valued modular forms. This section includes an explicit description of vector-valued forms lifted from Maass cusp forms with square-free levels, which is indispensable for deducing an explicit formula for Fourier coefficients of our lifts. In Section \ref{theta-lift} we formulate our lifts as the theta lifts to $O(1,5)$ in the non-adelic setting and provide their explicit formula for the Fourier coefficients. The lifts are proved to be cuspidal in Section \ref{cuspidality-section}.

To obtain the representation theoretic aspect of our lifts we adelize them and discuss their Hecke theory in Section \ref{Hecke-theory-sec}. In Section \ref{non-van-sec} we obtain the non-vanishing of our lifts by virtue of our study on the Hecke theory. In Section \ref{CAP-sec} we have a detailed understanding of the cuspidal representations generated by our lifts, all of whose local components are determined explicitly. 
 As a result our lifts are non-tempered at every non-archimedean place while they are tempered at the archimedean place under the assumption that the Selberg conjecture on the minimal Laplace eigenvalue holds for Maass cusp forms $f$. The lifts are then proved to be CAP forms attached to some explicitly given parabolic induction for the split orthogonal group $\OO(3,3)$. 
Section \ref{CAP-sec} ends with an explicit formula for the global standard $L$-functions of the lifts from Maass cusp forms, whose statement is given as Proposition \ref{Std-L-function}. The definition follows Sugano \cite[Section 7 (7.6)]{Sug}. Proposition \ref{Std-L-function} also show that our global standard $L$ function coincides with the exterior square $L$-function for some parabolic induction of $GL_4$. 

\subsection*{Acknowledegment}
We would like to thank Siegfried Boecherer for several fruitful discussions, in particular for sharing the main idea which led to the proof of non-vanishing in Theorem \ref{non-van-sec}. We would also like to thank Abhishek Saha for bringing to our attention the paper \cite{Ra00} which was used in the proof of Theorem \ref{non-van-sec}. 

This work was supported by the Research Institute for Mathematical Sciences, an International Joint Usage/Research Center located in Kyoto University. The third author was supported by the ISRAEL SCIENCE FOUNDATION grants No. 376/21 and 421/17.

\section{Preliminaries}\label{Orthogonal-gp}
In this section, we give the definitions of orthogonal groups, modular forms and quaternion algebras. We also give details on certain accidental isomorphisms.
	\subsection{Orthogonal groups and modular forms}\label{oth-gp-modfm-section}
	Let $A_0 \in M_4(\Q)$ be a positive definite symmetric matrix, and put $A = \begin{bmatrix}&&1\\&-A_0\\1\end{bmatrix}$. By $\cG$ and $\cH$ we denote the $\Q$-algebraic groups defined by 
	\[
	\cG(\Q)=\{ g\in \GL_{6}(\Q)\mid {}^tgAg=A\},\quad\cH(\Q)=\{h\in \GL_4(\Q)\mid {}^thA_0h=A_0\}
	\]
	respectively. Both $\cG$ and $\cH$ are referred to as orthogonal groups. We introduce the standard proper $\Q$-connected parabolic subgroup $\cP$ of $\cG$ defined by the Levi decomposition $\cP=\cN\cL$ with
	\begin{align*}
	\cN(\Q)&=\left\{n(x)=\left.
	\begin{pmatrix}
		1 & {}^tx A_0& \frac{1}{2}{}^txA_0x\\
		& 1_4 & x\\
		& & 1
	\end{pmatrix}~\right|~x\in\Q^4\right\},\\
	\cL(\Q)&=\left\{a_\alpha = \left.
	\begin{pmatrix}
		\alpha & & \\
		& h & \\
		& & \alpha^{-1}
	\end{pmatrix}~\right|~\alpha\in\Q^{\times},~h\in\cH(\Q)\right\}.
	\end{align*}
	Assume that $L_0$ is a maximal even integral lattice in $\Q^4$ with respect to $A_0$. We put
	\[
	L\coloneqq\left\{\left.
	\begin{pmatrix}
		x\\
		y\\
		z
	\end{pmatrix}~\right|~x,z\in\Z,~y\in L_0\right\}=L_0\oplus\Z^2.
	\]
	This is a maximal lattice with respect to $A$. We let $\Gamma\coloneqq\{\gamma\in\cG(\Q)\mid\gamma L=L\}$.
	
	Let $\bA$ be the adele ring of $\Q$ and $\bA_f$ be the set of finite adeles in $\bA$. We consider the adelizations of the $\Q$-algebraic groups above, denoted by $\cG(\bA),~\cH(\bA),~\cP(\bA),~\cN(\A)$ and so on. 
	Let $L_{p}\coloneqq L\otimes\Z_p$ and $L_{0,p}\coloneqq L_0\otimes\Z_p$ and we put $K_f\coloneqq\prod_{p<\infty}K_p$ and $U_f\coloneqq\prod_{p<\infty}U_p$ with 
	\[
	K_p\coloneqq\{k\in\cG(\Q_p)\mid kL_{p}=L_{p}\},\quad U_p\coloneqq\{u\in\cH(\Q_p)\mid uL_{0,p}=L_{0,p}\}
	\]
	for each finite prime $p<\infty$. Let $K_\infty$ be the maximal compact subgroup of $\cG(\R)$ given by 
\[
\left\{g\in \cG(\R)~\left| {}^tg
\begin{pmatrix}
1 & & \\
 & A_0 & \\
& & 1
\end{pmatrix}g=\begin{pmatrix}
1 & & \\
 & A_0 & \\
& & 1
\end{pmatrix}\right.\right\}.
\]
With $A_{\infty}\coloneqq\left\{\left.a_y=
\begin{pmatrix}
y & & \\
 & 1_4 &\\
& & y^{-1}
\end{pmatrix}~\right|~y\in\R^+\right\}$ the Iwasawa decomposition $\cG(\R) = \cN(\R) A_{\infty} K_\infty$ gives us the $5$-dimensional hyperbolic space $\H_5$ as follows.
	$$\R^4 \times \R^+ \ni (x, y) \mapsto n(x)a_y \in \cG(\R)/K_\infty.$$
\begin{definition}\label{5dim-modforms-defn}
For $r\in\C$ we denote by $\cM(\Gamma, r)$ the space of smooth functions $F$ on $\cG(\R)$ satisfying the following conditions: 
\begin{enumerate}
\item $\Omega\cdot F=\displaystyle\frac{1}{8}\left(r^2-4\right)F$, where $\Omega$ is the Casimir operator defined in \cite[(2.3)]{LNP},
\item for any $(\gamma,g,k)\in\Gamma \times \cG(\R) \times K_{\infty}$, we have $F(\gamma gk)=F(g)$,
\item $F$ is of moderate growth.
\end{enumerate}
{As usual} we say that $F \in \cM(\Gamma, r)$ is a cusp form if it vanishes at all the cusps of $\Gamma$. \end{definition}
From Proposition 2.3 of \cite{LNP}, we see that a cusp form $F$ in $\cM(\Gamma, r)$ has the Fourier expansion
\begin{equation}\label{Four-exp-defn}
F(n(x) a_y) = \sum\limits_{\beta \in L_0' \setminus \{0\}} A(\beta) y^2 K_{r}(4 \pi \sqrt{Q_{A_0}(\beta)} y) e({}^t\beta A_0 x),
\end{equation}
with the dual lattice $L_0'$ of $L_0$. 
Here, $Q_{A_0}$ is the quadratic form corresponding to $A_0$.

	\subsection{Quaternion algebras}\label{Quat-MaxOrder}
	We want to restrict to the case where the lattice $L_0$ from the previous section corresponds to maximal orders in division quaternion algebras. In this section, we will provide the relevant information about quaternion algebras, maximal orders and their duals. A good reference is the book \cite{V2021} by Jon Voight. Let $B$ be a definite division quaternion algebra over $\Q$, given by $\Q + \Q i + \Q j + \Q k$, with $i^2 = a, j^2 = b, ij = -ji = k$. 
	Let us denote the standard involution on $B$ by $\alpha \mapsto \bar\alpha$. Let the trace and norm be defined by ${\rm tr}(\alpha) = \alpha + \bar\alpha$ and ${\rm Nrd}(\alpha) = \alpha \bar\alpha$. Assume that $B$ has discriminant $d_B = N$. Hence, $N$ is a square-free integer with an odd number of prime factors.

	Let $\O$ be any maximal order in $B$. Let $A_0$ be the gram matrix of $\O$ with respect to some basis, so that $\O \simeq (\Z^4, A_0)$. Let $Q_{A_0}$ be the quadratic form given by $Q_{A_0}(x) = \frac 12 {}^txA_0x$ for $x \in \Z^4$, and $B_{A_0}$ be the corresponding bilinear form. Note that if $\alpha, \beta \in \O$ get mapped to $x, y \in \Z^4$, then ${\rm Nrd}(\alpha) = Q_{A_0}(x)$ and ${\rm tr}(\alpha \bar\beta) = B_{A_0}(x,y)$. Let 
	$$L = \{\begin{bmatrix}a\\\alpha\\b\end{bmatrix} : a, b \in \Z, \alpha \in \O\}.$$
	Then $L \simeq (\Z^6, A)$, with $A = \begin{bmatrix}&&1\\&-A_0\\1\end{bmatrix}$. Then $Q_{A}(a, x, b) = ab - Q_{A_0}(x)$. Hence, the signature of $L$ is $(1, 5)$. The bilinear form $B_A$ on $L$ is given by
	$$B_A(x,x) = 2Q_A(x), \text{ and } B_A(x, y) = \frac 12(B_A(x+y, x+y) - B_A(x,x) - B_A(y,y)) = {}^txAy \text{ for } x, y \in L.$$ 
	We will be considering the orthogonal groups $\cG$ and $\cH$ with respect to the above matrices $A$ and $A_0$. 
	
	Define the dual of $\O$ by
	$$\O' \coloneqq \{\alpha \in B(\Q) : {\rm tr}(\alpha \O) \subset \Z\}.$$
	Let us collect some facts about $\O$ and $\O'$. 
	\begin{enumerate}
		\item Since $\O$ is maximal, we can see that $\O = \{ \alpha \in \O' : {\rm Nrd}(\alpha) \in \Z\}$.
		
		\item Let the discriminant ${\rm disc}(\O)$ be as in \cite[(15.1.2)]{V2021}. We have ${\rm disc}(\O) = N^2$, since $\O$ is a maximal order \cite[Theorem 15.5.5]{V2021}. We also have \cite[Lemma 15.6.7]{V2021}
		$${\rm disc}(\O) = [\O' : \O] = N^2.$$
		
		\item Define 
		$$(\O')^{-1} \coloneqq \{ \alpha \in B(\Q) : \O' \alpha \O' \subset \O'\}.$$
		By \cite[Proposition 16.5.8]{V2021}, we have $(\O')^{-1} \O' = \O$. Further, we also have \cite[Equation 16.8.4]{V2021}
		$${\rm Nrd}((\O')^{-1}) = \text{ ideal generated by } {\rm Nrd}(\alpha) \text{ for all } \alpha \in (\O')^{-1} = N\Z.$$
		This gives us
		\begin{equation*}\label{O'-norm-cond}
			{\rm Nrd}(\O') = \frac 1N \Z.
		\end{equation*}
		
		\item For a prime number $p$, let $\O_p = \O \otimes \Z_p$ and $\O'_p = \O' \otimes \Z_p$. It is known that $\O_p$ is a maximal order in $B_p = B \otimes \Q_p$. 
%
		For $p \nmid N$, $B_p$ is isomorphic to $M_2(\Q_p)$. Up to conjugation, there is a unique maximal order in $B_p$ given by $M_2(\Z_p)$, which is its own dual.
		
		\item For $ p | N$,  $B_p$ is a division algebra. From \cite[Theorem 5.13]{Shi}, we have the following information on the local maximal order and its dual.
		\begin{itemize}
			\item We have a unique maximal order $\O_p$ in $B_p$ given by $\{ \alpha \in B_p : {\rm Nrd}(\alpha) \in \Z_p\}$. 
			\item Let $\P \coloneqq \{\alpha \in B_p : {\rm Nrd}(\alpha) \in p\Z_p\}$. Then we have
			$$\P^m = \{\alpha \in B_p : {\rm Nrd}(\alpha) \in p^m \Z_p\} \text{ for } m \in \Z, \quad p \O_p = \P^2, \text{ and } \O_p' = \P^{-1}.$$
			
			\item Let $K_p \subset B_p$ be the unique unramified extension of $\Q_p$. We have
			$$K_p = \begin{cases}\Q_2(\sqrt{5}) & \text{ if } p = 2;\\
				\Q_p(\sqrt{-1}) & \text{ if } p \equiv 3, 7 \pmod{8};\\
				\Q_p(\sqrt{2}) & \text{ if } p \equiv 5 \pmod{8};\\
				\Q_p(\sqrt{q}) & \text{ if } p \equiv 1 \pmod{8} \text{ and prime } q \equiv 3 \pmod{4}, \Big(\frac pq\Big) = -1.\end{cases}$$ 
			Let $\O_{K_p}$ be the ring of integers of $K_p$. Then there exists $w_p \in B_p$ such that $w_p^2 = p$ and $B_p = K_p + w_p K_p$, $ \O_p = \O_{K_p} + w_p \O_{K_p}$ and $\P = w_p \O_p$. Hence, $\O_p' = w_p^{-1}\O_p = \O_{K_p} + w_p^{-1} \O_{K_p}$.
			
			\item We have 
			$$\O_p'/\O_p \simeq w_p^{-1} \O_{K_p}/\O_{K_p} \simeq \langle w_p^{-1} \rangle \times \langle uw_p^{-1} \rangle \simeq \Z_p \times \Z_p,$$
			where $$u = \begin{cases}\sqrt{5} & \text{ if } p = 2;\\
				\sqrt{-1} & \text{ if } p \equiv 3 \pmod{4};\\
				\sqrt{2} & \text{ if } p \equiv 5 \pmod{8};\\
				\sqrt{q} & \text{ if } p \equiv 1 \pmod{8}. \end{cases}$$
			
%
		\end{itemize}
		
	\end{enumerate}

	\subsection{Accidental isomorphisms}\label{accidental-isom}

	For a quaternion algebra $E$ over $\Q$ with the reduced norm $N_E$ we view $(E,N_E)$ as a rank 4 quadratic space over $\Q$. 
This gives rise to the rank 6 quadratic space $(E,N_E)\oplus\Hl$ with the hyperbolic space $\Hl$. For the subsequent argument we will need the two well-known accidental isomorphisms
\begin{align*}
E^{\times}\times E^{\times}/\{(z,z)\mid z\in \GL_1\} &\simeq \GSO(E,N_E),\\
\GL_2(E)\times \GL_1/\{(z\cdot 1_4,z^{-2})\mid z\in \GL_1\}&\simeq \GSO(V_E)
\end{align*}
as $\Q$-algebraic groups~(cf.~\cite[Section 3]{GT}).

Let $E\coloneqq M_2$ be the matrix algebra of degree two over $\Q$. The group on the right hand side of the first isomorphism is the similitude group defined by the determinant form of $M_2$. We denote this by $\GSO(2,2)$ in view of the signature of the quadratic space at the archimedean place. 
The isomorphism is induced by
\[
\GL_2\times \GL_2\ni(h_1,h_2)\mapsto M_2\ni X\mapsto h_1Xh_2^{-1}\in M_2.
\]
Let $\iota$ be the main involution of $M_2$. This induces the outer automorphism 
\[
\GL_2\times \GL_2\ni(h_1,h_2)\mapsto(\iota(h_1)^{-1},\iota(h_2)^{-1})\in \GL_2\times \GL_2.
\]
We denote this by $t$. 
With this $t$ we have an isomorphism
\[
\GO(2,2)\simeq \GSO(2,2)\rtimes\langle t\rangle.
\]
Regarding the second isomorphism the similitude group on the right hand side is defined by the quadratic form $ab-N_E(X)$ defined on the $\Q$-vector space 
	\[
	V\coloneqq\left\{
	\begin{pmatrix}
		a & x\\
		\iota(x) & b
	\end{pmatrix}\mid a,~b\in\Q,~x\in M_2\right\}.
	\]
Since the signature of this quadratic space is $(3+,3-)$ this group can be denoted by $\GSO(3,3)$. The isomorphism is given by
\[
\GL_4\times \GL_1\ni(g,z)\mapsto V\ni X\mapsto z\cdot gX{}^t\iota(g)\in V,
\]
where we put $\iota(g)\coloneqq
\begin{pmatrix}
\iota(x) & \iota(y)\\
\iota(z) & \iota(w)
\end{pmatrix}$ for $g=
\begin{pmatrix}
x & y\\
z & w
\end{pmatrix}$ with $x,y,z,w\in M_2$. 

Of course, we are interested in the case of $E=B$. For this case the similitude groups can be denoted by $\GSO(4)$ and $\GSO(1,5)$ for the first and second isomorphisms respectively.

	\section{Vector valued modular form}\label{vvmf-sec}
	In this section, we will start with a weight $0$ Maass form for $\Gamma_0(N)$ and construct a weight $(0, 0)$ vector-valued modular form for the Weil representation of $\SL_2(\Z)$ on a group algebra on a discriminant form. The main reference for this section is \cite{SV}.
	
	\subsection{The discriminant form}\label{disc-form-sec}
	As in the previous section, let $B$ be a definite quaternion algebra over $\Q$ with discriminant $d_B = N$, a square-free integer. Let $\O$ be any maximal order of $B$ with $\O \simeq (\Z^4, A_0)$. Let $Q_{A_0}, L$ and $A$ be as in Section \ref{Quat-MaxOrder}.	
	Let $\O'$ and $L'$ be the dual of $\O$ and $L$ respectively with respect to bilinear forms $B_{A_0}$ and $B_A$. We have described the dual $\O'$ in the previous section. We have
	$$L' = \{\begin{bmatrix}a\\\alpha\\b\end{bmatrix} : a, b \in \Z, \alpha \in \O'\}.$$
	
	Define the discriminant form $D$ by $D = L'/L$. From the description of $L'$ above, we have $D = L'/L = \O'/\O$. $D$ inherits the quadratic form $Q_D$ and bilinear form $B_D$ (with values in $\Q/\Z$) from those of $\O'$ considered modulo $1$. The level of $D$ is the smallest positive integer $n$ such that $nQ_D(\mu) \equiv 0 \pmod{1}$ for all $\mu \in D$. Since ${\rm Nrd}(\O') = \frac 1N\Z$, we see that the level of $D$ is $N$.
	
	Every discriminant form is an orthogonal direct sum of basic discriminant forms, which are described in Section 3 of \cite{SV}. The basic discriminant forms all correspond to the prime divisors of $N$. Let us write $D = \oplus_{p | N} D_p$, where by Section \ref{Quat-MaxOrder}, we have 
	$$D_p = \langle w_p^{-1} \rangle \times \langle uw_p^{-1} \rangle.$$
	We have $Q_D(w_p^{-1}) = -1/p$ and $Q_D(uw_p^{-1}) = u^2/p$. When $p = 2$, we see that $Q_D(w_p^{-1}) = Q_D(uw_p^{-1}) = B_D(w_p^{-1}, uw_p^{-1}) = 1/2$. Hence, in the notation of Section 3 of \cite{SV}, we have $D_2 = 2_{\rm II}^{-2}$. 
	
	Next, suppose $p$ is an odd prime. Since $Q_D(w_p^{-1}) = -1/p$, the basic discriminant form corresponding to $\langle w_p^{-1} \rangle$ is $p^{\epsilon}$, where $\epsilon = \Big(\frac{-2}p\Big)$. On the other hand
	$$Q_D(uw_p^{-1}) = \begin{cases} -1/p & \text{ if } p \equiv 3 \pmod{4}; \\ 2/p & \text{ if } p \equiv 5 \pmod{8}; \\ q/p & \text{ if } p \equiv 1 \pmod{8}.\end{cases}$$
	If $Q_D(uw_p^{-1}) = a/p$, then $\langle uw_p^{-1} \rangle$ corresponds to the discriminant form $p^{\epsilon'}$, where $\epsilon' = \Big(\frac{2a}p\Big)$. Hence, by Section 3 of \cite{SV}, we have
	\begin{equation}\label{D-p-desc}
		D_p = \begin{cases} p^{+1} \times p^{+1} = p^{+2} & \text{ if } p \equiv 3 \pmod{8};\\
			p^{-1} \times p^{-1} = p^{+2} & \text{ if } p \equiv 7 \pmod{8};\\
			p^{+1} \times p^{-1} = p^{-2}& \text{ if } p \equiv 1, 5 \pmod{8}.\end{cases}
	\end{equation}
	We have the following relevant information about $D$.
	\begin{enumerate}
		\item The level of $D$ is $N$ and $|D| = N^2$.
		\item The signature of $D$ is ${\rm sgn}(D) = 1-5 \pmod{8} = 4$.
		\item $D = \oplus_{p | N} D_p$, where $D_p = \{\mu \in D : p\mu = 0\}$. 
		\item The oddity of $D$ is $4$ if $N$ is even, and is $0$ if $N$ is odd.
	\end{enumerate}

	\subsection{Weil representation}
	The group algebra $\C[D]$ is a $\C$-vector space generated by the formal basis vectors $\{e_\mu : \mu \in D\}$ with product defined by $e_\mu e_{\mu'} = e_{\mu+\mu'}$. The inner product on $\C[D]$ (anti-linear in the second argument) is defined by $\langle e_\mu, e_{\mu'} \rangle = \delta_{\mu, \mu'}$. 
Hereafter we will often use the notation 
\[
e(x)\coloneqq\exp(2\pi\sqrt{-1}x)
\]
for $x\in\R$. We will now define a representation $\rho_D$ of $\SL_2(\Z)$ on $\C[D]$ by specifying it on the generators of $\SL_2(\Z)$ given by $T = \mat{1}{1}{}{1}$ and $S = \mat{}{-1}{1}{}$. 
	\begin{align*}
		\rho_D(T) e_\mu &= e(Q_D(\mu))e_\mu, \nonumber\\
		\rho_D(S) e_\mu &= \frac{e(-{\rm sgn}(D)/8)}{\sqrt{|D|}} \sum\limits_{\mu' \in D} e(-B_D(\mu, \mu'))e_{\mu'} = -\frac 1N \sum\limits_{\mu' \in D} e(-B_D(\mu, \mu'))e_{\mu'}.  \label{TS-action}
	\end{align*}
	This action extends to a unitary representation $\rho_D$ of $\SL_2(\Z)$ on $\C[D]$ called the Weil representation of $D$. The restriction of $\rho_D$ to the congruence subgroup $\Gamma_0(N)$ is given in the next lemma.
		
	\begin{lemma}\label{Gammap-action}
		Let $M = \mat{a}{b}{c}{d} \in \Gamma_0(N)$ and $\mu \in D$. Then
		$$\rho_D(M) e_\mu = e(bd Q_D(\mu)) e_{d\mu}.$$
		In particular, we have
		$\rho_D(M) e_0 = e_0$ for all $M \in \Gamma_0(N)$.
	\end{lemma}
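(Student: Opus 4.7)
The plan is to show that the candidate formula $U(M)e_\mu \coloneqq e(bdQ_D(\mu))e_{d\mu}$ defines a group homomorphism $\Gamma_0(N) \to \GL(\C[D])$ that coincides with $\rho_D|_{\Gamma_0(N)}$. Two elementary facts about the discriminant form $D = \O'/\O$ are used throughout: first, $N\mu = 0$ for every $\mu \in D$, which follows from the local description $\O'_p = w_p^{-1}\O_p$ combined with $w_p^2 = p$ (so $p\O'_p \subset \O_p$, hence $pD_p = 0$); second, $NQ_D(\mu) \in \Z$, the level condition on $D$.

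With these in hand, $U(M)$ is well-defined, and the homomorphism property $U(M_1M_2) = U(M_1)U(M_2)$ follows from a direct expansion. The $b$- and $d$-entries of $M_1M_2$ are $a_1b_2+b_1d_2$ and $c_1b_2+d_1d_2$ respectively, so the basis index matches because $c_1\mu = 0$ in $D$ gives $(c_1b_2+d_1d_2)\mu = d_1d_2\mu$. The phase discrepancy between the two sides evaluates to $c_1b_2(a_1b_2+2b_1d_2)Q_D(\mu)$ (after applying $a_1d_1-b_1c_1=1$), which lies in $\Z$ since $c_1Q_D(\mu) \in \Z$. Thus $U$ is a representation of $\Gamma_0(N)$, and agreement with $\rho_D$ on $T$ is immediate.

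It remains to extend the agreement $U = \rho_D$ to all of $\Gamma_0(N)$. For this I would use that every $M \in \Gamma_0(N)$ decomposes in $\SL_2(\Z)$ as a word in $T$ and $S$ (e.g., via the continued fraction expansion of $a/c$) and compute $\rho_D(M)e_\mu$ by iterated application of the defining $T$- and $S$-formulas. The constraint $N|c$ forces every internal Gauss sum to degenerate: $T$-factors with exponents divisible by $N$ act trivially (since $NQ_D \in \Z$), while the iterated $S$-sums collapse to a single $\delta$-function by orthogonality of characters on $D$ (using non-degeneracy of $B_D$). An illustrative base case is $M_0 = \mat{1}{0}{N}{1} = ST^{-N}S^{-1}$: a direct computation gives $\rho_D(S)\rho_D(T^{-N})\rho_D(S)^{-1}e_\mu = e_\mu = U(M_0)e_\mu$.

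The main obstacle is executing this continued-fraction reduction cleanly for arbitrary $M \in \Gamma_0(N)$ without bookkeeping errors in the phases from repeated $S$-applications. A slicker alternative is to invoke a general formula for the Weil representation on $\SL_2(\Z)$ (as developed, for example, in Scheithauer-Van den Brink \cite{SV}) and specialize to $N|c$: the inner Gauss sum then collapses by the level condition directly to a constant multiple of $\delta_{\nu,d\mu}$, yielding $U(M)$ in one stroke, and the special case $\rho_D(M)e_0 = e_0$ for all $M \in \Gamma_0(N)$ drops out at once.
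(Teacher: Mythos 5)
Your proposal has the right intuition but contains a genuine gap, and it misses the point that the paper's proof actually turns on.

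The part you do carry out — that $U(M)e_\mu \coloneqq e(bdQ_D(\mu))e_{d\mu}$ is a well-defined homomorphism on $\Gamma_0(N)$ (using $N\mu=0$ and $NQ_D(\mu)\in\Z$) and agrees with $\rho_D$ on $T$ — is correct; your computation of the cocycle discrepancy $c_1b_2(a_1b_2+2b_1d_2)Q_D(\mu)\in\Z$ checks out. But $T$ does not generate $\Gamma_0(N)$, so this does not establish $U=\rho_D$. You acknowledge this, and propose to finish either by continued-fraction reduction of a general $M\in\Gamma_0(N)$ into a word in $S$ and $T$, or by invoking a general formula for $\rho_D$ on $\SL_2(\Z)$ and specializing to $N\mid c$. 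Neither is carried out; the first you explicitly flag as the main obstacle, and the second you sketch only as a collapse of Gauss sums ``up to a constant.''

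That unidentified constant is exactly where the content of the lemma sits, and this is what the paper addresses directly. The general formula for the Weil representation restricted to $\Gamma_0(N)$ (this is precisely equation (4.1) of the reference \cite{SV} — Schwagenscheidt–Volz, not Scheithauer–Van den Brink) reads
$\rho_D(M)e_\mu = \chi_D(a)\,e(bdQ_D(\mu))\,e_{d\mu}$
with a nontrivial character $\chi_D(a) = \bigl(\tfrac{a}{|D|}\bigr)e\bigl((a-1)\cdot\mathrm{oddity}(D)/8\bigr)$. The claimed formula (no character) holds here only because of two special features of the discriminant form $D=\O'/\O$ in this paper: $|D|=N^2$ is a perfect square, so the Jacobi symbol is $1$; and $\mathrm{oddity}(D)$ is $4$ when $N$ is even and $0$ when $N$ is odd, and in the even case $N\mid c$ forces $a$ odd (since $ad-bc=1$), so $(a-1)\cdot 4/8\in\Z$. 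Your proposal never identifies or disposes of this character. If you were to execute either of your sketched routes, you would arrive at the formula with $\chi_D(a)$ present, and you would still owe the argument that $\chi_D$ is trivial — which is the whole proof in the paper. So there is a concrete missing step, not merely bookkeeping.
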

	\begin{proof}
		From equation (4.1) of \cite{SV} we get, for $M = \mat{a}{b}{c}{d} \in \Gamma_0(N)$ and $\mu \in D$,
		$$\rho_D(M) e_\mu = \chi_D(a) e(bdQ_D(\mu))e_{d\mu}, \text{ where } \chi_D(a) = \Big(\frac a{|D|}\Big)e((a-1)\cdot {\rm oddity}(D)/8).$$  
		Note that $|D| = N^2$ and ${\rm oddity}(D) = 4$ if $N$ is even and $0$ if $N$ is odd. Hence, for all $D$, we have that $\chi_D$ is the trivial character. This gives the lemma.
	\end{proof}
	
	\subsection{Scalar to vector valued modular form}\label{scalar-to-vector-sec}
	To construct a vector valued modular form for $\SL_2(\Z)$ with values in $\C[D]$, one has to start with a scalar valued modular form of level divisible by the level of $D$ and nebentypus character $\chi_D$. In our case, the level of $D$ is $N$ and the character $\chi_D$ is trivial. Hence, let $f \in S(\Gamma_0(N), r)$ be a Maass cusp form of weight $0$ with respect to $\Gamma_0(N)$ with Laplace eigenvalue $(r^2+1)/4$. According to the Selberg conjecture on the minimal Laplace eigenvalue for Maass cusp forms, $r$ should be real~(cf.~\cite[Section 11.3 Conjecture]{IW}). The Fourier expansion of $f$ is given by
	$$f(u+iv) = \sum\limits_{n \neq 0} c(n) W_{0, \frac{\sqrt{-1}r}2}(4 \pi |n| v) e(nu).$$
	for $\HH \coloneqq \{ u + i v \in \C : v > 0\}$. 
	Define $\mathcal L_D(f) : \HH \rightarrow \C[D]$ by
	\begin{equation}\label{Vect-defn}
		\mathcal L_D(f) = \sum\limits_{M \in \Gamma_0(N) \backslash \SL_2(\Z)} f|M \rho_D(M)^{-1} e_0,
	\end{equation}
	where $(f|M)(\tau) = f(M \cdot \tau) \coloneqq f((a\tau+b)/(c\tau+d))$ for $M = \mat{a}{b}{c}{d} \in \SL_2(\R)$.
	\begin{proposition}\label{vector-prop}
		Let $f \in S(\Gamma_0(N), r)$. The function $\mathcal L_D(f)$ is well-defined and satisfies
		$$\mathcal L_D(f) | \gamma = \rho_D(\gamma) \mathcal L_D(f),$$
		for all $\gamma \in \SL_2(\Z)$.
	\end{proposition}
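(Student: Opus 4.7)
The plan is to treat the two assertions separately—well-definedness of the sum on cosets, and the transformation law—both of which will follow almost formally from Lemma \ref{Gammap-action} combined with the representation property of $\rho_D$. A useful preliminary remark is that $[\SL_2(\Z) : \Gamma_0(N)]$ is finite, so the sum in (\ref{Vect-defn}) has only finitely many terms and no convergence question arises.

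For well-definedness, I would take $M' = \gamma_0 M$ with $\gamma_0 \in \Gamma_0(N)$ and check that the summand is unchanged. Since $f$ is $\Gamma_0(N)$-invariant and the slash is taken in weight zero, $f|M' = (f|\gamma_0)|M = f|M$. On the $\C[D]$-side, Lemma \ref{Gammap-action} gives $\rho_D(\gamma_0) e_0 = e_0$, hence $\rho_D(\gamma_0)^{-1} e_0 = e_0$, and consequently
\[
\rho_D(M')^{-1} e_0 \;=\; \rho_D(M)^{-1}\rho_D(\gamma_0)^{-1} e_0 \;=\; \rho_D(M)^{-1} e_0.
\]
Thus the summand depends only on the coset $\Gamma_0(N)M$, and the expression in (\ref{Vect-defn}) makes sense.

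For the transformation law, I would fix $\gamma \in \SL_2(\Z)$ and apply the slash action termwise. Only the scalar factor $f|M$ depends on $\tau$, so
\[
\mathcal L_D(f)\big|\gamma \;=\; \sum_{M \in \Gamma_0(N) \backslash \SL_2(\Z)} f|(M\gamma)\cdot \rho_D(M)^{-1} e_0.
\]
Using that $\rho_D$ is a homomorphism, I would rewrite $\rho_D(M)^{-1} = \rho_D(\gamma)\,\rho_D(M\gamma)^{-1}$, pull the operator $\rho_D(\gamma)$ outside the sum, and reindex by $M' \coloneqq M\gamma$. Right multiplication by $\gamma$ permutes $\Gamma_0(N) \backslash \SL_2(\Z)$ bijectively, so the reindexed sum is again over all cosets and yields exactly $\rho_D(\gamma)\mathcal L_D(f)$.

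The argument has no real obstacle; it is pure bookkeeping. The only point to keep straight is the interplay between the right slash action on the $\tau$-variable and the left action of $\rho_D$ on $\C[D]$: the inverse appearing in $\rho_D(M)^{-1}$ is exactly what converts the right translation $M \mapsto M\gamma$ of the coset index into a left multiplication by $\rho_D(\gamma)$ on the coefficient.
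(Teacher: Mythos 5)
Your proof is correct and follows essentially the same route as the paper's: well-definedness from $\Gamma_0(N)$-invariance of $f$ together with Lemma \ref{Gammap-action} (which gives $\rho_D(\gamma_0)e_0 = e_0$), and the automorphy law by the change of variable $M \mapsto M\gamma$ on the finite coset sum, using that $\rho_D$ is a homomorphism. You have simply spelled out the bookkeeping that the paper compresses into two sentences.
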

	\begin{proof}
		The well-definedness of $\mathcal L_D(f)$ follows from the $\Gamma_0(N)$-invariance of $f$ and Lemma \ref{Gammap-action}. The automorphy condition follows from a simple change of variable.
	\end{proof}
	
	Let us remark here that if $H$ is an isotropic subgroup of $D$, then the $e_0$ term in the definition of $\mathcal L_D(f)$ can be replaced by a sum over $H$. In our case, the only isotropic subgroup of $D$ is the trivial one. 
	
	In the remainder of the section, we will obtain a formula for the Fourier expansion of $\mathcal L_D(f)$. From page 660 of \cite{Sc06}, we have
	\begin{equation}\label{Scheit-formula}
		\mathcal L_D(f)(\tau) = \sum\limits_{c | N} \sum\limits_{\mu \in D_{\frac Nc}} \xi_c \frac{\sqrt{|D_c|}}{\sqrt{|D|}} \frac Nc g_{\frac Nc, j_{\mu, \frac Nc}}(\tau) e_\mu.
	\end{equation}
	Let us explain the terms appearing in the formula above. 
	\begin{enumerate}
		\item For any integer $t$, set $D_t \coloneqq \{\mu \in D : t \mu = 0\}$. In our case, for every $t | N$, we have $D_t = \oplus_{p | t} D_p$. Hence, $|D_t| = t^2$ for $t | N$.
		\item We have
		$$\xi_c \coloneqq \Big(\frac{-c}{|D_{\frac Nc}|}\Big) \prod\limits_{p | \frac Nc} \gamma_p(D),$$
		with
		\begin{align*}
			\gamma_p(p^{\pm 2}) &= e(-p\text{-excess}(p^{\pm 2})/8) \text{ if } p \text{ is odd,} \\
			\gamma_2(2_{II}^{\pm 2}) &= e(\text{oddity}(2_{II}^{\pm 2})/8).
		\end{align*}
		We have $p\text{-excess}(p^{\pm 2}) = 2(p-1)+k \pmod{8}$ where $k = 4$ if the sign is $-$ and $k=0$ if the sign is $+$. By (\ref{D-p-desc}), we have $\gamma_p(D_p) = -1$ for all primes $p$. Hence 
		$$\xi_c = \prod\limits_{p | \frac Nc} (-1).$$
		\item Finally, let us describe the functions $g_{\frac{N}{c},j}$. 
For every $c | N$, choose $M_c = \mat{a}{b}{c}{d} \in \SL_2(\Z)$ such that $d \equiv 1 \pmod{c}$ and $d \equiv 0 \pmod{N/c}$. As in page 658 of \cite{Sc06}, we have, for $0 \leq j \leq N/c$,
		$$g_{\frac Nc, j}(\tau) = \frac 1{N/c} \sum\limits_{k \text{ mod } \frac Nc} e\big(\frac{-jk}{N/c}\big) \big(f|M_cT^k\big)(\tau).$$
		The integer $j_{\mu, N/c}$ is defined by $(j_{\mu, N/c})/(N/c) \equiv -Q_D(\mu) \pmod{1}$. 
	\end{enumerate}
	Putting all this together, we see that (\ref{Scheit-formula}) now gives us
	\begin{equation}\label{LD-formula}
		\mathcal L_D(f)(\tau) = \sum\limits_{c | N} \prod\limits_{p | \frac Nc} (-1) \frac 1{N/c} \sum\limits_{k \text{ mod } \frac Nc} \big(f|M_cT^k\big)(\tau) \sum\limits_{\mu \in D_{\frac Nc}} e(kQ_D(\mu)) e_\mu.
	\end{equation}
	To simplify this further, we will assume that $f$ is an eigenfunction of all the Atkin-Lehner operators. For every $c|N$, the Atkin-Lehner operator corresponds to the action on $f$ by the matrix $W_{\frac Nc} \in M_2(\Z)$ given by
	$$W_{\frac Nc} = \mat{\frac Ncx}{y}{Nw}{\frac Ncx} \text{ with } {\rm det}(W_{\frac Nc}) = \frac Nc.$$
	Note that $W_{\frac Nc}^2 \in Z(\Q) \Gamma_0(N)$ with $Z(\Q)\coloneqq\{z\cdot1_2\mid z\in\Q^{\times}\}$. Now set $\widehat{W}_c \coloneqq W_{\frac Nc} \mat{\frac cN}{}{}{1} \in \SL_2(\Z)$. 
	
	Since $N$ is square-free, the cusps of $\Gamma_0(N)$ are given by $1/c$ where $c$ runs over all divisors of $N$. The cusp $1/N$ corresponds to infinity. Given a matrix $M = \mat{a'}{b'}{c'}{d'} \in \SL_2(\Z)$, it is well known that $M \langle \infty \rangle$ contains the representative $1/c$, where $c = {\rm gcd}(c', N)$. Hence, we have $M_c \langle \infty \rangle = \widehat{W}_c \langle \infty \rangle$, which implies that there is a $\gamma_c \in \Gamma_0(N)$ such that $M_c = \gamma_c \widehat{W}_c$. 
	
	\begin{proposition}\label{Ld-Fourier-exp-prop}
		Let $f \in S(\Gamma_0(N), r)$ be a Maass cusp form of weight $0$ with respect to $\Gamma_0(N)$ with Laplace eigenvalue $(r^2+1)/4$. Assume that $f$ is an eigenfunction of the Atkin-Lehner operators and let $f|W_{\frac Nc} = \varepsilon_{\frac Nc} f$. Then, we have
		$$\mathcal L_D(f)(\tau) = \sum\limits_{c | N} \varepsilon_{\frac Nc} \prod\limits_{p | \frac Nc} (-1)  \sum\limits_{a \text{ mod } \frac Nc} \sum\limits_{\substack{n \neq 0 \\ n+a \equiv 0 \text{ mod }\frac Nc}} c(n) W_{0, \frac{\sqrt{-1}r}2}(4 \pi |n| v \frac cN) e(nu\frac cN) \sum\limits_{\substack{\mu \in D_{\frac Nc} \\ Q_D(\mu) \equiv \frac{ac}N \text{ mod }1}} e_\mu.$$
	\end{proposition}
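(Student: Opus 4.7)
The plan is to start from equation (\ref{LD-formula}), which already writes $\mathcal L_D(f)$ as a triple sum over $c\mid N$, $k\bmod N/c$ and $\mu\in D_{N/c}$. Three reductions will produce the formula in the statement: (i) replace the coset representative $M_c$ by the cleaner matrix $\widehat W_c$ using $\Gamma_0(N)$-invariance of $f$, (ii) extract the Atkin--Lehner eigenvalue $\varepsilon_{N/c}$ from $f|\widehat W_c$, and (iii) evaluate the inner sum over $k$ by orthogonality of additive characters.

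First I would observe that, since $M_c=\gamma_c \widehat W_c$ with $\gamma_c\in\Gamma_0(N)$ and $f$ is $\Gamma_0(N)$-invariant, one has $f|M_c=f|\widehat W_c$. Combined with $\widehat W_c=W_{N/c}\mat{c/N}{}{}{1}$, and remembering that the weight-zero slash is simple substitution, this gives
\[
(f|M_c T^k)(\tau) \;=\; f\!\left(W_{N/c}\cdot \tfrac{c(\tau+k)}{N}\right) \;=\; \varepsilon_{N/c}\, f\!\left(\tfrac{c(\tau+k)}{N}\right),
\]
where the last equality uses the hypothesis $f|W_{N/c}=\varepsilon_{N/c}f$. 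Substituting the Fourier expansion of $f$ with $\tau=u+iv$ yields
\[
(f|M_c T^k)(\tau) \;=\; \varepsilon_{N/c} \sum_{n\neq 0} c(n)\, W_{0,\frac{\sqrt{-1}r}{2}}\!\left(4\pi|n|v\tfrac{c}{N}\right) e\!\left(nu\tfrac{c}{N}\right) e\!\left(nk\tfrac{c}{N}\right).
\]

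Feeding this into (\ref{LD-formula}) and interchanging the summation orders, the only remaining $k$-dependence sits in the inner sum
\[
\frac{1}{N/c} \sum_{k \bmod N/c} e\!\left(k\bigl(\tfrac{nc}{N}+Q_D(\mu)\bigr)\right).
\]
Using the orthogonal decomposition $D_{N/c}=\bigoplus_{p\mid N/c} D_p$ from Section \ref{disc-form-sec}, together with the explicit generators $w_p^{-1}, uw_p^{-1}$ of each $D_p$ and the computed values $Q_D(w_p^{-1}),Q_D(uw_p^{-1})\in\tfrac{1}{p}\Z/\Z$, one checks that $Q_D(\mu)\in\tfrac{1}{N/c}\Z/\Z$ for every $\mu\in D_{N/c}$. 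Since $nc/N\in\tfrac{1}{N/c}\Z$ too, the exponent $\tfrac{nc}{N}+Q_D(\mu)$ lies in $\tfrac{1}{N/c}\Z$ modulo $\Z$, so by orthogonality the inner sum is $1$ exactly when $Q_D(\mu)\equiv -nc/N\pmod{1}$ and is $0$ otherwise.

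To match the form in the statement I would finally reparametrize: for each $n$ let $a\in\Z/(N/c)\Z$ be the unique residue with $n+a\equiv 0\pmod{N/c}$, so that $ac/N\equiv -nc/N\pmod 1$; the condition $Q_D(\mu)\equiv -nc/N\pmod 1$ then reads $Q_D(\mu)\equiv ac/N\pmod 1$, and the single sum over $n$ unfolds as the advertised double sum over $a\bmod N/c$ and over $n\neq 0$ with $n+a\equiv 0\pmod{N/c}$. The main thing to be careful about is really just the bookkeeping at two places, namely the Atkin--Lehner normalisation in step (ii) and the verification that $Q_D(\mu)$ has denominator dividing $N/c$ in step (iii); once these are in hand, the rest is a mechanical rearrangement of (\ref{LD-formula}).
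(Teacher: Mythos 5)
Your proposal is correct and follows essentially the same route as the paper: rewrite $f|M_cT^k$ via $M_c=\gamma_c\widehat W_c$ and the Atkin--Lehner eigenvalue, substitute the Fourier expansion of $f$, and collapse the $k$-sum by orthogonality using $\tfrac{N}{c}\,Q_D(D_{N/c})\subset\Z$. The only cosmetic difference is that the paper first groups $\mu$ by the value $ac/N$ of $Q_D(\mu)$ and then evaluates the $k$-sum, while you evaluate the $k$-sum first and then introduce $a$; this is a trivial reordering of the same bookkeeping.
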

	\begin{proof}
		Since $M_c = \gamma_c \widehat{W}_c$, with $\gamma_c \in \Gamma_0(N)$, we have 
		\begin{align*}
			\big(f|M_cT^k\big)(\tau) &= \big(f|\widehat{W}_cT^k\big)(\tau) = \big(f|W_{\frac Nc} \mat{\frac cN}{}{}{1}T^k\big)(\tau) \\
			&= \varepsilon_{\frac Nc} \big(f|\mat{\frac cN}{\frac{kc}N}{}{1}\big)(\tau) = \varepsilon_{\frac Nc} f(\frac{\tau c}N + \frac{kc}N) \\
			&= \varepsilon_{\frac Nc}  \sum\limits_{n \neq 0} c(n) W_{0, \frac{\sqrt{-1}r}2}(4 \pi |n| v \frac cN) e(nu\frac cN) e(nk\frac cN).
		\end{align*}
		Note that we have
		\begin{equation*}
			\sum\limits_{k \text{ mod } \frac Nc} \sum\limits_{\mu \in D_{\frac Nc}} e(nk\frac cN) e(kQ_D(\mu)) e_\mu = \sum\limits_{a \text{ mod } \frac Nc}  \sum\limits_{\substack{\mu \in D_{\frac Nc} \\ Q_D(\mu) = ac/N}} \sum\limits_{k \text{ mod } \frac Nc}  e(\frac{kc}N(n+a)) e_\mu.
		\end{equation*}
		Here, we have used that $\frac{N}{c} Q_D(D_{\frac Nc}) \subset \Z$.  We have
		$$\sum\limits_{k \text{ mod } \frac Nc}  e(\frac{kc}N(n+a)) = \begin{cases} \frac Nc & \text{ if } n + a \equiv 0 \pmod{\frac Nc}; \\
			0 & \text{ otherwise}.\end{cases}$$
		Substituting these in (\ref{LD-formula}) gives us the formula in the statement of the proposition.
	\end{proof}
	
	We want to rewrite the formula for $\mathcal L_D(f)(\tau)$ in Proposition \ref{Ld-Fourier-exp-prop} in the form $\sum_{\mu \in D} f_\mu(\tau) e_\mu$. For this, let us first associate to every $\mu \in D$ an integer $q_\mu | N$ as follows. Since $N Q_D(\mu) \in \Z$, write $Q_D(\mu) = b/N = a/q_\mu$, where ${\rm gcd}(a, q_\mu) = 1$. Observe that $\mu \in D_{\frac Nc}$ for every $c$ satisfying $q_\mu | \frac Nc | N$. Hence, we have
	\begin{equation}\label{LD-intermsofD}
		\mathcal L_D(f)(\tau) = \sum\limits_{\mu \in D} \Big(\sum\limits_{c | \frac{N}{q_\mu}} \varepsilon_{\frac Nc} \prod\limits_{p | \frac Nc} (-1) \sum\limits_{\substack{n \neq 0 \\ \frac{nc}N \equiv -Q_D(\mu) \text{ mod } 1}} c(n) W_{0, \frac{\sqrt{-1}r}2}(4 \pi |n| v \frac cN) e(nu\frac cN)\Big)e_\mu.
	\end{equation}
	Observe that the coefficient $f_\mu(\tau)$ of $e_\mu$ above depends only on $Q_D(\mu)$. Hence, for any $c \in \cG(\Q)$, we have 
	\begin{equation}\label{cD-Fourier-exp}
		\mathcal L_{cD}(f) = \sum\limits_{\mu \in cD} f_\mu'(\tau) e_\mu \text{ with } f_\mu' = f_{c^{-1}\mu}.
	\end{equation}
	 
	\section{Theta lifts}\label{theta-lift}
	In this section, we will construct the theta lift of $f \in S(\Gamma_0(N), r)$, $N$ square-free, to an automorphic form on $5$-dimensional hyperbolic space as in \cite{Bo}. Also see \cite{LNP}. 
	\subsection{Real hyperbolic space as a Grassmanian manifold}\label{theta-lift-sec}
	We will follow the construction of the theta lift in Section 3 of \cite{LNP}. We recall from Section \ref{Orthogonal-gp} that if $g \in \cG(\R)$, then we can write
	$$g = n(x)a_yk, \text{ where } n(x) = \begin{bmatrix}1& {}^txA_0&\frac 12{}^txA_0x\\&1_4&x\\&&1\end{bmatrix}, x \in \R^4, a_y = \begin{bmatrix}y\\&1_4\\&&y^{-1}\end{bmatrix}, y \in \R^+, k \in K_{\infty}$$
	where $K_{\infty}$ is the maximal compact subgroup of $\cG(\R)$ and that
	$$\R^4 \times \R^+ \ni (x, y) \mapsto n(x)a_y \in \cG(\R)/K_{\infty}$$
	gives the $5$-dimensional hyperbolic space $\H_5$. Let $V_5 \coloneqq (\R^6, Q_A)$ and let $\mathcal D$ be the Grassmanian  of positive oriented lines in the quadratic space $V_5$. Note that $V_5 = L \otimes \R$, where $L$ was the lattice defined in Section \ref{Quat-MaxOrder}. We will identify $\H_5$ with a connected component of $\mathcal D$ as follows.
	\begin{equation*}\label{eq:nu}
		\H_5 \ni (x, y) \mapsto \nu(x,y)\coloneqq\frac{1}{\sqrt{2}}{}^t(y+y^{-1}Q_{A_0}(x) ,- y^{-1}x,y^{-1}) \in V_5
	\end{equation*}
	satisfying $B_A(\nu(x,y), \nu(x,y))=1$. It generates the positive, oriented line $\R\cdot\nu(x,y)$, which is an element in ${\cal D}$. In fact, we see that $\mathcal{D}^+ \coloneqq \{\R \cdot \nu(x, y) \mid (x, y)\in \H_5\}$ is one of the two connected components of $\mathcal{D}$. 
	We now note that the quadratic space $V_5$ is isometric to $\R^{1,5}$, where $\R^{1,5}$ denotes the real vector space $\R^{6}$ with the quadratic form
	\[
	Q_{1,5}(x_1,x_2,\cdots,x_{6})\coloneqq 
	\frac{1}{2}\left(x_1^2-\sum_{j=2}^6x_{j}^2\right).
	\]
	We slightly abuse the notation by using $\nu$ to represent the line generated by $\nu(x,y)$. Every line  $\nu\in{\cal D}^+$ induces an isometry 
	\begin{align*}
		\iota_{\nu}:{V_5} & \to {\R \cdot \nu \oplus(\nu^{\perp},{Q_{A_0}|_{{\nu}^{\perp}}})\simeq \R^{1,5}}\\
		\lambda&\mapsto (\iota^+_\nu(\lambda), \iota^-_\nu(\lambda)),
	\end{align*}
	where 
	$$
	\iota_\nu^+( \lambda)\coloneqq B_A(\lambda,\nu)\nu,~
	\iota_\nu^-( \lambda)\coloneqq \lambda -  \iota_\nu^+( \lambda) \in \nu^\perp
	$$ 
	are the components of $\lambda$. Let us remark here that, if we fix $(x, y) \in \H_5$, then we get a corresponding isometry of $V_5$ into $\R^{1,5}$ where the one dimensional positive definite subspace is the line generated by $\nu(x, y)$.

	Note that $\iota^+_{\gamma \cdot \nu}(\gamma \cdot \lambda) = \gamma \cdot \iota^+_\nu( \lambda)$ for any $\gamma \in \cG(\R)$ and $\lambda \in V_5$. Next, we collect some facts about certain distinguished elements of $L$ and their properties. These will be useful later in the Fourier expansion of the theta lift. 
	\begin{lemma}\label{z-props-lemma}
		Let $z \coloneqq {}^t(1, 0_4, 0)$ and $z' \coloneqq {}^t(0, 0_4, 1)$ be two elements of $L$. 
		\begin{enumerate}
			\item We have $B_A(z, z) = B_A(z',z') = 0$ and $B_A(z, z') = 1$. 
			
			\item Let $z = (z_{\nu^+}, z_{\nu^-})$ where $z_{\nu^+} = \iota_\nu^+(z)$ and $z_{\nu^-} =  \iota_\nu^-(z)$. Then
			$$z_{\nu^+} = B_A(z, \nu) \nu = \frac 1{\sqrt{2}y} \nu, \quad z_{\nu^-} = z - z_{\nu^+}, \quad B_A(z_{\nu^+}, z_{\nu^+}) = \frac 1{2y^2}, \quad B_A(z_{\nu^-}, z_{\nu^-}) = \frac{-1}{2y^2}.$$
			
			\item We have
			$$\mu_0 \coloneqq -z' + \frac{z_{\nu^+}}{2B_A(z_{\nu^+}, z_{\nu^+})} + \frac{z_{\nu^-}}{2B_A(z_{\nu^-}, z_{\nu^-})} = -z' + y^2(2z_{\nu^+}-z).$$
			
			\item Let $\lambda \in \O'$ and consider it as an element of $L'$. Then
			$$B_A(\lambda, \mu_0) = {}^t\lambda A \mu_0.$$
		\end{enumerate} 
	\end{lemma}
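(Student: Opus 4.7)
The plan is to prove all four parts by direct computation using the block structure of $A = \begin{pmatrix} 0 & 0 & 1 \\ 0 & -A_0 & 0 \\ 1 & 0 & 0 \end{pmatrix}$. The only real bookkeeping task is tracking what $\nu(x,y)$ looks like and how $A$ acts on vectors of the form ${}^t(a, v, b)$; specifically, $A \cdot {}^t(a, v, b) = {}^t(b, -A_0 v, a)$. Everything below then falls out of this identity.

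For part (i), I would compute $Az = {}^t(0, 0_4, 1) = z'$ and $Az' = {}^t(1, 0_4, 0) = z$, from which ${}^t z A z = {}^t z' A z' = 0$ and ${}^t z A z' = 1$ are immediate. For part (ii), applying the formula for $A$ to $\nu(x,y) = \tfrac{1}{\sqrt{2}} {}^t(y + y^{-1} Q_{A_0}(x), -y^{-1} x, y^{-1})$ yields $A\nu = \tfrac{1}{\sqrt{2}} {}^t(y^{-1}, y^{-1} A_0 x, y + y^{-1} Q_{A_0}(x))$, so ${}^t z A \nu = \tfrac{1}{\sqrt{2} y}$. Since $B_A(\nu,\nu) = 1$, this gives $z_{\nu^+} = \tfrac{1}{\sqrt{2} y} \nu$ and $B_A(z_{\nu^+}, z_{\nu^+}) = \tfrac{1}{2y^2}$. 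The identity $z_{\nu^-} = z - z_{\nu^+}$ is the definition, and the value of $B_A(z_{\nu^-}, z_{\nu^-})$ follows from the orthogonal decomposition $B_A(z,z) = B_A(z_{\nu^+}, z_{\nu^+}) + B_A(z_{\nu^-}, z_{\nu^-}) = 0$.

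Part (iii) is pure algebra: substituting $2 B_A(z_{\nu^+}, z_{\nu^+}) = y^{-2}$ and $2 B_A(z_{\nu^-}, z_{\nu^-}) = -y^{-2}$ into the stated definition of $\mu_0$ gives $\mu_0 = -z' + y^2 z_{\nu^+} - y^2 z_{\nu^-} = -z' + y^2(2 z_{\nu^+} - z)$ upon eliminating $z_{\nu^-}$ via part (ii).

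For part (iv), I would first combine the formula of part (iii) with $z_{\nu^+} = \tfrac{1}{\sqrt{2} y} \nu$ and the explicit form of $\nu(x,y)$ to collapse $\mu_0$ to the concrete vector $\mu_0 = {}^t(Q_{A_0}(x), -x, 0)$; the $y^2$ and $z'$ contributions cancel neatly. Then, with $\lambda \in \O'$ embedded in $L' \otimes \R$ as ${}^t(0, \alpha, 0)$, applying $A$ to $\mu_0$ gives $A\mu_0 = {}^t(0, A_0 x, Q_{A_0}(x))$, so ${}^t \lambda A \mu_0 = {}^t \alpha A_0 x$, and this same quantity is precisely what the bilinear form $B_A$ (extended by bilinearity to $V_5$) assigns to the pair $(\lambda, \mu_0)$.

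There is no real obstacle here; the statement is essentially a normalization lemma that prepares the explicit Fourier-expansion computations in the theta-lift section, and the only thing to be careful about is sign conventions in $A$ and the factor $\tfrac{1}{\sqrt{2}}$ in the definition of $\nu(x,y)$.
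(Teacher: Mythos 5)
Your proof is correct and follows the same direct-computation route the paper uses. The one place you go a bit further than the paper's one-line argument is part (iv): the paper simply invokes $B_A(\lambda, z) = B_A(\lambda, z') = 0$ to kill the $z$ and $z'$ contributions to $\mu_0$, whereas you first collapse $\mu_0$ to the explicit vector ${}^t(Q_{A_0}(x), -x, 0)$ (a correct computation) and then read off ${}^t\lambda A\mu_0 = {}^t\alpha A_0 x$; this is a slightly more explicit rendering of the same calculation and is harmless.
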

	\begin{proof}
		Part i) follows from the definition of $B_A$. For part ii) use $B_A(\nu, \nu) = 1$ and part i). Part iii) follows from part ii). For part iv), use $B_A(\lambda, z) = B_A(\lambda, z') = 0$.
	\end{proof}
	
	\subsection{The theta kernel}\label{thetakernel-section}
	Let $w^+$ be the orthogonal complement of $z_{\nu^+}$ in $\iota_\nu^+(V_5)$ and $w^-$ be the orthogonal complement of $z_{\nu^-}$ in $\iota_\nu^-(V_5)$. For $\lambda \in V_5$, let $\lambda_{w^+}$ and $\lambda_{w^-}$ be the projection of $\lambda$ to $w^+$ and $w^-$ respectively. We define the linear map $w : V_5 \rightarrow \R^{1,5}$ by $w(\lambda) = (\lambda_{w^+}, \lambda_{w^-})$, so that $w$ is an isomorphism from $w^+$ and $w^-$ to their images and $w$ vanishes on $z_{\nu^+}$ and $z_{\nu^-}$. For our special case, $w^+$ is trivial, the image of $w$ is $4$-dimensional, and the first coordinate of $w(\lambda)$ is $0$.
	
	If $p$ is a polynomial on $\R^{1,5}$, we say that $p$ has homogeneous degree $(m^+, m^-)$ if it is homogeneous of degree $m^+$ in the first variable and homogeneous of degree $m^-$ in the last $5$ variables. For $h^+, h^-$ integers satisfying $0 \leq h^+ \leq m^+$ and $0 \leq h^- \leq m^-$ define polynomials $p_{w, h^+, h^-}$ on $w(V_5)$ of homogeneous degree $(m^+-h^+, m^--h^-)$ by
	\begin{equation}\label{ph-defn}
		p(\iota_\nu(\lambda)) = \sum\limits_{h^+, h^-} B_A(\lambda, z_{\nu^+})^{h^+} B_A(\lambda, z_{\nu^-})^{h^-} p_{w, h^+, h^-}(w(\lambda)).
	\end{equation}

	Let $p : \R^6 \rightarrow \R$ be the polynomial given by $p(x_1, \cdots, x_6) = -2^{-2}x_1^2$. We get a polynomial on $V_5$ defined by $p \circ \iota_\nu$ given by the formula
	$$p(\iota_\nu(\lambda)) = -2^{-2} B_A(\lambda, \nu)^2 = -2^{-1} y^2 B_A(\lambda, z_{\nu^+})^2.$$
	By (\ref{ph-defn}), we have
	\begin{equation}\label{ph-formula}
		p_{w, h^+, h^-} = \begin{cases} -2^{-1}y^2 & \text{ if } (h^+, h^-) = (2, 0);\\
			0 & \text{ otherwise.}\end{cases}
	\end{equation}
	Note that the polynomial $p_{w, h^+, h^-}$ is a constant in this case.
	
	Let $\Delta$ be the Laplacian on $\R^{1,5}$. For $\tau \in \HH$, $(x,y) \in \H_5$ and $\mu \in D = L'/L$, define 
	\begin{align*}\label{classical-theta-defn}
		\theta^L_\mu(\tau, \nu(x,y), p) &\coloneqq \sum\limits_{\lambda \in L + \mu} \big(exp(\frac{-\Delta}{8 \pi v})(p)\big)(\iota_\nu(\lambda))exp(2 \pi \sqrt{-1}\Big(Q_A(\iota_{\nu}^+(\lambda)) \tau + Q_A(\iota_{\nu}^-(\lambda)) \bar{\tau}\Big)),\\
		\Theta_L(\tau, \nu(x,y), p) &\coloneqq \sum\limits_{\mu \in D} e_\mu \theta^L_\mu(\tau, \nu(x,y), p).\nonumber
	\end{align*}
	
	\begin{proposition}\label{theta-trans-prop}
		For $\mat{a}{b}{c}{d} \in \SL_2(\Z)$, we have
		$$\Theta_L(\frac{a\tau+b}{c\tau+d}, \nu(x,y), p) = |c\tau+d|^5 \rho_D(\mat{a}{b}{c}{d}) \Theta_L(\tau, \nu(x,y), p).$$
	\end{proposition}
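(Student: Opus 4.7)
The strategy is to verify the transformation law on the standard generators $T$ and $S$ of $\SL_2(\Z)$, which suffices because $\rho_D$ is defined via its values on $T$ and $S$. This is the specialization to our setting of Borcherds' general transformation formula for Siegel theta functions attached to a lattice of arbitrary signature twisted by a harmonic polynomial (Theorem 4.1 of \cite{Bo}). Accordingly, a clean route is simply to cite that theorem; below I indicate what would need to be checked if one wanted to reproduce the argument.

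For $T$, the verification is direct. Replacing $\tau$ by $\tau+1$ leaves $v = \Im\tau$ unchanged, so the operator $\exp(-\Delta/(8\pi v))(p)$ is untouched, while each exponential picks up a factor
\[
e\bigl(Q_A(\iota_\nu^+(\lambda))+Q_A(\iota_\nu^-(\lambda))\bigr)=e(Q_A(\lambda)).
\]
Because $A$ is even integral on $L$ and $\lambda\in L+\mu$, we have $Q_A(\lambda)\equiv Q_A(\mu)\equiv Q_D(\mu)\pmod 1$, which is constant on the coset. Hence $\theta^L_\mu(\tau+1,\nu(x,y),p)=e(Q_D(\mu))\theta^L_\mu(\tau,\nu(x,y),p)$, matching $\rho_D(T)e_\mu$.

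The transformation under $S$ is where the real work sits, and it proceeds by Poisson summation applied to the sum over $\lambda\in L+\mu$. Writing the theta series in the variables $(\iota_\nu^+(\lambda),\iota_\nu^-(\lambda))$ realizes each summand as a Schwartz function on $V_5=\R^{1,5}$ (a Gaussian times the polynomial $p$ on the positive definite line, together with a Gaussian on the orthogonal five-dimensional negative definite subspace, modified by the heat operator $\exp(-\Delta/(8\pi v))$). The Fourier transform of such a Gauss--polynomial product is computed in closed form: on the positive part one picks up a factor $(-i\tau)^{b^+/2+m^+}=(-i\tau)^{5/2}$ from signature plus polynomial degree $(b^+,m^+)=(1,2)$, and on the negative part a factor $(i\bar\tau)^{b^-/2+m^-}=(i\bar\tau)^{5/2}$ from $(b^-,m^-)=(5,0)$. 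The product is $|\tau|^5$ up to a fourth root of unity coming from the signature, and this root of unity is exactly $e(-\sgn(D)/8)$ which appears in $\rho_D(S)$. Poisson summation converts the sum over $L+\mu$ into a sum over $L'$, which we then reassemble cosetwise as $\sum_{\mu'\in D}e(-B_D(\mu,\mu'))e_{\mu'}$ divided by $\sqrt{|D|}=N$; this is precisely $\rho_D(S)e_\mu$.

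The main technical obstacle is tracking the automorphy factor under $S$, because one must account simultaneously for the signature $(1,5)$, the polynomial $p$ of bidegree $(2,0)$, and the heat-operator correction $\exp(-\Delta/(8\pi v))$. These ingredients conspire so that the resulting weight is $(5/2,5/2)$, giving the factor $|c\tau+d|^5$ in the statement; the constancy of $p_{w,h^+,h^-}$ visible in \eqref{ph-formula} means no extra polynomial dependence on $\tau$ survives. Once this weight computation is in hand, checking that the Weil multiplier produced by Poisson summation is the same $\rho_D(S)$ used to define the Weil representation is a standard Gauss sum manipulation using $\mathrm{sgn}(D)\equiv 4\pmod 8$ and $|D|=N^2$. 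With $T$ and $S$ both verified, the general case follows by the cocycle property, completing the proof.
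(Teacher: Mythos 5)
Your proof is correct and matches the paper's approach: both cite Theorem 4.1 of \cite{Bo} with the specialization $b^+ = 1$, $b^- = 5$, $m^+ = 2$, $m^- = 0$, and the $T$/$S$/Poisson-summation details you sketch are the standard content underlying that theorem. One small caveat: the polynomial $p(x) = -x_1^2/4$ is \emph{not} harmonic (which is precisely why the heat operator $\exp(-\Delta/(8\pi v))$ appears in the definition of $\theta^L_\mu$); Borcherds' Theorem 4.1 is stated for arbitrary homogeneous polynomials of bidegree $(m^+, m^-)$, not just harmonic ones, so the citation is still valid, but you should drop the word ``harmonic'' when describing it.
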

	\begin{proof}
		The transformation formula in the $\tau$ variable follows from Theorem 4.1 of \cite{Bo} by noticing that $b^+ = 1$, $b^- = 5$, $m^+ = 2$, and $m^- = 0$. 
	\end{proof}
	
	\subsection{The theta lift}\label{Thetalift}
	Let $f \in S(\Gamma_0(N), r)$, $N$ square-free, be an Atkin-Lehner eigenform with eigenvalues $\varepsilon_c$ for all $c|N$. Let $\mathcal L_D(f)$ be the $\C[D]$ valued modular form as defined in (\ref{Vect-defn}). Let $\Theta_L(\tau, \nu(x,y), p)$ be the theta function defined in the previous section. Define
	\begin{equation*}\label{theta-lift-defn}
		\Phi_L(\nu(x,y), p, f) \coloneqq \int\limits_{\SL_2(\Z) \backslash \HH} (\mathcal L_D(f))(\tau) \overline{\Theta_L(\tau, \nu(x,y), p)} v^{\frac 52} \frac{du dv}{v^2}.
	\end{equation*}
	Here, complex conjugation on $\C[D]$ is given by $\overline{e_\mu} \coloneqq e_{-\mu}$. In the product of $\Theta_L$ and $\mathcal L_D(f)$, we are taking the inner product in $\C[D]$ to get a $\C$-valued function. By Propositions \ref{vector-prop} and \ref{theta-trans-prop}, we see that the integrand is indeed invariant under $\SL_2(\Z)$. 
	
	\begin{lemma}\label{theta-inv-nu}
		Let $\gamma \in \Gamma = \{\gamma \in \cG(\Q) : \gamma L = L\}$. Then 
		$$\Phi_L(\gamma \nu(x,y), p, f) = \Phi_L(\nu(x,y), p, f).$$
	\end{lemma}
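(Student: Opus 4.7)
The plan is to exploit the fact that the factor $\mathcal L_D(f)(\tau)$ in the integrand is independent of $\nu(x,y)$, so that all of the $\nu$-dependence of $\Phi_L$ is concentrated in the theta kernel $\Theta_L(\tau,\nu,p)$. I will show that $\Theta_L(\tau,\gamma\nu,p)$ differs from $\Theta_L(\tau,\nu,p)$ only by permuting the $e_\mu$-components according to the induced action of $\gamma$ on $D = L'/L$, and then verify that the coefficients of $\mathcal L_D(f)$ are invariant under that permutation. Taking the inner product, these two permutations cancel and leave the integral unchanged.

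First, I would establish the identity
$$\theta^L_\mu(\tau,\gamma\nu(x,y),p) = \theta^L_{\gamma^{-1}\mu}(\tau,\nu(x,y),p)$$
for every $\mu \in D$. Since $\gamma L = L$ and $\gamma$ preserves $B_A$, one has $\gamma L' = L'$, so $\gamma$ descends to a bijection of $D$ with $\gamma(L+\sigma) = L + \gamma\sigma$. The substitution $\lambda = \gamma\sigma$ therefore reindexes the sum over $L+\mu$ as a sum over $L + \gamma^{-1}\mu$. On each term, the relation $B_A(\gamma\sigma,\gamma\nu) = B_A(\sigma,\nu)$ implies both that $p(\iota_{\gamma\nu}(\gamma\sigma)) = p(\iota_\nu(\sigma))$, because $p$ depends on its argument only through $B_A(\lambda,\nu)^2$, and that $Q_A(\iota_{\gamma\nu}^{\pm}(\gamma\sigma)) = Q_A(\iota_\nu^{\pm}(\sigma))$, since $\gamma$ is an isometry of $(V_5,Q_A)$. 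The Laplacian correction $\exp(-\Delta/(8\pi v))(p)$ does not affect the argument either, because for the specific $p$ in (\ref{ph-formula}) it differs from $p$ by a $\nu$-independent constant.

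Summing over $\mu$ and relabeling gives $\Theta_L(\tau,\gamma\nu,p) = \sum_\sigma e_{\gamma\sigma}\,\theta^L_\sigma(\tau,\nu,p)$. Plugging this into the definition of $\Phi_L$ and using $\langle e_\mu, e_{\mu'}\rangle = \delta_{\mu,\mu'}$, the integrand defining $\Phi_L(\gamma\nu,p,f)$ becomes
$$\sum_\sigma f_{\gamma\sigma}(\tau)\,\overline{\theta^L_\sigma(\tau,\nu,p)},$$
where $f_\mu$ denotes the $e_\mu$-coefficient of $\mathcal L_D(f)$. It therefore remains to verify that $f_{\gamma\sigma} = f_\sigma$. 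From the explicit expression in (\ref{LD-intermsofD}), $f_\mu$ depends on $\mu$ only through the invariant $q_\mu$, which in turn is determined by $Q_D(\mu) \bmod 1$. Since $\gamma$ preserves $Q_D$ on $D$, we have $q_{\gamma\sigma} = q_\sigma$ and hence $f_{\gamma\sigma} = f_\sigma$, which completes the argument.

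The main obstacle I expect is the careful bookkeeping in the first step, in particular ensuring that the polynomial and Gaussian factors of the theta kernel transform as claimed under the simultaneous substitutions $\nu \mapsto \gamma\nu$ and $\lambda \mapsto \gamma\lambda$. Everything afterwards is essentially formal, reducing to the fact that $\gamma$ acts as an isometry on the discriminant form $D$, which is automatic from $\gamma \in \Gamma$.
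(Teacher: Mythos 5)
Your proposal is correct and takes essentially the same route as the paper: a change of variable in the theta series gives $\theta^L_\mu(\tau,\gamma\nu,p)=\theta^L_{\gamma^{-1}\mu}(\tau,\nu,p)$, and since the $e_\mu$-coefficient of $\mathcal L_D(f)$ depends only on $Q_D(\mu)$, which is preserved by $\gamma$, the two permutations cancel upon taking the inner product and integrating. The only stylistic difference is that you explicitly address the $\exp(-\Delta/(8\pi v))$ correction, which the paper leaves implicit in the change of variable.
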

	\begin{proof}
		Using the definition of $L'$, it is easy to see that $\Gamma$ acts on $L'$ and hence on $D$  as well. Every element of $\Gamma$ fixes $0$ and, for $0 \neq \mu \in D$ with $\gamma \in \Gamma$, we have $Q_D(\mu) = Q_D(\gamma^{-1}\mu)$. Let us observe that $p(\iota_\nu(\lambda)) = -2^{-2} B_A(\lambda, \nu)^2$ and $B_A(\lambda, \gamma \nu) = B_A(\gamma^{-1} \lambda, \nu)$. Now, a change of variable gives us $\theta^L_\mu(\tau, \gamma \nu, p) = \theta^L_{\gamma^{-1} \mu}(\tau, \nu, p)$. Hence
		$$\Theta_L(\tau, \gamma \nu(x,y), p) = \sum\limits_{\mu \in D} e_\mu \theta^L_{\gamma^{-1} \mu}(\tau, \nu(x,y), p).$$
		From (\ref{LD-intermsofD}), we know that the $e_\mu$-component of $\mathcal L_D(f)$ depends only on $Q_D(\mu)$. As seen above, $Q_D(\mu) = Q_D(\gamma^{-1} \mu)$  for all $\mu \neq 0$ in $D$. Upon integration, we get the result.
	\end{proof}
	
	Let $z^\perp$ be the orthogonal complement of $z$ in $V_5$. By part i) of Lemma \ref{z-props-lemma}, we see that $z \in z^\perp$. Let $K \coloneqq (L \cap z^\perp)/\Z z$. By the definition of $B_A$, we can see that the lattice $K$ is isomorphic to $\O$. Given $\mathcal L_D(f)$ as above, we can define a $\C[K'/K]$-valued function $(\mathcal L_D(f))_K$ which is a modular form for $\SL_2(\Z)$ with respect to $\rho_{K'/K}$. In our case, since $K = \O$, we have $K'/K = D$ and hence $(\mathcal L_D(f))_K = \mathcal L_D(f)$. We want to show that the Fourier expansion of $\Phi_L(\nu(x,y), f)$ is of the form (\ref{Four-exp-defn}). By Theorem 7.1 of \cite{Bo}, the Fourier expansion of $\Phi_L(\nu(x,y), f)$ is given by
	\begin{align*}\label{Borcherds-formula}
		\Phi_L(\nu(x,y), f) & = \frac 1{2\sqrt{Q_A(z_{\nu^+})}} \sum\limits_{h \geq 0} h! \big(\frac{Q_A(z_{\nu^+})}{2 \pi}\big)^h \Phi_K(w, p_{w, h, h}, (\mathcal L_D(f))_K) \nonumber \\
		& \quad + \frac 1{\sqrt{Q_A(z_{\nu^+})}} \sum\limits_{h \geq 0} \sum\limits_{h^+, h^-} \frac{h! \big(\frac{-2Q_A(z_{\nu^+})}{\pi}\big)^h}{(2i)^{h^++h^-}} {{h^+}\choose{h}} {{h^-}\choose{h}} \sum\limits_{j} \sum\limits_{\lambda \in K'} \frac{(-\Delta)^j(\bar{p}_{w, h^+, h^-})(w(\lambda))}{(8 \pi)^j j!} \nonumber \\
		& \qquad \times \sum\limits_{n > 0} e(B_A(n \lambda, \mu_0)) n^{h^++h^--2h} \sum\limits_{\substack{\mu \in D \\ \mu |_{L \cap z^\perp}= \lambda}} e(nB_A(\mu, z')) \nonumber \\
		& \qquad  \times \int_{v > 0} c_{\mu, Q_A(\lambda)}(v) exp(-\frac{\pi n^2}{4v Q_A(z_{\nu^+})} - 2 \pi v (Q_A(\lambda_{w^+}) - Q_A(\lambda_{w^-})))v^{h-h^+-h^--j-\frac 52} dv.
	\end{align*}
	Here
	$$v^{\frac 52}(\mathcal L_D(f))(u+iv) = \sum\limits_{\mu \in D} e_\mu \sum\limits_{m \in \Q} c_{\mu, m}(v)e(mu).$$
	Also, $\mu_0$ is as defined in part iii) of Lemma \ref{z-props-lemma}. In addition, $\lambda_{w^+}$ and $\lambda_{w^-}$ are defined in the beginning of Section \ref{thetakernel-section}. Let us now apply this formula to our particular situation.
	\begin{enumerate}
		\item We have $K = \O$, hence $K' = \O'$.
		
		\item By (\ref{ph-formula}), we have 
		$$p_{w, h^+, h^-} = \begin{cases} -2^{-1}y^2 & \text{ if } (h^+, h^-) = (2, 0);\\
			0 & \text{ otherwise.}\end{cases}$$
		Hence, the first sum is zero. In the second sum over $h$, $ h^+$, and $h^-$, we only have the case $h=0$, $h^+=2$, and $h^-=0$. Since $p_{w, h^+, h^-}$ is a constant function, the sum over $j$ vanishes for all $j > 0$. Hence, we only get $j=0$.
		
		\item By Lemma \ref{z-props-lemma}, we have $Q_A(z_{\nu^+}) = (4y^2)^{-1}$ and $B_A(\lambda, \mu_0) = {}^t\lambda A \mu_0$.
		
		\item Since $D = \O'/\O$, we can see that, for $\mu \in D$, we have $B_A(\mu, z') \in \Z$. Hence, $e(nB_A(\mu, z')) = 1$ for all $\mu \in D$. Furthermore, for each $\lambda$, there is exactly one $\mu \in D$ such that $\mu |_{L \cap z^\perp}= \lambda$. 
		
		\item For $\lambda \in \O'$, we have $Q_A(\lambda) = -Q_{A_0}(\lambda)$.
		
		\item For $\lambda \in \O'$, we have $\lambda_{w^+} = 0$ since $w^+$ is the trivial space in our case. We have $Q_A(\lambda_{w^-}) = -Q_{A_0}(\lambda)$.
		
		\item By (\ref{LD-intermsofD}), for $\mu \in D$, $\lambda \in \O'$, and $v \in \R_{>0}$, we have 
		$$c_{\mu, Q_A(\lambda)}(v) = c_\mu(\lambda) W_{0, \frac{\sqrt{-1}r}2}(4 \pi Q_{A_0}(\lambda) v) v^{\frac 52},$$
		where
		\begin{equation}\label{cmulambda-formula}
			c_\mu(\lambda) = \begin{cases} 0 & \text{ if } \lambda \not\in \mu + \O;\\
				\sum\limits_{c | \frac{N}{q_\mu}} \prod\limits_{p | \frac Nc} (-\varepsilon_p) c(-Q_{A_0}(\lambda)\frac Nc)  & \text{ if } \lambda \in \mu + \O.\end{cases}
		\end{equation}
		
		Let $\mu_\lambda \in D$ be such that $\lambda \in \mu_\lambda + \O$. Then we see that $c_\mu(\lambda)$ is non-zero only if $\mu = \mu_\lambda$.
		
		
		\item We have
		\begin{align*}
			& \int\limits_{v > 0} v^{\frac 52}W_{0, \frac{\sqrt{-1}r}2}(4 \pi |Q_{A_0}(\lambda)| v) exp(-\frac{\pi n^2y^2}{v} - 2 \pi v Q_{A_0}(\lambda))v^{-2-\frac 52} dv \qquad (v \mapsto \frac 1v) \\
			= & \int\limits_{v > 0} W_{0, \frac{\sqrt{-1}r}2}(4 \pi \frac{|Q_{A_0}(\lambda)|}{v}) exp(-\pi n^2y^2 v - \frac{2 \pi Q_{A_0}(\lambda)}{v}) dv \\ 
			= & 4\frac{\sqrt{Q_{A_0}(\lambda)}}{ny} K_{\sqrt{-1}r}(4 \pi \sqrt{Q_{A_0}(\lambda)} ny).
		\end{align*}
		We have used
		\[
		\displaystyle\int_0^{\infty}\exp(-pt-\frac{a}{2t})W_{0,\frac{\sqrt{-1}r}{2}}(\frac at)dt=2\sqrt{\frac{a}{p}}K_{\sqrt{-1}r}(2\sqrt{ap})
		\]
		(cf.~\cite[4.22 (22)]{EMOT})~with $a=4\pi Q_{A_0}(\lambda)$ and $p=\pi n^2y^2$. 
		
	\end{enumerate}
	
	Putting this together, we see that the Fourier expansion is given by
	\begin{align*}
		& 2y (\frac{-1}4) \sum\limits_{\lambda \in \O'} \frac{-y^2}2 \sum\limits_{n > 0} e(n {}^t\lambda A_0 x) n^2 c_{\mu_\lambda}(\lambda) 4\frac{\sqrt{Q_{A_0}(\lambda)}}{ny} K_{\sqrt{-1}r}(4 \pi \sqrt{Q_{A_0}(\lambda)} ny) \\
		= & \sum\limits_{\lambda \in \O'} \sum\limits_{n > 0} n\sqrt{Q_{A_0}(\lambda)} c_{\mu_\lambda}(\lambda) y^2 K_{\sqrt{-1}r}(4 \pi \sqrt{Q_{A_0}(\lambda)} ny) e(n {}^t\lambda A_0 x)\\
		= & \sum\limits_{\beta \in \O'} \sqrt{Q_{A_0}(\beta)} \Big(\sum\limits_{\substack{d > 0 \\ \frac 1d \beta \in \O'}}c_{\mu_{\frac{\beta}{d}}}\big(\frac{\beta}d\big)\Big) y^2 K_{\sqrt{-1}r}(4 \pi \sqrt{Q_{A_0}(\beta)} y) e({}^t\beta A_0 x).
	\end{align*}
From Lemma \ref{theta-inv-nu} and the above Fourier expansion (compare to (\ref{Four-exp-defn})), we get 
\begin{theorem}\label{modularity-thm}
$\Phi_L(\nu(x,y), f)$ belongs to $\cM(\Gamma, \sqrt{-1}r)$.
\end{theorem}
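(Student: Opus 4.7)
The theorem amounts to verifying the three defining conditions of Definition \ref{5dim-modforms-defn} with spectral parameter $\sqrt{-1}r$, and the essential work has in fact already been carried out in the excerpt, so the proof assembles known facts.

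First I would dispatch the invariance and smoothness. The function $\Phi_L$ depends on $g\in\cG(\R)$ only through $\nu(x,y)$, i.e.\ through the class $gK_\infty$, so right $K_\infty$-invariance is automatic. The left $\Gamma$-invariance is exactly Lemma \ref{theta-inv-nu}, which was established just before the statement. Smoothness in $(x,y)$ follows from differentiation under the integral defining $\Phi_L$: on any compact set in $\H_5$ the theta kernel is dominated by a Gaussian in $\lambda\in L+\mu$, and $\mathcal L_D(f)$ is rapidly decreasing at the cusp of $\SL_2(\Z)\backslash\HH$ since $f$ is cuspidal, which together justify passage of derivatives through the integral.

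Next I would verify the Casimir eigenvalue equation. The input is the explicit Fourier expansion just derived, which can be written
\begin{equation*}
\Phi_L(\nu(x,y),f)=\sum_{\beta\in\O'} A(\beta)\, y^2\, K_{\sqrt{-1}r}\bigl(4\pi\sqrt{Q_{A_0}(\beta)}\,y\bigr)\, e({}^t\beta A_0 x),
\end{equation*}
with the $A(\beta)$ read off from the calculation at the end of Section \ref{Thetalift}. This is precisely the shape of (\ref{Four-exp-defn}) with $r$ replaced by $\sqrt{-1}r$. By Proposition 2.3 of \cite{LNP}, each Fourier mode $y^2 K_{s}(4\pi\sqrt{Q_{A_0}(\beta)}\,y)\,e({}^t\beta A_0 x)$ is an eigenfunction of $\Omega$ with eigenvalue $\tfrac{1}{8}(s^2-4)$; specializing to $s=\sqrt{-1}r$ gives the eigenvalue $\tfrac{1}{8}(-r^2-4)$ required by Definition \ref{5dim-modforms-defn}. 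Exponential decay of $K_{\sqrt{-1}r}$ in $y$ away from $0$ together with at most polynomial growth of $A(\beta)$ in $|\beta|$, inherited from the Hecke-type bounds on the Fourier coefficients $c(n)$ of $f$ via formula (\ref{cmulambda-formula}), ensures that the Fourier series and all its derivatives converge uniformly on compacta, legitimizing termwise application of $\Omega$.

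Finally, moderate growth is also immediate from the same Fourier expansion: on any Siegel domain in $\H_5$ the exponential decay of $K_{\sqrt{-1}r}(4\pi\sqrt{Q_{A_0}(\beta)}\,y)$ as $y\to\infty$ combined with polynomial growth of the $A(\beta)$ bounds $\Phi_L$ by a polynomial in $y$ and $\|x\|$. The only genuinely delicate point in the whole argument is this convergence/differentiation-under-the-sum step, which is however completely standard given the exponential decay of the $K$-Bessel function; the substantive content of the theorem — that $\Phi_L$ has the Whittaker-type Fourier expansion characteristic of $\cM(\Gamma,\sqrt{-1}r)$ — was already secured in the explicit computation preceding the statement.
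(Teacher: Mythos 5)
Your proof is correct and follows essentially the same route as the paper: the paper's own proof of Theorem \ref{modularity-thm} is a one-liner citing Lemma \ref{theta-inv-nu} for $\Gamma$-invariance and the explicitly computed Fourier expansion (matched against (\ref{Four-exp-defn})), and you have simply unpacked what that comparison delivers — $K_\infty$-invariance by construction, the Casimir eigenvalue from the Whittaker/$K$-Bessel Fourier modes, and moderate growth from Bessel decay. The added care about convergence and differentiation under the sum is exactly the implicit content of the paper's terse argument, so there is no substantive divergence.
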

We will use the remaining section to obtain a formula for the Fourier coefficients of $\Phi_L(\nu(x,y), f)$ in terms of the Fourier coefficients of $f$.	For $\beta \in \O'$, set
	\begin{equation}\label{Abeta-Formula}
		A(\beta) = \sqrt{Q_{A_0}(\beta)} \sum\limits_{\substack{d > 0 \\ \frac 1d \beta \in \O'}}c_{\mu_{\frac{\beta}{d}}}\big(\frac{\beta}d\big).
	\end{equation}
	We will now obtain a formula for $A(\beta)$ in terms of the Fourier coefficients $c(n)$ of $f$. Let us define the primitive elements of $\O'$ by
	$$\O'_{\rm prim} \coloneqq \{ \beta \in \O' : \frac 1n \beta \not\in \O' \text{ for all positive integers } n > 1\}.$$ 
	\begin{proposition}\label{Four-coeff-equal-prop}
		Write $\beta \in \O'$ as
		$$\beta = \prod\limits_{p | N}p^{u_p} n \beta_0, \qquad u_p \geq 0, n > 0, {\rm gcd}(n, N) = 1 \text{ and } \beta_0 \in \O'_{\rm prim}.$$
		Let $q_{\beta_0} = q_{\mu_{\beta_0}}$. For $p | N$, set 
		$$\delta_p = \begin{cases} 0 & \text{ if } p | q_{\beta_0};\\ 1 & \text{ if } p \nmid q_{\beta_0}.\end{cases}$$
		Then
		\begin{equation}\label{Four-coeff-equal-eqn}
			A(\beta) = \sqrt{Q_{A_0}(\beta)} \sum\limits_{p | N} \sum\limits_{t_p = 0}^{2u_p+\delta_p} \sum\limits_{d | n} c\big(\frac{-Q_{A_0}(\beta)}{\prod\limits_{p | N}p^{t_p-1}d^2}\big)\prod\limits_{p|N}(-\varepsilon_p)^{t_p-1}.
		\end{equation}
	\end{proposition}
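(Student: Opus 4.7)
The plan is to start from the defining sum (\ref{Abeta-Formula}) for $A(\beta)$, substitute the formula (\ref{cmulambda-formula}) for $c_\mu(\lambda)$, and reorganize via an explicit change of summation variables. First I would enumerate the divisors $d > 0$ with $\beta/d \in \O'$: since $\beta_0 \in \O'_{\rm prim}$ and $\gcd(n, N) = 1$, each such $d$ factors uniquely as $d = d' \prod_{p \mid N} p^{v_p}$ with $d' \mid n$ and $0 \le v_p \le u_p$. Substituting (\ref{cmulambda-formula}) and reindexing the inner sum by $s := N/c$ (so that $s$ is a divisor of $N$ which is a multiple of $q_{\mu_{\beta/d}}$) yields
\[ A(\beta) = \sqrt{Q_{A_0}(\beta)}\sum_{d',\,(v_p)} \sum_{\substack{s \mid N\\ q_{\mu_{\beta/d}}\mid s}} \prod_{p\mid s}(-\varepsilon_p)\, c\!\left(\tfrac{-Q_{A_0}(\beta)\, s}{d'^2 \prod_{p\mid N} p^{2v_p}}\right). \]

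The technical heart is the explicit identification of $q_{\mu_{\beta/d}}$. Writing $Q_{A_0}(\beta_0) = a'/N$, we have $q_{\beta_0} = N/\gcd(a',N)$, so for a prime $p \mid N$ the condition $p \mid q_{\beta_0}$ is equivalent to $p \nmid a'$. Using $Q_{A_0}(\beta/d) = (n/d')^2 \prod p^{2(u_p - v_p)} a'/N$ together with $\gcd(n/d', N) = 1$, a direct $p$-adic valuation computation shows that $p \mid q_{\mu_{\beta/d}}$ if and only if $p \mid q_{\beta_0}$ and $v_p = u_p$. Writing $s = \prod p^{e_p}$ with $e_p \in \{0,1\}$, the constraint $q_{\mu_{\beta/d}} \mid s$ becomes: $e_p = 1$ whenever $p \mid q_{\beta_0}$ and $v_p = u_p$; otherwise $e_p$ is unconstrained.

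Finally, I set $t_p := 2v_p - e_p + 1$ for each $p \mid N$. Since $t_p$ is odd iff $e_p = 0$, the map $(v_p, e_p) \mapsto t_p$ inverts as $e_p = 1 - (t_p \bmod 2)$ and $v_p = \lfloor t_p/2 \rfloor$, and a short case analysis shows this is a bijection from the admissible pairs onto $\{0, 1, \ldots, 2u_p + \delta_p\}$: the excluded pair $(u_p, 0)$ when $p \mid q_{\beta_0}$ is exactly the one that would contribute $t_p = 2u_p + 1$, matching $\delta_p = 0$; when $p \nmid q_{\beta_0}$ all pairs are allowed and $t_p$ attains the full range up to $2u_p + 1 = 2u_p + \delta_p$. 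The argument of $c(\cdot)$ rewrites as $-Q_{A_0}(\beta)/(d'^2 \prod p^{2v_p - e_p}) = -Q_{A_0}(\beta)/(d'^2 \prod p^{t_p - 1})$, and, since each $\varepsilon_p = \pm 1$, the identity $(-\varepsilon_p)^{t_p - 1} = (-\varepsilon_p)^{-e_p} = (-\varepsilon_p)^{e_p}$ converts $\prod_{p \mid s}(-\varepsilon_p)$ into $\prod_{p \mid N} (-\varepsilon_p)^{t_p - 1}$. Collecting everything yields (\ref{Four-coeff-equal-eqn}). The main obstacle is the explicit computation of $q_{\mu_{\beta/d}}$ and the careful bookkeeping to verify that $t_p = 2v_p - e_p + 1$ has exactly the correct range; the boundary exclusion at $p \mid q_{\beta_0}$ is the only delicate point, and the rest is routine rearrangement.
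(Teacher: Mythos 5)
Your argument is correct and follows essentially the same route as the paper's own proof: both start from (\ref{Abeta-Formula}), substitute (\ref{cmulambda-formula}), identify $q_{\mu_{\beta/d}}$ prime by prime (the paper organizes this via subsets $S$ with $S'\subset S\subset S_0$, you do it via the per-prime condition ``$p\mid q_{\beta_0}$ and $v_p=u_p$'', which is the same content), and then change variables to the $t_p$ indexing. Your phrasing with the explicit bijection $(v_p,e_p)\mapsto t_p=2v_p-e_p+1$ and the identity $(-\varepsilon_p)^{t_p-1}=(-\varepsilon_p)^{e_p}$ is a cleaner bookkeeping of the same computation, and the boundary exclusion of $(u_p,0)$ when $p\mid q_{\beta_0}$ correctly accounts for $\delta_p$.
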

	\begin{proof} 
		From (\ref{cmulambda-formula}) and (\ref{Abeta-Formula}), it is clear that we can take $n=1$ above. Let $S_0$ be the set of primes dividing $q_{\beta_0}$ and $S'$ be the subset of $S_0$ with $u_p=0$. 
		
		For any set of primes $S$, denote by $N_S$ the product of all primes in $S$. From (\ref{cmulambda-formula}) and (\ref{Abeta-Formula}), we have
		\begin{align*}
			A(\beta) &= \sqrt{Q_{A_0}(\beta)} \sum\limits_{\substack{p|N \\ a_p=0}}^{u_p} c_{\mu_{\beta/(\prod_{p|N}p^{a_p})}}\big(\frac{\beta}{\prod_{p|N}p^{a_p}}\big)\\
			&= \sqrt{Q_{A_0}(\beta)} \sum\limits_{S' \subset S \subset S_0} \sum\limits_{\substack{p|(N/N_{S_0}) \\ a_p=0}}^{u_p} \sum\limits_{\substack{p | (N_{S_0}/N_{S}) \\ a_p=0}}^{u_p-1} c_{\mu_{\beta/(\prod_{p|N_S}p^{u_p} \prod_{p|(N/N_S)}p^{a_p})}}\big(\frac{\beta}{\prod_{p|N_S}p^{u_p} \prod_{p|(N/N_S)}p^{a_p}}\big).
		\end{align*}
		We are essentially splitting up the sum according to which $a_p=u_p$ for $p \in S_0$ so that, for all the $\beta'$ appearing in the sum above, we have $q_{\beta'} = N_S$. Hence applying (\ref{cmulambda-formula}), we have
		\begin{align*}
			A(\beta) &= \sqrt{Q_{A_0}(\beta)} \sum\limits_{S' \subset S \subset S_0} \sum\limits_{\substack{p|(N/N_{S_0}) \\ a_p=0}}^{u_p} \sum\limits_{\substack{p | (N_{S_0}/N_{S}) \\ a_p=0}}^{u_p-1} \sum\limits_{c | (N/N_S)} \prod\limits_{p | \frac Nc} (-\varepsilon_p) c(-Q_{A_0}(\frac{\beta}{\prod_{p|N_S}p^{u_p} \prod_{p|(N/N_S)}p^{a_p}})\frac Nc).
		\end{align*}
		Here, if $p | N_S$ then we have $p | (N/c)$ for all $c | (N/N_S)$.  Hence, we have
		\begin{align*}
			A(\beta) &= \sqrt{Q_{A_0}(\beta)} \sum\limits_{S' \subset S \subset S_0} \sum\limits_{\substack{p|(N/N_{S_0}) \\ a_p=0}}^{u_p} \sum\limits_{\substack{p | (N_{S_0}/N_{S}) \\ a_p=0}}^{u_p-1} \sum\limits_{c | (N/N_S)} \prod\limits_{p | N_S} (-\varepsilon_p) \prod\limits_{p | \frac N{cN_S}} (-\varepsilon_p) \times \\
			& \qquad \qquad c\Big(\frac{-Q_{A_0}(\beta)}{\prod_{p|N_S}p^{2u_p-1} \prod_{p|\frac{N}{cN_S}}p^{2a_p-1} \prod_{p|c}p^{2a_p}}\Big) \\
			&= \sqrt{Q_{A_0}(\beta)} \sum\limits_{S' \subset S \subset S_0} \sum\limits_{\substack{p|(N/N_{S_0}) \\ a_p=0}}^{u_p} \sum\limits_{\substack{p | (N_{S_0}/N_{S}) \\ a_p=0}}^{u_p-1} \sum\limits_{c | (N/N_S)} \prod\limits_{p | N_S} (-\varepsilon_p) \prod\limits_{p | \frac N{cN_S}} (-\varepsilon_p)^{2a_p-1} \prod_{p|c}   (-\varepsilon_p)^{2a_p} \times \\
			& \qquad \qquad c\Big(\frac{-Q_{A_0}(\beta)}{\prod_{p|N_S}p^{2u_p-1} \prod_{p|\frac{N}{cN_S}}p^{2a_p-1} \prod_{p|c}p^{2a_p}}\Big).
		\end{align*}
		Further, we can divide the set $\{c | (N/N_S)\}$ into a set of pairs $(c, c')$ such that $cc' = N/N_S$. Note that for any prime $p | (N/N_S)$ and any pair $(c, c')$ as above, $p$ divides exactly one of $N/c$ or $N/c'$. Also note, in the denominator in the last term above, the possible exponents of $p$ are as follows: if $p | q_{\beta_0}$, then the exponents vary from $0$ to $2u_p-1$, and if $p \nmid q_{\beta_0}$, then the exponents vary from $0$ to $2u_p$. In addition, a term for $-\varepsilon_p$ appears in the product if and only if the exponent of $p$ in the denominator is odd. Changing variable from $a_p$ to $t_p$, and noting the definition of $\delta_p$ in the statement of the proposition, we finally get the formula in (\ref{Four-coeff-equal-eqn}).

	\end{proof}

	\section{The cuspidality of the theta lifts}\label{cuspidality-section}
	We show the cuspidality of our theta lifts 
	\[
	\Phi_L(\nu(x,y),p,f)\coloneqq\displaystyle\int_{SL_2(\Z)\backslash{\mathfrak h}}\mathcal L_D(f)\overline{\Theta_L(\tau,\nu(x,y),p)}v^{\frac{5}{2}}\frac{dudv}{v^2}.
	\]
	The first step is to understand the action of $c \in \cG(\Q)$ on $\Phi_L(\nu(x,y),p,f)$.
	\begin{lemma}
		For any $c \in \cG(\Q)$, we have
		$$\Phi_L(c \nu(x,y), p, f) = \Phi_{c^{-1}L}(\nu(x,y), p, f).$$
	\end{lemma}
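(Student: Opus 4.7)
The plan is to unravel both sides to their defining integrals and transport everything on the left via the change of variables $\lambda \mapsto c\lambda$ inside the theta kernel. I begin by writing
\[
\Phi_L(c\nu(x,y), p, f) = \int_{\SL_2(\Z)\backslash\HH} \langle (\mathcal L_{L'/L}(f))(\tau),\, \Theta_L(\tau, c\nu(x,y), p)\rangle\, v^{5/2}\, \frac{du\, dv}{v^2},
\]
with the inner product in $\C[L'/L]$, and I focus on rewriting the theta kernel $\Theta_L(\tau, c\nu, p)$ in terms of $\Theta_{c^{-1}L}(\tau, \nu, p)$.

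The heart of the argument is the pointwise identity
\[
\theta^L_\mu(\tau, c\nu(x,y), p) = \theta^{c^{-1}L}_{c^{-1}\mu}(\tau, \nu(x,y), p) \qquad (\mu \in L'/L),
\]
which I would obtain by the substitution $\lambda = c\lambda'$ in the defining sum over $\lambda \in L + \mu$. Since $c \in \cG(\Q)$ preserves $B_A$, one has $B_A(\lambda, c\nu) = B_A(\lambda', \nu)$ and hence $\iota^{\pm}_{c\nu}(\lambda) = c \cdot \iota^{\pm}_\nu(\lambda')$ together with $Q_A(\iota^{\pm}_{c\nu}(\lambda)) = Q_A(\iota^{\pm}_\nu(\lambda'))$. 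By \eqref{ph-formula} the polynomial factor $\bigl(\exp(-\Delta/(8\pi v))(p)\bigr)(\iota_\nu(\lambda))$ depends on $\lambda$ only through $B_A(\lambda, \nu)$ and is therefore likewise unchanged. The condition $\lambda \in L + \mu$ becomes $\lambda' \in c^{-1}L + c^{-1}\mu$, and $(c^{-1}L)' = c^{-1}L'$ follows from $B_A$-invariance, so that $\mu \mapsto c^{-1}\mu$ defines an isometry of discriminant forms $L'/L \to (c^{-1}L)'/(c^{-1}L)$.

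With this identity in place I can rewrite
\[
\Theta_L(\tau, c\nu, p) = \sum_{\mu \in L'/L} e_\mu\, \theta^{c^{-1}L}_{c^{-1}\mu}(\tau, \nu, p) = \sum_{\mu' \in (c^{-1}L)'/(c^{-1}L)} e_{c\mu'}\, \theta^{c^{-1}L}_{\mu'}(\tau, \nu, p),
\]
and invoke \eqref{cD-Fourier-exp}, applied with $c$ replaced by $c^{-1}$, to re-express $\mathcal L_{L'/L}(f)$ in the same relabeling: its $e_{c\mu'}$-coefficient equals the $e_{\mu'}$-coefficient of $\mathcal L_{(c^{-1}L)'/(c^{-1}L)}(f)$. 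A term-by-term pairing in $\C[L'/L]$ then causes the labels $e_{c\mu'}$ to cancel, and the integrand becomes $\langle (\mathcal L_{(c^{-1}L)'/(c^{-1}L)}(f))(\tau),\, \Theta_{c^{-1}L}(\tau, \nu, p)\rangle$; integration over $\SL_2(\Z)\backslash\HH$ produces $\Phi_{c^{-1}L}(\nu(x,y), p, f)$.

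The only place requiring care is the bookkeeping around the discriminant-form identifications: one must verify that $\mu \mapsto c^{-1}\mu$ is not merely a bijection but an isometry $(L'/L, Q_D) \simeq ((c^{-1}L)'/(c^{-1}L), Q_D)$, which is what makes \eqref{cD-Fourier-exp} compatible with the relabeling and lets the inner product in $\C[L'/L]$ be reinterpreted as the one in $\C[(c^{-1}L)'/(c^{-1}L)]$. This is immediate from $B_A$-invariance of $c$, but it is the one step where the distinction between $c$ and $c^{-1}$ must be tracked carefully.
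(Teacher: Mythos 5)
Your proof is correct and follows essentially the same route as the paper's: both establish the pointwise identity $\theta^L_\mu(\tau, c\nu, p) = \theta^{c^{-1}L}_{c^{-1}\mu}(\tau, \nu, p)$ via the substitution $\lambda = c\lambda'$ using the equivariance $\iota^{\pm}_{c\nu}(c\lambda') = c\cdot\iota^{\pm}_\nu(\lambda')$ together with $Q_A$- and $B_A$-invariance of $c$, and both then invoke the relabeling property of the vector-valued lift from (\ref{cD-Fourier-exp}) to match the $e_\mu$-coefficients before integrating. (Minor note: the fact that the polynomial factor depends on $\lambda$ only through $B_A(\lambda,\nu)$ comes from the displayed identity $p(\iota_\nu(\lambda)) = -2^{-2}B_A(\lambda,\nu)^2$ just above (\ref{ph-formula}), rather than from (\ref{ph-formula}) itself, but the underlying reasoning is the same.)
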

	\begin{proof}
		Recall that $\iota_{g\cdot\nu}(g\cdot\lambda)=g\cdot\iota_{\nu}(\lambda)$ for $(g,\lambda,\nu)\in\cG(\R)\times\R^{6}\times {\mathcal D}^+$, which yields 
		\[
		\iota^+_{g\cdot\nu}(g\cdot\lambda)=g\cdot\iota_{\nu}^+(\lambda),~\iota^-_{g\cdot\nu}(g\cdot\lambda)=g\cdot\iota_{\nu}^-(\lambda).
		\]
		In addition, we note that 
		\[
		p(\iota_{g\cdot\nu}(\lambda)=p(\iota_{\nu}(g^{-1}\cdot\lambda))
		\]
		for $(g,\lambda)\in\cG(\R)\times\R^{6}$. For $(g,\mu)\in\cG(\R)\times L'$ we thereby have
		\begin{align*}
			\theta^L_{\mu}(\tau,g\cdot\nu(x,y),p)&=\sum_{\lambda\in\mu+L}\exp((\frac{-\Delta}{8\pi y}(p))(\iota_{g\cdot\nu}(\lambda))\e(Q(\iota^+_{g\cdot\nu}(\lambda))\tau+Q(\iota_{g\cdot\nu}^-(\lambda))\bar{\tau})\\
			&=\sum_{\lambda\in\mu+L}\exp((\frac{-\Delta}{8\pi y}(p))(\iota_{\nu}(g^{-1}\cdot\lambda))\e(Q(g\cdot\iota^+_{\nu}(g^{-1}\cdot\lambda))\tau+Q(g\cdot\iota_{\nu}^-(g^{-1}\cdot\lambda))\bar{\tau})\\
			&=\sum_{\lambda\in g^{-1}\cdot(\mu+L)}\exp((\frac{-\Delta}{8\pi y}(p))(\iota_{\nu}(\lambda))\e(Q(\iota^+_{\nu}(\lambda))\tau+Q(\iota_{\nu}^-(\lambda))\bar{\tau}) \\
			&= \theta^{g^{-1}L}_{g^{-1}\mu}(\tau,\cdot\nu(x,y),p).
		\end{align*}
		Writing $\mathcal L_D(f) = \sum_{\mu \in D} f^D_\mu e_\mu$ and using (\ref{cD-Fourier-exp}), we get 
		\begin{align*}
			\mathcal L_D(f)\overline{\Theta_L(\tau, c \nu, p)} &= \sum\limits_{\mu \in D} f^D_\mu \overline{\theta^{c^{-1}L}_{-c^{-1}\mu}(\tau,\cdot\nu(x,y),p)} \\
			&= \sum\limits_{\mu \in D} f^{c^{-1}D}_{c^{-1}\mu} \overline{\theta^{c^{-1}L}_{-c^{-1}\mu}(\tau,\cdot\nu(x,y),p)}\\
			&= \sum\limits_{\mu \in c^{-1}D} f^{c^{-1}D}_{\mu} \overline{\theta^{c^{-1}L}_{-\mu}(\tau,\cdot\nu(x,y),p)} \\
			&=\mathcal L_{c^{-1}D}(f)\overline{\Theta_{c^{-1}L}(\tau, \nu, p)}.
		\end{align*}
		Upon integration, we get the result of the lemma.
	\end{proof}
	For any cusp $c \in \cP(\Q) \backslash \cG(\Q) / \Gamma$, we see that $Q_A(x) = Q_A(c^{-1}x)$ for all $x \in \Q^6$. Hence, the lattice $\hat{L} = c^{-1}L$ has the same associated quadratic form as $L$. Therefore, the discriminant form $\hat{D} = \hat{L}'/\hat{L} = c^{-1}L'/c^{-1}L$ is isomorphic to $D$ and hence Proposition \ref{Four-coeff-equal-prop} applies to the Fourier expansion of $\Phi_{c^{-1}L}(\nu(x,y), p, f)$. In particular, $\Phi_{c^{-1}L}(\nu(x,y), p, f)$ has no constant term and therefore we get the following.
	\begin{proposition}\label{cuspidality-prop}
		For each representative $c$ of the $\Gamma$-cusps, $\Phi_{L}(c\nu(x,y),p,f)$ has no constant term. Namely, our lifts $\Phi_{L}(\nu(x,y),p,f)$ are cuspidal. 
	\end{proposition}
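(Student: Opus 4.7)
The plan is to use the preceding lemma to reduce cuspidality at an arbitrary cusp to the vanishing of the constant term at infinity of a theta lift attached to a translated lattice, and then to reuse the Fourier expansion computation already carried out in Section \ref{Thetalift}. Concretely, the lemma gives
\[
\Phi_L(c\nu(x,y),p,f) = \Phi_{\hat L}(\nu(x,y),p,f),\qquad \hat L \coloneqq c^{-1}L,
\]
so I only need to check that each such $\Phi_{\hat L}$ has vanishing $\beta = 0$ Fourier coefficient as a function on $\H_5$.

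Next I would transport the whole computation to $\hat L$. Since $c \in \cG(\Q)$ preserves $Q_A$, the lattice $\hat L$ carries the same quadratic form as $L$, and multiplication by $c^{-1}$ induces an isomorphism of quadratic modules between $D = L'/L$ and $\hat D \coloneqq \hat L'/\hat L$. With this identification, the Borcherds formula derivation of Section \ref{Thetalift} goes through verbatim for the pair $(\hat L, \hat D)$: the polynomial $p$, the projections $\iota_\nu^{\pm}$, the distinguished vectors $z,z'$, and the shape of $\mathcal L_{\hat D}(f)$ are all unchanged, so one obtains a Fourier expansion of exactly the form \eqref{Four-exp-defn} with $\O'$ replaced by its $c^{-1}$-translate and with coefficients $A(\beta)$ described by the analog of Proposition \ref{Four-coeff-equal-prop}.

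The final step is to read off the vanishing of the constant term. Because the prefactor $\sqrt{Q_{A_0}(\beta)}$ sits in front of every Fourier coefficient in \eqref{Abeta-Formula}, the $\beta = 0$ term is automatically zero; equivalently, tracing through \eqref{cmulambda-formula}, the $\lambda = 0$ contribution would involve $c(0)$, which vanishes by cuspidality of the Maass form $f$. The remaining terms are $K$-Bessel functions in a non-zero variable and decay at infinity, giving cuspidality at the cusp represented by $c$. Running through all representatives $c \in \cP(\Q)\backslash\cG(\Q)/\Gamma$ yields the proposition.

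The only delicate point, and the main obstacle in principle, is verifying that the discriminant form $\hat D$ really carries the same arithmetic structure used in the derivation of Proposition \ref{Four-coeff-equal-prop}; but this is a direct consequence of $c$ being a $\Q$-rational isometry of $Q_A$, so that $\hat D \simeq D$ as quadratic modules and the decomposition into $p$-parts from Section \ref{disc-form-sec} is preserved. Beyond that, no new analytic estimates are required.
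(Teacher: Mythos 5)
Your proof follows the paper's argument essentially verbatim: reduce to the constant term of $\Phi_{c^{-1}L}$ via the preceding lemma, observe that $c^{-1}$ is a $\Q$-rational isometry of $Q_A$ so that $\hat{L}=c^{-1}L$ and its discriminant form inherit the same structure, and then invoke the Fourier-expansion computation of Section \ref{Thetalift} together with Proposition \ref{Four-coeff-equal-prop} to conclude there is no constant term. The only cosmetic difference is that you spell out the two mechanisms by which the $\beta=0$ term dies (the vanishing prefactor $\sqrt{Q_{A_0}(\beta)}$, equivalently $c(0)=0$ by cuspidality of $f$), whereas the paper simply cites the shape of the Fourier expansion; this is a helpful clarification but not a different method.
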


	\section{Hecke Theory}\label{Hecke-theory-sec}
	\subsection{Adelization of automorphic forms}
	To study the action of the Hecke operators on our cusp forms constructed by the lift, we need  the adelic as well as non-adelic treatment of automorphic forms. 
	
	For $h \in \mathcal{H}(\A)$, we have the decomposition $h = au^{-1}$ with $(a,u) \in \GL_4(\Q) \times (\Pi_{p<\infty}\SL_4(\Z_p)\times\SL_4(\R))$. Let $\O_h \coloneqq (\Pi_{p<\infty}h_p\Z_p^4\times\R^4)\cap \Q^4$ for $h = (h_v)_{v \leq \infty}\in\mathcal{H}(\A)$. Then, we have $\O_h = a\O$ (c.f. \cite[Section 3.3]{LNP}). The dual lattice $\O_h'$ is then equal to $a^{-1}\O'$. 
	
	To obtain an adelic Fourier expansion, let $f \in S(\Gamma_0(N), r)$ be a Maass cusp form with the Fourier expansion 
	$f(z) =  \sum_{n\neq 0}c(n)W_{0,\frac{\sqrt{-1}r}{2}}(4\pi|n|y)e(x)$. Let $\Lambda$ be the standard additive character of $\A /\Q$. We introduce the following Fourier series
	\begin{equation}
		F_f(n(x)a_ykg) \coloneqq \sum_{\lambda\in \Q^4\setminus\{0\}}F_{f,\lambda}(n(x)a_ykg) \quad \forall (x,y,k,g) \in \A^4 \times \R_+^\times \times K_\infty \times \mathcal{G}(\A_f) \label{FExp}
	\end{equation}
	with
	\begin{equation*}
		F_{f,\lambda}(n(x)a_ykg) \coloneqq A_\lambda(g)y^2K_{\sqrt{-1}r}(4\pi|\lambda|_A y) \Lambda(\prescript{t}{}{\lambda}Ax), 
	\end{equation*}
	where $A_\lambda(g)$ is defined by the following conditions:
	\begin{align*}
		A_\lambda\left(\begin{pmatrix}
			1 & &\\
			& h &\\
			& & 1
		\end{pmatrix}\right) & \coloneqq \begin{cases}
			\sqrt{Q_{A_0}(\lambda)} \sum\limits_{p | N} \sum\limits_{t_p = 0}^{2u_p+\delta_p} \sum\limits_{d | n} c\big(\frac{-Q_{A_0}(\lambda)}{\prod\limits_{p | N}p^{t_p-1}d^2}\big)\prod\limits_{p|N}(-\varepsilon_p)^{t_p-1} \quad &(\lambda \in \O'_h)\\
			0 & (\lambda \in \Q^4\setminus \O_h')
		\end{cases}\\
		A_\lambda\left(\begin{pmatrix}
			s & &\\
			& h &\\
			& & s^{-1}
		\end{pmatrix}\right) & \coloneqq ||s||_\A^2 A_{||s||^{-1}_\A\lambda}\left(\begin{pmatrix}
			1 & &\\
			& h &\\
			& & 1
		\end{pmatrix}\right)\\
		A_\lambda(n(x)gk) & \coloneqq \Lambda(\prescript{t}{}{\lambda}Ax)A_\lambda(g) \qquad \forall (x,g,k) \in \A_f^4 \times \cG(\A_f) \times K_f.
	\end{align*}
	Here
	\begin{enumerate}[1.]
		\item $u_p$, $\delta_p$ and $n$ are as defined in Proposition \ref{Four-coeff-equal-prop} for $\beta = h^{-1}\lambda$.
		
		\item $(s,h) \in \A_f^\times \times \cH (\A_f)$ and $||s||_\A$ denotes the idele norm of $s$.
	\end{enumerate}
	
	Note, the definitions of $u_p$, $\delta_p$ and $n$ do not depend on the decomposition $h = au^{-1}$, which was essentially pointed out in the proof of  \cite[Lemma 3.2]{LNP}. 
	
	\begin{lemma}
		The adelic Fourier series defining $F_f$ is well-defined and is left $\cP(\Q)$-invariant and right $K=(K_fK_\infty)$-invariant smooth function on $\cG(\A)$.
	\end{lemma}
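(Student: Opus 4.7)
The plan is to verify in turn: well-definedness (together with convergence) of the series defining $F_f$, left $\cP(\Q)$-invariance, and right $K$-invariance together with smoothness. For well-definedness, the value $A_\lambda(g)$ is specified by three prescriptions corresponding to the Iwasawa decomposition $\cG(\A_f) = \cN(\A_f)\cL(\A_f)K_f$, and I would check they agree on overlaps. The central point is that the lattice $\O_h = a\O$ and its dual $\O_h' = a^{-1}\O'$ depend only on $h \in \cH(\A_f)$, not on the decomposition $h = au^{-1}$ (this is recalled from \cite[Lemma 3.2]{LNP}), so the data $(u_p, \delta_p, n, q_{\beta_0})$ entering (\ref{Four-coeff-equal-eqn}) are intrinsic to $h$ and $\lambda$. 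Compatibility of the three prescriptions then follows from triviality of $\Lambda$ on integral dual-lattice pairings, the homogeneity built into the Levi-scalar rule, and the $K_f$-stability of $\O_h'$. Convergence of (\ref{FExp}) on compact subsets of $\cG(\A)$ is standard, using the exponential decay of $K_{\sqrt{-1}r}(4\pi|\lambda|_A y)$ in $|\lambda|_A$ together with polynomial bounds on the Fourier coefficients $c(n)$ of $f$.

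For left $\cP(\Q)$-invariance I would decompose $\cP = \cN\cL$ and verify each factor separately. Invariance under $n(x_0) \in \cN(\Q)$ is immediate from $\Lambda|_\Q \equiv 1$, since left translation produces only the phase $\Lambda({}^t\lambda A_0 x_0)$ with ${}^t\lambda A_0 x_0 \in \Q$. For $\diag(s_0, h_0, s_0^{-1}) \in \cL(\Q)$, the commutation relation $\diag(s_0, h_0, s_0^{-1})\,n(x) = n(s_0 h_0 x)\,\diag(s_0, h_0, s_0^{-1})$, which is valid because ${}^th_0 A_0 h_0 = A_0$, together with the substitution $\lambda \mapsto \|s_0\|_\A^{-1}\,{}^th_0^{-1}\lambda$ in the sum reduces the problem to checking invariance of the coefficient formula. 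Since $s_0 \in \Q^\times$ has idele norm $1$, the scalar part is trivial; the $h_0$-part requires that the formula (\ref{Four-coeff-equal-eqn}) be preserved under the $\cH(\Q)$-action on $\O_h'$. Right $K_f$-invariance is built into the definition of $A_\lambda$, and right $K_\infty$-invariance holds because the archimedean Bessel factor depends only on the coset $n(x)a_y K_\infty$; smoothness follows from smoothness of the Bessel function at the archimedean place and locally constant behavior of $A_\lambda$ at the finite places.

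The main obstacle I anticipate is the $\cL(\Q)$-invariance check above, specifically verifying that the intricate coefficient formula (\ref{Four-coeff-equal-eqn}) transforms correctly under $h_0 \in \cH(\Q)$ acting on $\O_h'$. This requires showing that all the $p$-adic invariants entering the formula, namely $Q_{A_0}(\beta)$, the exponents $u_p$ of the prime-power part of $\beta$, the divisor $q_{\beta_0}$, the Atkin-Lehner factors $(-\varepsilon_p)^{t_p-1}$, and the divisor sum over the prime-to-$N$ part $n$, are all preserved by $h_0$. Preservation of $Q_{A_0}$ is definitional; the $p$-adic filtration invariants follow from the local fact that $\cH(\Q_p)$ preserves the filtration of $\O_p'$ by powers of $\P_p$ at each $p \mid N$, which is a consequence of $h_0$ acting as a norm-preserving element on $B_p$.
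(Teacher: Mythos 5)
The paper does not actually prove this lemma: it is stated without a proof environment, with the one nontrivial ingredient (that $u_p$, $\delta_p$, $n$ are independent of the decomposition $h=au^{-1}$) delegated to the preceding remark citing \cite[Lemma 3.2]{LNP}. So there is no paper proof to compare against, only the mathematics of your proposal.

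Your overall plan --- well-definedness via the intrinsic nature of $\O_h$ and its dual, left $\cN(\Q)$-invariance via $\Lambda|_{\Q}\equiv 1$, left $\cL(\Q)$-invariance via commutation plus a reindexing of $\lambda$, right $K$-invariance built into the definition, standard smoothness/convergence --- is sound in outline and matches the intent of the paper. A small slip: for $h_0\in\cH(\Q)$ the correct reindexing coming from the phase $\Lambda({}^t\lambda A_0\,h_0 x)$ is $\lambda\mapsto h_0^{-1}\lambda$, not ${}^th_0^{-1}\lambda$ (the two are conjugate via $A_0$, so this is cosmetic but should be fixed).

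The genuine problem is in what you identify as the main obstacle. You assert that the $\cL(\Q)$-invariance requires showing the coefficient formula (\ref{Four-coeff-equal-eqn}) is preserved under the $\cH(\Q)$-action, i.e.\ that $Q_{A_0}(\beta)$, the exponents $u_p$, the divisor $q_{\beta_0}$, etc., are individually invariant under $h_0$. This is not what the invariance requires, and the ``local fact'' you invoke to establish it fails as stated. After the reindexing $\lambda\mapsto h_0\lambda$ one compares $A_{h_0\lambda}(\gamma_f g_f)$ with $A_\lambda(g_f)$: writing the Levi part of $g_f$ as $h=au^{-1}$, the Levi part of $\gamma_f g_f$ is $h_0h=(h_0a)u^{-1}$, so the element $\beta=(h_0a)^{-1}(h_0\lambda)=a^{-1}\lambda$ computed for the left-hand side is \emph{literally the same} as the $\beta=a^{-1}\lambda$ for the right-hand side. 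The equality of coefficients is therefore tautological from the definition of $A_\lambda$; no invariance of the formula under $\cH(\Q)$ is used, and none should be expected, since $h_0\in\cH(\Q)$ does not preserve the lattice $\O'$. Your appeal to ``$\cH(\Q_p)$ preserves the filtration of $\O_p'$ by powers of $\P_p$'' is in fact correct at $p\mid N$ (there $B_p$ is division and $\P^m$ is characterized by the valuation of $\Nrd$, which $\cH(\Q_p)$ preserves), but it is false at $p\nmid N$, where $\cH(\Q_p)$ is a split orthogonal group that certainly does not preserve $\O_p\simeq M_2(\Z_p)$; and the formula (\ref{Four-coeff-equal-eqn}) does depend on the prime-to-$N$ part $n$ of $\beta$ through the divisor sum $\sum_{d\mid n}$, so this gap is material. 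In short: the real content of the lemma is the well-definedness under changes of Iwasawa decomposition (handled by \cite[Lemma 3.2]{LNP}, as you correctly cite), while the $\cL(\Q)$-invariance you flag as the main difficulty is in fact the part that is automatic once well-definedness is in hand.
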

	
	For $r \in \C$, let $\mathcal{M}(\cG(\A), r )$ denote the space of smooth functions $F$ on $\cG(\A)$ satisfying the following conditions:
	\begin{enumerate}[1.]
		\item $\Omega \cdot F = \frac{1}{8}(r^2 - 4)F$, where $\Omega$ is the Casimir operator defined in \cite{LNP}.
		
		\item For any $(\gamma,g,k) = \cG(\Q) \times \cG(\A) \times K$, we have $F(\gamma g k) = F(g)$.
		
		\item F is of moderate growth.
	\end{enumerate}
	
	Note that $F \in \mathcal{M}(\cG(\A),r)$ has the Fourier expansion
	\begin{equation*}
		F(g)= \sum_{\lambda\in Q^4}F_\lambda(q), \quad F_\lambda(g) \coloneqq \int_{\A^4/\Q^4}F(n(x)g)\Lambda(\prescript{t}{}{\lambda}Ax)dx,
	\end{equation*}
	where $dx$ is the invariant measure normalized so that the volume of $\A^4/\Q^4$ is one. The adelic function $F$ is called a cusp form if $F_0 \equiv 0$ in the Fourier expansion.
\begin{proposition}\label{adelic-automorphy-thetalift}
The adelic function $F_f$ is a cusp form belonging to $\mathcal{M}(\cG(\A), \sqrt{-1}r )$
\end{proposition}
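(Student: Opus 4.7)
The plan is to proceed by the standard classical-to-adelic dictionary, bootstrapping from the results already proved for $\Phi_L(\nu(x,y),f)$ in Theorem \ref{modularity-thm} and Proposition \ref{cuspidality-prop}. The preceding lemma already delivers well-definedness, smoothness, left $\cP(\Q)$-invariance, and right $K$-invariance of $F_f$; the outstanding items are full left $\cG(\Q)$-invariance, the Casimir eigenvalue equation, moderate growth, and the vanishing of the constant term $F_{f,0}$.

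My first step is to establish the bridge identity at the archimedean fiber: for $g_\infty = n(x)a_y k_\infty \in \cG(\R)$ viewed adelically with trivial finite part, I would show $F_f(g_\infty) = \Phi_L(\nu(x,y), f)$. This is verified by directly matching the adelic expansion (\ref{FExp}), restricted to this archimedean fiber, with the classical expansion (\ref{Four-exp-defn}): the archimedean Whittaker shape $y^2 K_{\sqrt{-1}r}(4\pi |\lambda|_A y) e(\prescript{t}{}\lambda A x)$ matches on the nose, and the coefficient $A_\lambda(1)$ is defined precisely so as to equal the non-adelic Fourier coefficient from Proposition \ref{Four-coeff-equal-prop}. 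More generally, for $h \in \cH(\A_f)$ with decomposition $h = au^{-1}$, the same comparison together with $\O_h = a\O$ yields $F_f(g_\infty h) = \Phi_{a^{-1}L}(\nu(x,y), f)$, which handles $F_f$ on each finite-adelic fiber.

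From this identification, the remaining properties follow in a fairly mechanical way. For left $\cG(\Q)$-invariance, given $\gamma \in \cG(\Q)$, I would decompose $\cG(\A)$ into a finite disjoint union of double cosets $\cG(\Q)\cG(\R) h K_f$ indexed by class representatives $h$ for $\cH$; on each, $F_f$ agrees with a classical lift $\Phi_{a^{-1}L}$ which is invariant under the stabilizer in $\cG(\Q)$ of the corresponding lattice, and this classical invariance translates into the global adelic invariance under $\cG(\Q)$ when combined with the $\cP(\Q)$-invariance and right $K$-invariance already established. The Casimir eigenvalue equation is a purely archimedean differential condition which transfers from $\Phi_L$ to $F_f$ by the bridge identity, since the two functions agree on each archimedean chart. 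Moderate growth is clear from the exponential decay of $K_{\sqrt{-1}r}$ in the Fourier expansion (\ref{FExp}). Finally, cuspidality is immediate: the sum in (\ref{FExp}) is taken only over $\lambda \in \Q^4 \setminus \{0\}$, so $F_{f,0} \equiv 0$, consistent with Proposition \ref{cuspidality-prop}.

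The hard part will be verifying the coherence of the bridge identity $F_f(g_\infty h) = \Phi_{a^{-1}L}(\nu(x,y),f)$ across different choices of $h$ and decompositions $h = au^{-1}$, i.e., that the integers $u_p$, $\delta_p$, $n$, and the dependence on the Atkin--Lehner signs $\varepsilon_p$ in the adelic definition of $A_\lambda(h)$ are genuinely intrinsic to the class of $h$ and match the classical Fourier coefficients of $\Phi_{a^{-1}L}$ at every cusp. This is essentially the content of the remark following the definition of $A_\lambda$, together with \cite[Lemma 3.2]{LNP}, which handles the independence of the decomposition. Once this coherence is secured, all claimed properties of $F_f$ follow formally from their classical counterparts for $\Phi_L$.
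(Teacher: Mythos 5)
Your proposal is correct and takes essentially the same approach as the paper, which simply invokes the argument of \cite[Theorem 3.3]{LNP} together with the Fourier expansion established in Section \ref{Thetalift}. You spell out what that citation actually does — the bridge identity matching $F_f$ fiber-by-fiber with the classical lifts $\Phi_{a^{-1}L}$ via Proposition \ref{Four-coeff-equal-prop}, and the coherence of that identity across decompositions $h = au^{-1}$ via \cite[Lemma 3.2]{LNP} — which is precisely where the content lies.
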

\begin{proof}
By the argument similar to \cite[Theorem 3.3]{LNP} this follows from the Fourier expansion discussed in Section \ref{Thetalift}.
\end{proof}
	\subsection{Sugano Theory}
	We will show that if $f$ is a Hecke eigenform then $F_f$ is an Hecke eigenform by using the local  theory of Sugano \cite[Section 7]{Sug}. For a prime $p$, let $F = \Q_p$ with the ring of integers $\Z_p$. 
 Let $n_0 \leq 4$ and let $S_0\in M_{n_0}(F)$ be an anisotropic even symmetric matrix of degree $n_0$. For the $m  \times m$ matrix $J_m = \begin{psmallmatrix}
		& & 1\\
		& \reflectbox{$\ddots$} & \\
		1 & & 
	\end{psmallmatrix}$, let $G_m$ denote the group of $F$-valued points of the orthogonal group of degree $2m+n_0$, defined by the matrix $Q = \begin{psmallmatrix}
		& &J_m \\
		& S_0 & \\
		J_m & &
	\end{psmallmatrix}$. Denote by $L_m \coloneqq \Z_p^{2m+n_0}$ the maximal lattice with respect to $Q_m$ and let $K_m$ be the maximal compact open subgroup of $G_m$ defined by the lattice 
	\begin{equation}
		K_m \coloneqq \{g \in G_m \mid gL_m = L_m\}.
	\end{equation}
	
	Let $\mathcal{H}_m$ be Hecke algebra for $(G_m,K_m)$ and define $C^{(r)}_m \in \mathcal{H}_m$ to be the double cosets $K_m c_m^{(r)}K_m$, where 
	\begin{equation*}
		c_m^{(r)} \coloneqq \diag(p,\ldots,p,1,\ldots 1, p^{-1},\ldots, p^{-1}) \in G_m \label{c1,1}
	\end{equation*}
	which is a diagonal matrix whose first $r$ and last $r$ entries are $p$ and $p^{-1}$ respectively. By \cite[Section 7]{Sug}, $\{C_m^{(r)}\mid 1 \leq r\leq m\}$ forms generators of the Hecke algebra $\mathcal{H}_m$. 
	
	We embed $G_i$ for $i \leq m$ in $G_m$ as a subgroup by the middle $(2i+n_0) \times (2i+n_0)$ block. We regard $K_i$ as subgroup of $K_m$ similarly. The invariant measure of $G_m$ is normalized so that the volume of $K_i$ is one for each $i \leq m$.
%
	
	For a prime $p \nmid N$, we have $n_0 = 0$ and $m=3$. In this case, the lattice $L_3$ is self-dual.
	For a non-negative integer $k$, let 
	\begin{equation}\label{f-defn}
		f_{k,j} \coloneqq \frac{p^{j-1}(p^{k-j+1}-1)(p^{k-j}+1)}{p^j -1} \quad (\forall j \in \Z \setminus \{0\}),
	\end{equation}
	 a special case of \cite[7.11]{Sug} for $n_0 =\delta = 0$. For positive integers $k,r$, set $R_k^{(r)} \coloneqq K_k/(K_k \cap c_k^{(r)}K_k(c_k^{(r)})^{-1})$, and let $|R_k^{(r)}|$ denote the cardinality of $R_k^{(r)}$. We have
	\begin{equation}\label{R-defn}
		|R_k^{(r)}| = \begin{cases}
			\Pi_{j=1}^{r}f_{k,j} \qquad \hfill (1 \leq r \leq k);\\
			1 \hfill (r=0).
		\end{cases}
	\end{equation}
Following the methods in Section 4 of  \cite{LNP}, we get the following theorem (essentially Theorem 4.11 of \cite{LNP} for $n = 1/2$).
	\begin{theorem}\label{Ff-Eform}
		Suppose that $f$ is a Hecke eigenform and let $\lambda_p$ be the Hecke eigenvalue of $f$ at $p<\infty$ with $p \nmid N$. Then the following holds.
\begin{enumerate}
\item	 $F_f$ is a Hecke eigenform. 
		\item Let $\mu_i$ be the Hecke eigenvalue with respect to the Hecke operator $C_3^{(i)}$ for $1 \leq i \leq 3$. We have
		\begin{equation*}
			\mu_1 = p^2(\lambda_p^2 -2)+ p f_{2,1} = p^2(\lambda_p^2 + p + p^{-1});
		\end{equation*}
		\begin{equation*}
			\mu_i = |R_2^{(i-1)}|\left(\mu_1 - \frac{p^{i-1}-1}{p^i-1}f_{3,1}\right), (i = 2,3).
		\end{equation*}
		\end{enumerate}
	\end{theorem}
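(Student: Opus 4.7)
The plan is to follow the strategy of \cite[Section 4]{LNP} essentially verbatim. The statement asserts that for primes $p\nmid N$, $F_f$ is an eigenform for Sugano's three Hecke generators with eigenvalues matching those of \cite[Theorem 4.11]{LNP} under the parameter substitution $n=1/2$ (which corresponds to passing from dimension $8n+1=5$). The two ingredients are the adelic Fourier expansion of $F_f$ established in Proposition \ref{adelic-automorphy-thetalift} and Sugano's explicit formulas for the Hecke action at split primes.

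First I would localize at a prime $p\nmid N$. Since $p\nmid d_B=N$, the completion $B_p\simeq M_2(\Q_p)$ splits and the local maximal order $\O_p$ is conjugate to $M_2(\Z_p)$, which is self-dual. Hence the local discriminant form $D_p$ is trivial, and in Sugano's notation we are in the case $n_0=0$, $m=3$, so $\cG(\Q_p)\simeq\OO(3,3)(\Q_p)$ is split with Hecke algebra $\mathcal{H}_p$ freely generated by $C_3^{(1)},C_3^{(2)},C_3^{(3)}$. Because $p\nmid N$, the $p$-component of the formula (\ref{Four-coeff-equal-eqn}) reduces to the plain divisor sum $\sum_{d\mid n_p}c(-Q_{A_0}(\beta)/d^2)$ (with $n_p$ the $p$-part of the index $n$ in Proposition \ref{Four-coeff-equal-prop}), since the ramified correction factors $\prod_{p\mid N}(-\varepsilon_p)^{t_p-1}$ are independent of the $p$-direction. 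This cleanly separates the Hecke action at $p$ from the structure coming from primes dividing $N$.

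Second, I would compute the action of each $C_3^{(i)}$ on Fourier coefficients. Choosing a decomposition $K_3 c_3^{(i)} K_3=\bigsqcup_j \eta_{ij} K_3$, the $\lambda$-th Fourier coefficient of $C_3^{(i)}\cdot F_f$ at argument $g$ is $\sum_j A_\lambda(g\eta_{ij})$. Sugano's explicit enumeration in \cite[Section 7]{Sug} writes this as a linear combination, with coefficients built from $|R_k^{(r)}|$ and the $f_{k,j}$ of (\ref{f-defn}), of values $A_{\lambda'}(g)$ at $\lambda'\in\{p^{\pm 1}\lambda,\lambda,\ldots\}$. Substituting the explicit formula for $A(\beta)$ and invoking the Hecke recursion $\lambda_p\, c(n)=c(pn)+p\,c(n/p)$ (with $c(n/p)=0$ if $p\nmid n$) collapses these linear combinations into a scalar multiple of $A(\beta)$. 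For $C_3^{(1)}$ the computation produces $p^2(\lambda_p^2-2)+p\,f_{2,1}$; checking $f_{2,1}=(p+1)^2$ one verifies $p^2(\lambda_p^2-2)+p(p+1)^2=p^2(\lambda_p^2+p+p^{-1})$, which gives $\mu_1$. For $C_3^{(2)}$ and $C_3^{(3)}$ the same mechanism produces the stated formulas $\mu_i=|R_2^{(i-1)}|\bigl(\mu_1-\tfrac{p^{i-1}-1}{p^i-1}f_{3,1}\bigr)$.

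The main obstacle is purely bookkeeping: correctly enumerating the coset representatives $\eta_{ij}$ for each $C_3^{(i)}$ and tracking how the scaling $\lambda\mapsto p^{\pm 1}\lambda$ propagates through the divisor sum $\sum_{d\mid n_p}c(-Q_{A_0}(\beta)/d^2)$ after applying the Hecke recursion. This book-keeping is carried out in detail in \cite[Section 4]{LNP}, and since at the prime $p\nmid N$ the local situation is identical to theirs (split $\OO(3,3)$ with self-dual lattice), the computation transfers directly with the parameter specialization $n=1/2$. Part (1) of the theorem follows since the three Hecke generators act simultaneously diagonally on $F_f$.
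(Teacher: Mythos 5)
Your proposal is correct and takes essentially the same approach as the paper: the paper itself provides no proof beyond the remark that the theorem follows "Following the methods in Section 4 of \cite{LNP}" and is "essentially Theorem 4.11 of \cite{LNP} for $n = 1/2$," and you are filling in exactly what that transfer entails (splitness at $p\nmid N$, self-duality of $\O_p\simeq M_2(\Z_p)$, $n_0=0$, $m=3$, Sugano's coset enumeration, and the Hecke recursion collapsing the resulting linear combination). The only small slip is the displayed Hecke recursion, which with the paper's Whittaker-normalized Fourier expansion should read $\lambda_p c(n)=p\,c(pn)+c(n/p)$ rather than $c(pn)+p\,c(n/p)$, but this does not affect your argument since you defer to the computation in \cite{LNP}, and your verification $f_{2,1}=(p+1)^2$ and $p^2(\lambda_p^2-2)+p(p+1)^2=p^2(\lambda_p^2+p+p^{-1})$ is correct.
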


	\subsection{The case $p \mid N$}
	When $p \mid N$, we have $m = 1$ and $n_0 = 4$. Hence, the Hecke algebra $\mathcal{H}_1$ is generated by $C_1^{(1)}$ which is the double coset $K_1c_1^{(1)}K_1$ as defined in \eqref{c1,1}. Let $n(x) \in G_1$ be as defined in Section \ref{oth-gp-modfm-section} and let $(t,g) \coloneqq
	\diag(t,g,t^{-1}) \in G_1$ for $t \in \Q_p^\times$ and $g \in G_0$. 
	
	\begin{lemma} \label{C11-dcmp}
		\begin{equation*}
			C_1^{(1)} = \bigsqcup_{x \in \mathfrak{X}_1}(p,1_4)n(x)K_1 
			\sqcup\bigsqcup_{x \in \mathfrak{X}_3}(1,1_4) n(x)K_1 
			\sqcup (p^{-1},1_4)K_1
		\end{equation*}
		where
		\begin{equation*}
			\mathfrak{X}_1= \left\{x \in p^{-1}\O/ \O \right\}, \, \mathfrak{X}_3= \left\{x \in (\O' - \O)/ \O \right\}.
		\end{equation*}
	\end{lemma}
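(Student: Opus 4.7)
The plan is to combine the Iwasawa decomposition $G_1 = N_1 A_1 K_1$, where $A_1 = \{(t, 1_4) : t \in \Q_p^\times\}$ is the split torus, with the Cartan decomposition $G_1 = \bigsqcup_{r \ge 0} K_1 (p^r, 1_4) K_1$ for the rank-one non-split orthogonal group $G_1$. Because $\O_p$ is the unique maximal order in the local division algebra $B_p$, every element of $G_0(\Q_p)$ preserves $\O_p$, so $G_0(\Q_p) \subseteq K_1$. Consequently every left $K_1$-coset has a representative of the form $n(x)(t, 1_4)$, and the integer $v_p(t)$ is a well-defined invariant of the coset; the three families in the statement correspond to $v_p(t) = 1, 0, -1$ respectively.

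For membership I would treat the three families separately. In family 1, the identity $(p, 1_4) n(x) = c_1^{(1)} n(x) = n(px) c_1^{(1)}$ together with $n(px) \in K_1 \Leftrightarrow px \in \O_p$ shows $(p, 1_4) n(x) \in K_1 c_1^{(1)}$ precisely for $x \in p^{-1}\O_p$. In family 3, the Weyl element $w = \begin{pmatrix} 0 & 0 & 1 \\ 0 & 1_4 & 0 \\ 1 & 0 & 0 \end{pmatrix}$ lies in $K_1$ and conjugates $c_1^{(1)}$ to its inverse, so $(p^{-1}, 1_4) = w c_1^{(1)} w^{-1} \in K_1 c_1^{(1)} K_1$. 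The central case is family 2, namely $n(x)$ with $x \in \O'_p \setminus \O_p$: by the Cartan decomposition it suffices to check that the Cartan depth of $n(x)$ equals $1$, i.e.\ $v_p([L_1 : L_1 \cap n(x) L_1]) = 1$. Using the description $\O'_p = w_p^{-1}\O_p$ with $w_p^2 = p$, one has $Q_{S_0}(x) \in p^{-1}\Z_p^\times$ and $px \in \O_p$, and a direct calculation of $n(-x)(a, v, b)$ for $(a, v, b) \in L_1$ forces $b \in p\Z_p$, yielding $L_1 \cap n(x) L_1 = \Z_p \oplus \O_p \oplus p\Z_p$; symmetrically $(L_1 + n(x) L_1)/L_1$ is generated by the class of $n(x) z' - z' = (Q_{S_0}(x), x, 0)$, which has order exactly $p$ because $pQ_{S_0}(x) \in \Z_p$ while $Q_{S_0}(x) \notin \Z_p$.

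Disjointness follows from the $K_1$-invariance of $v_p(t)$ for the three families, combined with the identification $n(x)K_1 = n(x')K_1 \Leftrightarrow x - x' \in \O_p$ within each family, which produces $p^4$, $p^2 - 1$, and $1$ distinct cosets respectively. For exhaustion, any $g \in K_1 c_1^{(1)} K_1$ written as $n(x)(t, 1_4)k$ via Iwasawa must have Cartan depth $1$; repeating the lattice computation at the Iwasawa strata $v_p(t) = \pm 1$ forces $x$ into $p^{-1}\O_p$ or $\{0\}$ respectively, and at $v_p(t) = 0$ forces $x \in \O'_p \setminus \O_p$, recovering exactly the three families. The total $p^4 + p^2$ then equals $|K_1 c_1^{(1)} K_1 / K_1|$ automatically. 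The main technical obstacle is the second family, where the non-self-duality of $\O_p$ within $\O'_p$ combined with careful bookkeeping of the $p$-adic valuation $Q_{S_0}(x) \in p^{-1}\Z_p^\times$ is what pins down the Cartan depth; the analogous calculations at the other Iwasawa strata are variations on the same argument and provide the final exhaustion step.
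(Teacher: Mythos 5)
Your proof is correct in its overall structure, but it takes a genuinely different route from the paper. The paper's proof is essentially a one-line citation: Lemma \ref{C11-dcmp} is deduced directly from Sugano's general single-coset decomposition \cite[Lemma~7.1]{Sug}, specialized to $v=0$ and $r=1$, after observing that $c_0^{(0)}=1_4$, $R_0^{(0)}=\{1\}$, $u=1_4$, and that the auxiliary sets $\mathfrak{X}_{0,2}^{(1)}$ and $\mathfrak{X}_{0,4}^{(1)}$ appearing in Sugano's lemma are empty for these parameter values. By contrast, you reprove the decomposition from scratch via Iwasawa ($G_1=N_1A_1K_1$, with $G_0(\Q_p)\subset K_1$ because the maximal order $\O_p$ in the local division algebra is unique) together with the Cartan decomposition for the rank-one group, using the lattice index $[L_1:L_1\cap gL_1]$ as a complete invariant of the $K_1$-double coset. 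Your computations for the three families are accurate; in particular the key fact $Q_{A_0}(x)\in p^{-1}\Z_p^{\times}$ for $x\in\O'_p\setminus\O_p$ (which follows from $\O'_p=\P^{-1}$ and is constant on cosets modulo $\O_p$) is exactly what forces depth one, and the identity $(p,1_4)n(x)=n(px)(p,1_4)$ handles the first family. The Weyl-element trick for $(p^{-1},1_4)$ is also correct, and $w\in K_1$ has determinant $-1$, so the Cartan decomposition is valid for the full disconnected $\OO$.

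The one place where your argument is terse enough to warrant a flag is the exhaustion step. You assert that $v_p(t)\in\{-1,0,1\}$ for elements of Cartan depth one without justification. This does follow, but needs a line of argument: from the last coordinate of $g L_1$ one gets $[L_1:L_1\cap gL_1]\geq p^{|m|}$ where $m=v_p(t)$, using $[L_1:L_1\cap gL_1]=[L_1+gL_1:L_1]$ (discriminants of $L_1$ and $gL_1$ agree since $g$ is an isometry). Hence depth one forces $|m|\leq 1$. With this added, your proof is complete. What your route buys is a self-contained and explicit account of the coset structure, at the cost of length; the paper's route is shorter and defers the combinatorics to Sugano, which is the appropriate choice in context since Sugano's lemma is used repeatedly elsewhere in the Hecke theory. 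Your approach would be the natural one to record if one wanted the lemma to be verifiable without unwinding Sugano's notation.
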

	\begin{proof}
		This is a direct result of \cite[Lemma 7.1]{Sug} for $v = 0$ and $r=1$. Note, $c_0^{(0)}=1_4$ and hence, $R_0^{(0)} = 1$ and $u=1_4$. $\mathfrak{X}_{0,1}^{(1)}$ and $\mathfrak{X}_{0,3}^{(1)}$ simplify as above whereas $\mathfrak{X}_{0,2}^{(1)}$ and $\mathfrak{X}_{0,4}^{(1)}$ are empty since $r=1$ and $v = 0$ respectively.  
	\end{proof}
	
	We can now describe the action of $C_1^{(1)}$ with the invariant measure $dx$ of $G_1$ normalized so that the volume $\int_{K_1}dx=1$. Define 
	\begin{equation*}
		(C_1^{(1)}\cdot \Phi)(g) \coloneqq \int_{G_1} \text{char}_{K_1c_1^{(1)}K_1}(x)\Phi(gx)dx
	\end{equation*}
	for $\Phi \in \mathcal{M}(\cG(\A),r)$.
	
The following proposition derives the action of $C_1^{(1)}$ on Fourier coefficients of $\Phi$.
	\begin{proposition} \label{He-Act}
	Let $\Phi \in \mathcal{M}(\cG(\A),\sqrt{-1}r)$ be a lift. Then
		\begin{equation*}
			(C_1^{(1)}\cdot \Phi)(n(x)a_y) =  \sum_{\lambda\in \O'\setminus\{0\}}A'_\lambda(1)y^2K_{\sqrt{-1}r}(4\pi\sqrt{Q_{A_0}(\lambda)}y)\Lambda(\prescript{t}{}{\lambda}{A_0}(x)),
		\end{equation*} 
	where
	\begin{equation*}
		 A'_\lambda(1) =
	\begin{cases}
	p^2A_{p\lambda}(1)-A_\lambda + p^2A_\lambda(1) + p^2A_{p^{-1}\lambda}(1) &  \text{if }  \lambda \in p \O'\setminus \{0\};\\
	p^2A_{p\lambda}(1)-A_\lambda + p^2A_\lambda(1) & \text{if }\lambda \in  \O\setminus p \O';  \\ 
	p^2A_{p\lambda}(1)-A_\lambda & \text{if } \lambda \in \O'\setminus \O.
	\end{cases}
	\end{equation*}	
	\end{proposition}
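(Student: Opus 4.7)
The plan is to expand $(C_1^{(1)}\cdot\Phi)(n(x)a_y)$ using the coset decomposition of Lemma \ref{C11-dcmp} together with the adelic Fourier expansion \eqref{FExp} of $\Phi$. By the definition of the Hecke action we have
\begin{equation*}
(C_1^{(1)}\cdot\Phi)(n(x)a_y) = \sum_{x'\in\mathfrak{X}_1}\Phi\bigl(n(x)a_y(p,1_4)_pn(x')_p\bigr) + \sum_{x'\in\mathfrak{X}_3}\Phi\bigl(n(x)a_yn(x')_p\bigr) + \Phi\bigl(n(x)a_y(p^{-1},1_4)_p\bigr),
\end{equation*}
where the subscript $p$ denotes the element of $\cG(\A_f)$ supported at $p$ only. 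Expanding each $\Phi$ via \eqref{FExp}, the proof reduces to computing, for each $\lambda\in\Q^4\setminus\{0\}$, the sum of $A_\lambda$-values at the coset representatives.

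The $A_\lambda$ covariance laws stated right after \eqref{FExp}, together with the identity $(p,1_4)n(x')=n(px')(p,1_4)$ in $\cG(\Q_p)$, give
\begin{equation*}
A_\lambda\bigl((p,1_4)_pn(x')_p\bigr)=p^{-2}\Lambda_p(\prescript{t}{}{\lambda}A_0\,px')\,A_{p\lambda}(1),\qquad A_\lambda\bigl(n(x')_p\bigr)=\Lambda_p(\prescript{t}{}{\lambda}A_0x')\,A_\lambda(1),
\end{equation*}
and $A_\lambda\bigl((p^{-1},1_4)_p\bigr)=p^2A_{p^{-1}\lambda}(1)$. Since $A_\mu(1)=0$ unless $\mu\in\O'$, only $\lambda\in\O'$ survive in the sums over $\mathfrak{X}_1$ and $\mathfrak{X}_3$, while the third term contributes only for $\lambda\in p\O'$.

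The remaining work is the evaluation of two character sums. After the substitution $y=px'$, the sum over $\mathfrak{X}_1=p^{-1}\O/\O$ becomes $\sum_{y\in\O/p\O}\Lambda_p(\prescript{t}{}{\lambda}A_0y)$, while the sum over $\mathfrak{X}_3=(\O'\setminus\O)/\O$ equals $\sum_{x'\in\O'/\O}\Lambda_p(\prescript{t}{}{\lambda}A_0x')-1$. Under the local duality $\O_p^\vee=\O'_p$ and $(\O'_p)^\vee=\O_p$ with respect to the pairing $(\xi,\eta)\mapsto\Lambda_p(\prescript{t}{}{\xi}A_0\eta)$, orthogonality of characters evaluates these sums to $p^4\cdot\mathbf{1}[\lambda_p\in\O'_p]$ and to $p^2\cdot\mathbf{1}[\lambda_p\in\O_p]-1$ respectively. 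For $\lambda\in\O'$ we automatically have $\lambda_p\in\O'_p$, so the $\mathfrak{X}_1$-sum always contributes $p^2A_{p\lambda}(1)$; the $\mathfrak{X}_3$-sum contributes $(p^2-1)A_\lambda(1)$ when $\lambda_p\in\O_p$ and $-A_\lambda(1)$ when $\lambda_p\in\O'_p\setminus\O_p$. The inclusion $p\O'_p\subset\O_p$ (a consequence of $\O'_p=w_p^{-1}\O_p$ and $p=w_p^2$ from Section \ref{Quat-MaxOrder}) shows that the three alternatives $\lambda_p\in p\O'_p$, $\lambda_p\in\O_p\setminus p\O'_p$, $\lambda_p\in\O'_p\setminus\O_p$ partition the relevant $\lambda$; combining them with the above contributions — together with the extra $p^2A_{p^{-1}\lambda}(1)$ in the first alternative — reproduces the three-case formula for $A'_\lambda(1)$. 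The main technical obstacle is the careful bookkeeping of these two character sums and of the local duality determining when each summand contributes; the orbit identifications $\mathfrak{X}_1\leftrightarrow\O/p\O$ and $\mathfrak{X}_3\leftrightarrow(\O'/\O)\setminus\{0\}$ are the essential structural input.
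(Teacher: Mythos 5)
Your argument is correct and follows essentially the same route as the paper: decompose $C_1^{(1)}$ via Lemma \ref{C11-dcmp}, reduce the Fourier coefficient of the Hecke translate to the two character sums over $\mathfrak{X}_1$ and $\mathfrak{X}_3$, evaluate them by orthogonality and the duality $\O_p \leftrightarrow \O'_p$, and combine the three cases. The only presentational difference is that you apply the covariance laws for $A_\lambda$ directly to the coset representatives in $\cG(\A_f)$, whereas the paper obtains the same reduction by left-multiplying by $a^\#_{p^{\pm 1}}=\diag(p^{\pm 1},1_4,p^{\mp 1})\in\cG(\Q)$, absorbing the residual adelic element into $K_fK_\infty$, and re-indexing $\lambda$ at the end.
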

	\begin{proof}
		Since $\int_{K_1}dx = 1$, Lemma \ref{C11-dcmp} implies that the action of $C_1^{(1)}$ on $\Phi$ can be expressed as 
		\begin{equation*}
			(C_1^{(1)}\cdot \Phi)(g) = \sum_{x \in \mathfrak{X}_1} \Phi(g(p,1_4)n(x)) + \sum_{x \in \mathfrak{X}_3} \Phi(gn(x)) + \Phi(g(p^{-1},1_4)).
		\end{equation*}
		Here, we are using the fact that $\Phi \in \mathcal{M}(\cG(\A),\sqrt{-1}r)$ is right invariant under $K_1$. Let $g = n(x_0)a_y$ with $x_0 \in \A^4$ and $y \in \R_+$. Let $a_p^\# \coloneqq \diag(p,1_4,p^{-1})$ embedded diagonally in $\cG(\Q)$. We will abuse the notation to denote $(1_\infty,\ldots,(p,1_4),\ldots)$ and $(1_\infty,\ldots,n(x),\ldots)$ by $(p,1_4)$ and $n(x)$ respectively, where the nontrivial terms are at the $p$-th place. Hence,
		\begin{equation*}
			(C_1^{(1)}\cdot \Phi)(n(x_0)a_y) = \sum_{x \in \mathfrak{X}_1}\Phi(n(x_0)a_y(p,1_4)n(x)) + \sum_{x \in \mathfrak{X}_3}\Phi(n(x_0)a_yn(x))+ \Phi(n(x_0)a_y(p^{-1},1_4)).
		\end{equation*}
		Note, 
		\begin{align*}
			\Phi(n(x_0)a_y(p,1_4)n(x)) = & \Phi(a^\#_{p^{-1}}n(x_0)a_y(p,1_4)n(x))\\
			=& \Phi(n(p^{-1}x_0)a_{p^{-1}y}(1_\infty,(p^{-1},1_4),\ldots,(1,1_4),(p^{-1},1_4),\ldots)n(x))\\
			=& \Phi(n(p^{-1}x_0)a_{p^{-1}y}n(x)(1_\infty,(p^{-1},1_4),\ldots,(1,1_4),(p^{-1},1_4),\ldots))\\
			=& \Phi(n(p^{-1}x_0)n(x)a_{p^{-1}y}).
		\end{align*}
		We obtain the last equality as $n(x)$ and $a_{p^{-1}y}$ commute, and $(1_\infty,(p^{-1},1_4),\ldots,(1,1_4),(p^{-1},1_4),\ldots)$ belongs to the maximal compact $K_f K_\infty$. By similar computation for other terms, we obtain
		\begin{align}\label{eqnn1}
			(C_1^{(1)}\cdot \Phi)(n(x_0)a_y) = & \sum_{x \in \mathfrak{X}_1}\Phi(n(p^{-1}x_0)n(x)a_{p^{-1}y}) + \sum_{x \in \mathfrak{X}_3}\Phi(n(x_0)n(x)a_y) + \Phi(n(p x_0)a_{p y}) \nonumber \\
			= & \sum_{\Q^4\setminus\{0\}}A_\lambda(1)(p^{-1}y)^2K_{ir}(4\pi\sqrt{Q_{A_0}(\lambda)}p^{-1}y)\sum_{x \in \mathfrak{X}_1} \Lambda(\prescript{t}{}{\lambda}{A_0}(p^{-1}x_0)_{p,x}) \nonumber\\
			& + \sum_{\Q^4\setminus\{0\}}A_\lambda(1)y^2K_{ir}(4\pi\sqrt{Q_{A_0}(\lambda)}y)\sum_{x \in \mathfrak{X}_3}\Lambda(\prescript{t}{}{\lambda}{A_0}(x_0)_{p,x}) \nonumber \\
			& + \sum_{\Q^4\setminus\{0\}}A_\lambda(1)(p y)^2K_{ir}(4\pi\sqrt{Q_{A_0}(\lambda)}p y)\Lambda(\prescript{t}{}{\lambda}{A_0}(p x_0)).
		\end{align}
		Here, $(p^{-1}x_0)_{p,x}$ is $p^{-1}x_{0,v}$ at all places $v \neq p$ and is $p^{-1}x_{0,p}+x$ at place $p$. Similarly, $(x_0)_{p,x}$ is $x_{0,v}$ at all places $v \neq p$ and $x_{0,p}+x$ at place $p$. Note,
		\begin{equation}
			\sum_{x \in \mathfrak{X}_1}\Lambda(\prescript{t}{}{\lambda}A_0 (p^{-1}x_0)_{p,x}) = \Lambda(\prescript{t}{}{\lambda}A_0(p^{-1}x_0))\sum_{x \in \mathfrak{X}_1}\Lambda(\prescript{t}{}{\lambda}A_0x) \label{35}
		\end{equation}
		with  the summation over $x \in \mathfrak{X}_1$ happening only at the $p$-th place. As $\Lambda$ is an additive character being summed over a group $\mathfrak{X}_1=\left\{x \in p^{-1}\O/ \O \right\}$, we get
		\begin{equation}
			\sum_{x \in \mathfrak{X}_1}\Lambda(\prescript{t}{}{\lambda}A_0x) = \begin{cases}
				p^4 & p^{-1}\lambda \in \O'; \\
				0 & \text{otherwise.}
			\end{cases}
		\end{equation}
		Similarly, 
		\begin{equation}
			\sum_{x \in \mathfrak{X}_3}\Lambda(\prescript{t}{}{\lambda}A_0((x_0)_{p,x})) = \Lambda(\prescript{t}{}{\lambda}A_0(x_0))\sum_{x \in \mathfrak{X}_3}\Lambda(\prescript{t}{}{\lambda}A_0x)
		\end{equation}
		being summed over  $\mathfrak{X}_3= \left\{x \in (\O' - \O)/ \O \right\}$. Note,
		\[\sum_{x \in \mathfrak{X}_3}\Lambda(\prescript{t}{}{\lambda}{A_0}x) = \sum_{x \in \O'/ \O}\Lambda(\prescript{t}{}{\lambda}{A_0}x)-1.\]
		Hence, using that $\Lambda$ is an additive character being summed over a group $\O'/\O$, we get
		\begin{equation}
			\sum_{x \in \mathfrak{X}_3}\Lambda(\prescript{t}{}{\lambda}{A_0}x) = \begin{cases}
				p^2-1  &  \lambda \in \O; \\
				-1 & \text{otherwise.}
			\end{cases} \label{38}
		\end{equation}
		
		Therefore, substituting \eqref{35}--\eqref{38} in \eqref{eqnn1} we get the formula for 
	for $A'_\lambda(1)$ as defined in the statement of the proposition. 
	\end{proof}
	To write the action of the Hecke operator in terms of Fourier coefficients given in Proposition \ref{Four-coeff-equal-prop}, we write $A_\lambda(1) = A(\beta)$ where $\beta = \prod\limits_{p | N}p^{u_p} n \beta_0$ as in the proposition. Note, for $\lambda \in \O'$ and $\beta \in \mathcal{O}'$ the conditions for $A_\lambda'(1)$ on $\lambda$ from Proposition \ref{He-Act} above translate to conditions on $\beta$ as follows:
	\begin{align*}
		\lambda \in p \O' \setminus \{0\} &\iff u_p \geq 1; \\
		\lambda \in \O \setminus p \O' & \iff u_p = 0, \delta_p = 1;\\
		\lambda \in \O'\setminus \O & \iff u_p = 0, \delta_p = 0.
	\end{align*}
	Then, as 
	\begin{equation*}
		A_{p\lambda}(1) = A(p \beta);  \quad A_{p^{-1}\lambda}(1) = A(p^{-1}\beta) 
	\end{equation*}
we can rewrite the $A_\lambda'(1)$ in terms of $\beta$ as
\begin{equation}\label{lam-beta}
	A_\lambda'(1) = \begin{cases}
		p^2A(p \beta) + (p^2-1)A(\beta) + p^2A(p^{-1}\beta) \qquad & \text{ if } u_p \geq 1;\\
		p^2A(p \beta) + (p^2-1)A(\beta) & \text{ if } u_p =0, \delta_p= 1;\\
		p^2A(p \beta) -A(\beta) & \text{ if } u_p =0, \delta_p = 0.
	\end{cases}
\end{equation}

	Let $f \in S(\Gamma_0(N), r)$ be a new form with Hecke eigenvalue $\lambda_p$ for the operator defined by the action of the double coset $\Gamma_0(N)\begin{bsmallmatrix}
		1& \\
		& p
	\end{bsmallmatrix}\Gamma_0(N)$ at prime $p$. Assuming it is an Atkin Lehner eigenform with eigenvalue $\epsilon_p$, it can be shown that 
	\begin{equation}
		\lambda_p = - \epsilon_p. \label{AL-HE}
	\end{equation}
	Using the single coset decomposition (\cite[Lemma 9.14]{Knp})
	\begin{equation*}
		\Gamma_0(N)\begin{bmatrix}
			1& \\
			& p
		\end{bmatrix}\Gamma_0(N) = \bigsqcup_{b=0}^{p-1}\Gamma_0(N)\begin{bmatrix}
			1 & b\\
			& p
		\end{bmatrix}
	\end{equation*}
	we have
	\begin{equation*}
		\sum_{b=0}^{p-1}f(\frac{z+b}{p}) = \lambda_p f(z). 
	\end{equation*}
	In terms of Fourier coefficients, using \eqref{AL-HE}, we get 
	\begin{equation*}
		c(p m) = \frac{\lambda_p}{p}c(m) = \frac{-\epsilon_p}{p}c(m) \quad \forall m \in \Z. 
	\end{equation*}
	Therefore, 
	$$c(m) = \frac{p}{-\varepsilon_p}c(p m) \quad \forall m \in \Z,$$ and 
	\begin{align}
		c\big(\frac{-Q_{A_0}(\beta)}{p^{t_p - 1}\prod\limits_{\substack{\ell|N\\ \ell\neq p}}\ell^{t_\ell-1}d^2}\big)\prod\limits_{\substack{\ell|N\\ \ell\neq p}}(-\varepsilon_\ell)^{t_\ell-1}(-\varepsilon_p)^{t_p-1} =& \left(\frac{p}{-\varepsilon_p}\right)^{t_p} c\big(\frac{-Q_{A_0}(\beta)}{p^{-1}\prod\limits_{\substack{\ell|N\\ \ell\neq p}}\ell^{t_\ell-1}d^2}\big)\prod\limits_{\substack{\ell|N\\ \ell\neq p}}(-\varepsilon_\ell)^{t_\ell-1} (-\epsilon_p)^{t_p -1} \nonumber\\
		= & p^{t_p} c\big(\frac{-Q_{A_0}(\beta)}{p^{-1}\prod\limits_{\substack{\ell|N\\ \ell\neq p}}\ell^{t_\ell-1}d^2}\big)\prod\limits_{\substack{\ell|N\\ \ell\neq p}}(-\varepsilon_\ell)^{t_\ell-1} (-\epsilon_p)^{-1}.  \label{c(pm)}
	\end{align}
Hence, as $(-\epsilon_p)^{-1} = -\epsilon_p$, we have
	\begin{equation}
		\sum_{t_\ell = 0}^{2u_\ell+\delta_\ell} \sum\limits_{d | n} c\big(\frac{-Q_{A_0}(\beta)}{\prod\limits_{\ell | N}\ell^{t_\ell-1}d^2}\big)\prod\limits_{\ell|N}(-\varepsilon_\ell)^{t_\ell-1} = \frac{p^{2u_p+\delta_p+1}-1}{p-1}  \sum\limits_{d | n} c\big(\frac{-Q_{A_0}(\beta)}{p^{-1}\prod\limits_{\substack{\ell|N\\ \ell\neq p}}\ell^{t_\ell-1}d^2}\big)\prod\limits_{\substack{\ell|N\\ \ell\neq p}}(-\varepsilon_\ell)^{t_\ell-1} (-\epsilon_p). \label{geom-sum-ell}
	\end{equation}
	\begin{theorem} \label{C11-EV}
		Let $f \in S(\Gamma_0(N), r)$ be a new form and eigenfunction of the Atkin Lehner involution with eigenvalue $\epsilon_p$ at each $p|N$. Let $F_f$ be the lift of $f$ defined in (\ref{FExp}). Then $F_f$ is a Hecke eigenform with 
		\begin{equation*}
			C_1^{(1)}\cdot F_f = (p^3 + p^2 + p -1)F_f.
		\end{equation*} 
	\end{theorem}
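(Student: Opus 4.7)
The plan is to show that $C_1^{(1)}$ acts by the scalar $p^3+p^2+p-1$ on every Fourier coefficient $A(\beta)$ of $F_f$. By Proposition~\ref{He-Act} together with the translation (\ref{lam-beta}) from $\lambda$ to $\beta = \prod_{\ell|N}\ell^{u_\ell}\,n\,\beta_0$, this reduces to checking the three identities
\[
p^2 A(p\beta) + (p^2-1)A(\beta) + p^2 A(p^{-1}\beta) = (p^3+p^2+p-1)A(\beta) \quad (u_p\ge 1),
\]
and the analogues (missing the $A(p^{-1}\beta)$ term, and with $(p^2-1)$ replaced by $-1$ in the second case) in the cases $u_p=0,\ \delta_p=1$ and $u_p=0,\ \delta_p=0$.

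The crucial input is the newform relation (\ref{c(pm)}), which propagates via the telescoping sum (\ref{geom-sum-ell}) to yield a factorization
\[
A(\beta) = \sqrt{Q_{A_0}(\beta)}\cdot\frac{p^{2u_p+\delta_p+1}-1}{p-1}\cdot M(\beta),
\]
where $M(\beta)$ collects only the sum over $d\mid n$ and over the variables $t_\ell$ for $\ell\ne p$. First I would establish this factorization, since it isolates the entire dependence of $A(\beta)$ on the prime $p$ into a single geometric factor plus a scaling of $M(\beta)$.

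Next I would compute the two ratios $A(p\beta)/A(\beta)$ and $A(p^{-1}\beta)/A(\beta)$. The key observation is that passing from $\beta$ to $p^{\pm1}\beta$ preserves the primitive part $\beta_0$ (hence preserves $\delta_p$ and $n$) and only shifts $u_p$ by $\pm 1$. Meanwhile $\sqrt{Q_{A_0}(\beta)}$ scales by $p^{\pm1}$, and every argument $c(\cdot)$ appearing inside $M(\beta)$ is scaled by $p^{\pm2}$, so a second application of (\ref{c(pm)}) gives $M(p\beta) = p^{-2}M(\beta)$ and $M(p^{-1}\beta) = p^{2}M(\beta)$. Writing $e := 2u_p+\delta_p+1$, I would thereby obtain
\[
\frac{A(p\beta)}{A(\beta)} = \frac{1}{p}\cdot\frac{p^{e+2}-1}{p^e-1}, \qquad \frac{A(p^{-1}\beta)}{A(\beta)} = p\cdot\frac{p^{e-2}-1}{p^e-1}.
\]

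The last step is to substitute these ratios into each of the three cases of (\ref{lam-beta}). For $u_p\ge 1$ the identity reduces to the polynomial equality
\[
p(p^{e+2}-1) + (p^2-1)(p^e-1) + p^3(p^{e-2}-1) = (p^3+p^2+p-1)(p^e-1),
\]
which is verified by direct expansion. The two boundary cases correspond to $e=2$ and $e=1$ and are dispatched by the one-line checks $p\cdot\tfrac{p^4-1}{p^2-1} + (p^2-1) = p^3+p^2+p-1$ and $p\cdot\tfrac{p^3-1}{p-1} - 1 = p^3+p^2+p-1$. The main obstacle is purely bookkeeping: the Fourier coefficient $A(\beta)$ is an intricate multi-sum, and the nontrivial step is to cleanly extract the factorization above and rigorously verify the scaling $M(p^{\pm1}\beta) = p^{\mp2}M(\beta)$ from the newform relation; once that is in place, the remaining algebra collapses to the polynomial identity displayed.
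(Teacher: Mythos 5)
Your proposal is correct and takes essentially the same route as the paper: reduce via (\ref{lam-beta}), isolate the $p$-dependence of $A(\beta)$ using the newform relation (\ref{c(pm)}) and the geometric-series factorization (\ref{geom-sum-ell}), and verify the resulting polynomial identity in $p^e$ (the paper writes out the three multi-sums explicitly and only treats $u_p\geq 1$ in detail). Extracting the common factor $M(\beta)$ and reducing to the ratios $A(p^{\pm1}\beta)/A(\beta)$ is slightly cleaner bookkeeping, but the underlying computation is the one the paper performs.
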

	\begin{proof} We shall prove the Hecke eigenvalue for the most general case of $\beta$ with $u_p \geq 1$. The proof for the cases $u_p = 0$ with $\delta_p \in \{0,1\}$ is similar and follows immediately after substituting for $u_p$ and $\delta_p$.
		Using \eqref{geom-sum-ell} and $Q_{A_0}(a\beta)= a^2Q_{A_0}(\beta)$, we have 
		\begin{align*}
		p^2A(p\beta) & = p^3 \sqrt{Q_{A_0}(\beta)} \sum\limits_{\substack{\ell | N \\ \ell \neq p}} \sum\limits_{t_\ell=0}^{2u_\ell+\delta_\ell}\frac{p^{2u_p+\delta_p+3}-1}{p-1}  \sum\limits_{d | n} c\big(\frac{-p^2Q_{A_0}(\beta)}{p^{-1}\prod\limits_{\substack{\ell|N\\ \ell\neq p}}\ell^{t_\ell-1}d^2}\big)\prod\limits_{\substack{\ell|N\\ \ell\neq p}}(-\varepsilon_\ell)^{t_\ell-1} (-\epsilon_p);\\
		A(\beta) & = \sqrt{Q_{A_0}(\beta)} \sum\limits_{\substack{\ell | N \\ \ell \neq p}} \sum\limits_{t_\ell=0}^{2u_\ell+\delta_\ell} \frac{p^{2u_p+\delta_p+1}-1}{p-1}  \sum\limits_{d | n} c\big(\frac{-Q_{A_0}(\beta)}{p^{-1}\prod\limits_{\substack{\ell|N\\ \ell\neq p}}\ell^{t_\ell-1}d^2}\big)\prod\limits_{\substack{\ell|N\\ \ell\neq p}}(-\varepsilon_\ell)^{t_\ell-1} (-\epsilon_p);\\
		p^2A(p^{-1}\beta) & = p \sqrt{Q_{A_0}(\beta)} \sum\limits_{\substack{\ell | N \\ \ell \neq p}} \sum\limits_{t_\ell=0}^{2u_\ell+\delta_\ell} \frac{p^{2u_p+\delta_p-1}-1}{p-1}  \sum\limits_{d | n} c\big(\frac{-p^{-2}Q_{A_0}(\beta)}{p^{-1}\prod\limits_{\substack{\ell|N\\ \ell\neq p}}\ell^{t_\ell-1}d^2}\big)\prod\limits_{\substack{\ell|N\\ \ell\neq p}}(-\varepsilon_\ell)^{t_\ell-1} (-\epsilon_p).
		\end{align*}
		Note, $A(p^{-1}\beta) = 0$ if $u_p =0$. By \eqref{c(pm)} and the fact that $(-\epsilon_p)^2 = 1$, we have
		\begin{align*}
			& p^2A(p\beta) + (p^2-1)A(\beta)+p^2A(p^{-1}\beta) \\
			& = p \sqrt{Q_{A_0}(\beta)} \sum\limits_{\substack{\ell | N \\ \ell \neq p}} \sum\limits_{t_\ell=0}^{2u_\ell+\delta_\ell}\frac{p^{2u_p+\delta_p+3}-1}{p-1}  \sum\limits_{d | n} c\big(\frac{-Q_{A_0}(\beta)}{p^{-1}\prod\limits_{\substack{\ell|N\\ \ell\neq p}}\ell^{t_\ell-1}d^2}\big)\prod\limits_{\substack{\ell|N\\ \ell\neq p}}(-\varepsilon_\ell)^{t_\ell-1} (-\epsilon_p)\\
			& \quad + (p^2-1) \sqrt{Q_{A_0}(\beta)} \sum\limits_{\substack{\ell | N \\ \ell \neq p}} \sum\limits_{t_\ell=0}^{2u_\ell+\delta_\ell} \frac{p^{2u_p+\delta_p+1}-1}{p-1}  \sum\limits_{d | n} c\big(\frac{-Q_{A_0}(\beta)}{p^{-1}\prod\limits_{\substack{\ell|N\\ \ell\neq p}}\ell^{t_\ell-1}d^2}\big)\prod\limits_{\substack{\ell|N\\ \ell\neq p}}(-\varepsilon_\ell)^{t_\ell-1} (-\epsilon_p)\\
			& \quad + p^3 \sqrt{Q_{A_0}(\beta)} \sum\limits_{\substack{\ell | N \\ \ell \neq p}} \sum\limits_{t_\ell=0}^{2u_\ell+\delta_\ell} \frac{p^{2u_p+\delta_p-1}-1}{p-1}  \sum\limits_{d | n} c\big(\frac{Q_{A_0}(\beta)}{p^{-1}\prod\limits_{\substack{\ell|N\\ \ell\neq p}}\ell^{t_\ell-1}d^2}\big)\prod\limits_{\substack{\ell|N\\ \ell\neq p}}(-\varepsilon_\ell)^{t_\ell-1}(-\epsilon_p)\\
			& = \sqrt{Q_{A_0}(\beta)} \sum\limits_{\substack{\ell | N \\ \ell \neq p}} \sum\limits_{t_\ell=0}^{2u_\ell+\delta_\ell} \frac{(p(p^{2u_p+\delta_p+3}-1)+(p^2-1)(p^{2u_p+\delta_p+1}-1)+p^3(p^{2u_p+\delta_p-1}-1))}{p-1}\\ 
			&\quad \times \sum\limits_{d | n} c\big(\frac{-Q_{A_0}(\beta)}{p^{-1}\prod\limits_{\substack{\ell|N\\ \ell\neq p}}\ell^{t_\ell-1}d^2}\big)\prod\limits_{\substack{\ell|N\\ \ell\neq p}}(-\varepsilon_\ell)^{t_\ell-1} (-\epsilon_p)\\
			& = (p^3+p^2+p -1)\sqrt{Q_{A_0}(\beta)} \sum\limits_{\substack{\ell | N \\ \ell \neq p}} \sum\limits_{t_\ell=0}^{2u_\ell+\delta_\ell} \frac{p^{2u_p+\delta_p+1}-1}{p-1} \sum\limits_{d | n} c\big(\frac{-Q_{A_0}(\beta)}{p^{-1}\prod\limits_{\substack{\ell|N\\ \ell\neq p}}\ell^{t_\ell-1}d^2}\big) \prod\limits_{\substack{\ell|N\\ \ell\neq p}}(-\varepsilon_\ell)^{t_\ell-1} (-\epsilon_p)\\
			&= (p^3+p^2+p -1)A(\beta).
		\end{align*}
		The result now follows from Proposition \ref{He-Act} and equation \eqref{lam-beta}.
	\end{proof}

\section{Non-vanishing of the lift}\label{non-van-sec}
In this section, we will obtain the non-vanishing of the map $f \to F_f$ constructed in Section \ref{theta-lift}. Let us start by observing that the proof of Lemma 4.5 of \cite{Mu-N-P} can be used to conclude that there exists $M > 0$ such that the Fourier coefficient $c(-M)$ of $f$ is non-zero. If $f$ is a Hecke eigenform, then this implies that $c(-1) \neq 0$. Using the explicit formula (\ref{Four-coeff-equal-eqn}) for the Fourier coefficients for $F_f$, we can see that in this case we get $A(1) \neq 0$. Hence, the map $f \to F_f$ is injective when restricted to Hecke eigenforms $f$. We will now prove the injectivity for all $f$.

Consider a basis of Hecke eigenforms $\{f_1, \cdots, f_k\}$ of $S(\Gamma_0(N), r)$. Since this is a finite set, we can find a prime $p \nmid N$ such that the Hecke eigenvalues $\lambda_p^{(i)}$ of $f_i$ for $i = 1, \cdots k$ satisfy $|\lambda_p^{(i)}| \neq |\lambda_p^{(j)}|$ for all $i \neq j$. This follows from Corollary 4.1.3 of \cite{Ra00}. Let $F_1, \cdots, F_k$ be the lifts of $f_1, \cdots, f_k$. By Theorem \ref{Ff-Eform}, we know that $F_i$ are Hecke eigenforms with eigenvalues $\mu_{p, 1, i} = p^2 \Big((\lambda_p^{(i)})^2 + p + p^{-1}\Big)$. Because of the choice of $p$, we again see that $\mu_{p, 1, i} \neq \mu_{p, 1, j}$ for all $i \neq j$. 

\begin{theorem}\label{non-vanishing-thm}
The map $f \to F_f$ is an injective map on $S(\Gamma_0(N), r)$.
\end{theorem}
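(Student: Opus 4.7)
\medskip

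The plan is to exploit the linearity of the lifting map $f \mapsto F_f$, which is immediate from the construction: the map $\mathcal L_D$ defined in (\ref{Vect-defn}) is $\mathbb C$-linear in $f$, and the theta-lift integral defining $\Phi_L$ is linear in $\mathcal L_D(f)$. Given a linear map between finite-dimensional spaces, injectivity on a basis implies injectivity on the whole space, so it suffices to exhibit a single basis of $S(\Gamma_0(N), r)$ whose images under $f \mapsto F_f$ are linearly independent. I would take the Hecke eigenbasis $\{f_1, \ldots, f_k\}$ described in the paragraph just above the theorem.

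With the prime $p \nmid N$ chosen via Corollary 4.1.3 of \cite{Ra00} so that $|\lambda_p^{(i)}| \neq |\lambda_p^{(j)}|$ for $i \neq j$, I would invoke the discussion preceding the theorem to know each $F_i := F_{f_i}$ is nonzero (via $c^{(i)}(-1) \neq 0$ and the formula (\ref{Four-coeff-equal-eqn}) at $\beta = 1$). By Theorem \ref{Ff-Eform}, $F_i$ is an eigenfunction of the Hecke operator $C_3^{(1)}$ at $p$ with eigenvalue
\[
\mu_{p,1,i} \;=\; p^2\bigl((\lambda_p^{(i)})^2 + p + p^{-1}\bigr).
\]
Since the Hecke eigenvalues of weight-zero Maass cusp forms are real and the map $x \mapsto x^2 + p + p^{-1}$ is injective on $[0, \infty)$, the separation condition $|\lambda_p^{(i)}| \neq |\lambda_p^{(j)}|$ transfers to pairwise distinctness of the $\mu_{p,1,i}$.

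Nonzero eigenvectors of a single linear operator belonging to pairwise distinct eigenvalues are linearly independent, so $\{F_1, \ldots, F_k\}$ is a linearly independent set in $\mathcal M(\Gamma, \sqrt{-1}r)$. Writing any $f \in S(\Gamma_0(N), r)$ uniquely as $f = \sum_i c_i f_i$ and using linearity to get $F_f = \sum_i c_i F_i$, the vanishing $F_f = 0$ forces every $c_i = 0$ and hence $f = 0$. Thus the main substantive ingredient is the existence of the separating prime $p$ provided by \cite{Ra00} together with the explicit Hecke eigenvalue formula in Theorem \ref{Ff-Eform}; the remaining steps are routine linear algebra, and I do not foresee an obstacle beyond what the excerpt has already recorded.
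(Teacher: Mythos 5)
Your proposal is correct and follows essentially the same route as the paper: Hecke eigenbasis, a separating prime $p\nmid N$ from Ramakrishnan's result, the explicit eigenvalue formula from Theorem~\ref{Ff-Eform} to get distinct $\mu_{p,1,i}$, and linear independence of the images. The only cosmetic difference is that you invoke directly the standard fact that nonzero eigenvectors with distinct eigenvalues are linearly independent, whereas the paper writes out the same conclusion via a Vandermonde determinant.
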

\begin{proof}
Let notations be as above the statement of the theorem. Suppose there exist complex numbers $c_1, \cdots, c_k$ such that $c_1 F_1 + \cdots + c_k F_k = 0$. Applying the Hecke operator $C_{3, p}^{(1)}$ $k-1$ times, we get
\begin{align*}
c_1 F_1 + c_2 F_2 + \cdots + c_k F_k &= 0\\
\mu_{p, 1, 1} c_1 F_1 + \mu_{p, 1, 2} c_2 F_2 + \cdots + \mu_{p, 1, k} c_k F_k &= 0 \\
\mu_{p, 1, 1}^2 c_1 F_1 + \mu_{p, 1, 2}^2 c_2 F_2 + \cdots + \mu_{p, 1, k}^2 c_k F_k &= 0 \\
\cdots &= \cdots \\
\mu_{p, 1, 1}^{k-1} c_1 F_1 + \mu_{p, 1, 2}^{k-1} c_2 F_2 + \cdots + \mu_{p, 1, k}^{k-1} c_k F_k &= 0
\end{align*}
This can be rewritten as
$$\begin{bmatrix} 1 & 1 & \cdots & 1 \\ \mu_{p, 1, 1} & \mu_{p, 1, 2} & \cdots & \mu_{p, 1, k} \\ \mu_{p, 1, 1}^2 & \mu_{p, 1, 2}^2 & \cdots & \mu_{p, 1, k}^2 \\ \cdots & \cdots & \cdots & \cdots \\ \mu_{p, 1, 1}^{k-1} & \mu_{p, 1, 2}^{k-1} & \cdots & \mu_{p, 1, k}^{k-1}\end{bmatrix} \begin{bmatrix} c_1 F_1 \\ c_2 F_2 \\ \cdots \\ \cdots \\ c_k F_k\end{bmatrix} = 0.$$
The matrix on the left hand side is a Vandermonde matrix, with determinant
$$\prod\limits_{1 \leq i < j \leq k} (\mu_{p, 1, i} - \mu_{p, 1, j}) \neq 0,$$
since all the $\mu_{p, 1, i}$'s are distinct. Hence the matrix is invertible, which implies that $c_i F_i = 0$ for all $i$. But all the $F_i$ are non-zero, so all the $c_i = 0$. This completes the proof of the theorem.
\end{proof}

\begin{remark} Here, without assuming that $f$ is a Hecke eigenform, we cannot get the non-vanishing as in \cite{Mu-N-P} only using the explicit formula (\ref{Four-coeff-equal-eqn}) for the Fourier coefficients of $F_f$. The reason is that even though we can find an integer $M > 0$ such that $c(-M) \neq 0$, there is no guarantee that, for an arbitrary maximal order $\O$, there exists $\beta \in \O'$ such that $Q_{A_0}(\beta) = M$. 
\end{remark}

	\section{CAP representation associated to the lift}\label{CAP-sec}
	Assume that $f \in S(\Gamma_0(N), r)$ is a newform, and let $F_f \in \mathcal{M}(\cG(\A), \sqrt{-1}r)$ be the corresponding lift defined in (\ref{FExp}). Let $\pi_F$ be the representation of $\cG(\A)$ generated by $F_f$.

%
%
%
%
	\subsection{Local components of the representation}
	
\subsubsection{The Archimedean component}
 Let
	\begin{equation*}
		N_\infty \coloneqq \{n(x) \mid x\in \R^4\}, \qquad A_\infty \coloneqq \{a_y \mid y\in \R^+ \}
	\end{equation*}
	for $n(x)$ and $a_y$ as defined in Section \ref{theta-lift-sec}. Let $\delta_s : A_\infty \rightarrow \C^\times$ be a quasi-character given by $\delta_s(y) = y^s$ for a parameter $s \in \C$. We can trivially extend $\delta_s$ to the parabolic subgroup $P_\infty$ with Langlands decomposition $P_\infty = N_\infty A_\infty M_\infty$ for 
	$M_\infty \coloneqq \Big\{\begin{psmallmatrix}
			1 & & \\
			& m & \\
			& & 1
		\end{psmallmatrix}\Big\vert m \in \cH(\R) \Big\}. $ We define the normalized parabolic induction induced from $\delta_s$ by $I_{P_\infty}^{G_\infty}(\delta_s)$. Proposition 5.5 of \cite{LNP} for $N = 4$ gives us
	
	\begin{proposition}\label{piF-infty}
		  The archimedean component of $\pi_F$ is isomorphic to $I_{P_\infty}^{G_\infty}(\delta_{\sqrt{-1}r})$ as admissible $G_\infty$ module, and irreducible. If $r$ is real, namely, $f$ satisfies the Selberg conjecture on the minimal eigenvalue of the hyperbolic Laplacian, $\pi_F$ is tempered at the archimedean place.    
	\end{proposition}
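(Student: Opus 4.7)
The plan is to reduce the statement to Proposition 5.5 of \cite{LNP} by matching our setup (with $N = 4$ in the notation of that paper, corresponding to the $4$-dimensional anisotropic part $A_0$) to the hypotheses used there. The key data determining the archimedean representation generated by $F_f$ are: right $K_\infty$-invariance, Casimir eigenvalue, and the archimedean shape of Fourier coefficients. All three are already established. Spherical invariance is built into Definition \ref{5dim-modforms-defn} and is preserved by the adelization in (\ref{FExp}). The Casimir eigenvalue is $\tfrac{1}{8}((\sqrt{-1}r)^2-4)=\tfrac{1}{8}(-r^2-4)$, matching that of the unramified character $\delta_{\sqrt{-1}r}$ of $A_\infty$ after normalization. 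The archimedean Whittaker factor $y^{2}K_{\sqrt{-1}r}(4\pi\sqrt{Q_{A_0}(\beta)}y)$ appearing in the Fourier expansion (\ref{Four-exp-defn}) is precisely the class-one Whittaker function of $I_{P_\infty}^{G_\infty}(\delta_{\sqrt{-1}r})$ on the rank-one group $\cG(\R)$ (split $\R$-rank one, since $A_0$ is positive definite forces $\cH(\R)$ to be compact).

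First I would carry out the identification: since $\pi_{F,\infty}$ is generated by a non-zero spherical vector with the prescribed infinitesimal character, it is isomorphic to the unique spherical subquotient of $I_{P_\infty}^{G_\infty}(\delta_{\sqrt{-1}r})$. Next, I would upgrade ``subquotient'' to an isomorphism with the full induced representation by showing that this spherical subquotient is in fact all of $I_{P_\infty}^{G_\infty}(\delta_{\sqrt{-1}r})$ for the parameter at hand; equivalently, one rules out degeneration to the trivial or a finite-dimensional constituent by observing that the Fourier coefficients $A(\beta)$ are genuinely non-zero on an infinite set of $\beta \in \O'$ (via Theorem \ref{non-vanishing-thm}), so the cuspidal generator $F_f$ is not annihilated by the usual intertwining operator. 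This rigidity of the spherical Whittaker model under the Casselman--Shalika type expansion forces $\pi_{F,\infty}\cong I_{P_\infty}^{G_\infty}(\delta_{\sqrt{-1}r})$ as admissible $(\mathfrak g, K_\infty)$-modules.

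To conclude irreducibility and temperedness, I would invoke the standard reducibility analysis of principal series for a split-rank-one orthogonal group: the representation $I_{P_\infty}^{G_\infty}(\delta_s)$ is reducible only at a discrete set of real parameters $s$ coming from poles of the long intertwining operator, and is irreducible and tempered whenever $s$ lies on the imaginary axis. Under the Selberg conjecture for $f$, the parameter $r$ is real, so $\sqrt{-1}r$ lies on the unitary axis, and both irreducibility and temperedness follow immediately.

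The main obstacle is the passage from ``spherical subquotient'' to ``full induced representation'', i.e.\ verifying that the spherical vector generated by $F_f$ is not trapped in a proper invariant subspace; the rest of the argument is essentially bookkeeping against Proposition 5.5 of \cite{LNP}. Since that proposition already performs this verification for $\OO(1,8n+1)$ with $n=1/2$ (equivalently $N=4$), the cleanest execution is to invoke it directly once the dictionary between notations has been set up.
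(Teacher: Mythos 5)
Your proposal matches the paper's own proof, which consists of a single appeal to Proposition 5.5 of \cite{LNP} with $N=4$ (that is, the $n=1/2$ case of $\OO(1,8n+1)$); this is exactly the dictionary you set up before deferring to that reference. One small imprecision in your internal sketch is worth flagging: in your final paragraph you derive irreducibility from $\sqrt{-1}r$ lying on the imaginary axis, which presupposes the Selberg conjecture, but Proposition \ref{piF-infty} asserts irreducibility of $\pi_\infty$ unconditionally. When Selberg fails, the exceptional-eigenvalue range for a Maass cusp form gives $r$ purely imaginary with $|r|<1$, so $\sqrt{-1}r$ is a real parameter with $|\sqrt{-1}r|<1$; irreducibility still holds because the rank-one spherical principal series of $\OO(1,5)(\R)$ first becomes reducible at the endpoint $|s|=\rho=2$, and one should say so. The Selberg hypothesis is needed only for temperedness, not irreducibility, and the proposition from \cite{LNP} already makes this separation.
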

Using Theorem 3.1 of \cite{NPS} and Proposition \ref{adelic-automorphy-thetalift}, we see that $\pi_F$ is irreducible. Since $F_f$ is a cusp form, we can conclude that $\pi_F$ is an irreducible, cuspidal representation of $\cG(\A)$. Hence, we can decompose $\pi_F = \otimes'_v \pi_v$, where $\pi_v$ is an irreducible, admissible representation of $\cG(\Q_v)$. We have obtained the description of $\pi_\infty$ above. Next we will describe $\pi_p$ for finite primes $p$. 	
	\subsubsection{Non-archimedean component: $p \nmid N$ case}
%
Let $p$ be a prime with $p\nmid N$. Let $\chi_1, \chi_2, \chi_3$ be unramified characters of $\Q_p^\times$. We get a character $\chi$ of the split torus of $\cG(\Q_p)$ via
$${\rm diag}(a_1, a_2, a_3, a_3^{-1}, a_2^{-1}, a_1^{-1}) \to \chi_1(a_1) \chi_2(a_2) \chi_2(a_3).$$
Extend this to a character of the minimal parabolic subgroup of $\cG(\Q_p)$ by setting it to be trivial on the unipotent radical. By unramified principal series representation of $\cG(\Q_p)$ we mean the normalized parabolic induction $I(\chi)$ of $\cG(\Q_p)$ induced from $\chi$, the character of the minimal parabolic group. 
	
The argument of the proof of \cite[Theorem 5.6]{LNP} works also for our setting. From Theorem \ref{Ff-Eform} we thus deduce the following:
\begin{proposition}\label{nontemp-unram}
For primes $p \nmid N$, the local component $\pi_p$ of $\pi_F$ is the spherical constituent of  the unramified principal series representation $I(\chi)$ of $\cG(\Q_p)$ where the character $\chi$ corresponds to the three unramified characters $\chi_1, \chi_2, \chi_3$ given by
$$\chi_1(\varpi_p) = \left(\frac{\lambda_p+\sqrt{\lambda_p^2-4}}{2}\right)^2, \chi_2(\varpi_p) = p, \chi_3(\varpi_p) = 1.$$
Here, $\varpi_p$ is an uniformizer in $\Q_p$. Hence, $\pi_p$ is non-tempered for every $p \nmid N$.
\end{proposition}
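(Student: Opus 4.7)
The plan is to follow the strategy of \cite[Theorem 5.6]{LNP}, which carries over to our setting essentially unchanged. The key observation is that the adelic Fourier expansion in (\ref{FExp}) shows that $F_f$ is right $K_p$-invariant for every prime $p \nmid N$, so the local representation $\pi_p$ contains a nonzero spherical vector. Since $\pi_p$ is irreducible and admissible (by the decomposition of $\pi_F$ as a restricted tensor product), it must be isomorphic to the unique spherical constituent of some unramified principal series $I(\chi)$, where $\chi$ is determined by three unramified characters $\chi_1,\chi_2,\chi_3$ of $\Q_p^\times$ via the parameterization of the split torus of $\cG(\Q_p)\simeq \OO(3,3)(\Q_p)$ recalled before the proposition. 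The task is then to pin down $\chi_i(\varpi_p)$.

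To identify the Satake parameters, the plan is to invoke the Satake isomorphism: for the unramified principal series $I(\chi)$, each Hecke operator $C_3^{(i)}$ acts on the spherical vector by a symmetric Laurent polynomial in $a_i\coloneqq\chi_i(\varpi_p)$ (with the appropriate normalization by the modulus character), and these polynomials are written out explicitly in \cite[Section 7]{Sug}. Next I would substitute the proposed values $a_1=\bigl((\lambda_p+\sqrt{\lambda_p^2-4})/2\bigr)^2$, $a_2=p$, $a_3=1$ into Sugano's formulas and check that the resulting eigenvalues coincide with $\mu_1,\mu_2,\mu_3$ produced in Theorem \ref{Ff-Eform}. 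The matching for $\mu_1$ is the key sanity check: writing $\alpha+\alpha^{-1}=\lambda_p$ with $\alpha=(\lambda_p+\sqrt{\lambda_p^2-4})/2$, one has $a_1+a_1^{-1}=\alpha^2+\alpha^{-2}=\lambda_p^2-2$, so the symmetric combination that represents $\mu_1$ in Sugano's formula reduces to $p^2(\lambda_p^2-2)+p^2(p+p^{-1})=p^2(\lambda_p^2+p+p^{-1})$, as desired. The analogous computations for $\mu_2,\mu_3$ will require carefully tracking the factors $|R_2^{(i-1)}|$ and $f_{3,1}$ against the corresponding terms in Sugano's spherical function formula. Because the three generators $C_3^{(i)}$ generate the full Hecke algebra $\mathcal{H}_p$, equality of their eigenvalues on $F_f$ and on the spherical vector of $I(\chi)$ with these $a_i$ uniquely determines the Weyl-orbit of $(a_1,a_2,a_3)$, which in turn uniquely determines the spherical constituent.

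For non-temperedness, the plan is simply to observe that the temperedness of the spherical constituent of $I(\chi)$ is equivalent (by Harish-Chandra/Macdonald) to $|a_i|=1$ for $i=1,2,3$. Since $a_2=\chi_2(\varpi_p)=p$ has absolute value $p>1$, this condition fails, so $\pi_p$ is non-tempered. This gives the last assertion for every finite prime $p \nmid N$.

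The main obstacle I anticipate is purely bookkeeping: writing Sugano's general formulas for the eigenvalues of $C_3^{(i)}$ on the unramified vector in the normalization used in Section \ref{Hecke-theory-sec} of this paper (with the modulus character and the particular choice of torus coordinates from Section \ref{oth-gp-modfm-section}) and comparing them to the expressions for $\mu_2$ and $\mu_3$ in Theorem \ref{Ff-Eform}. Since the analogous comparison was already carried out successfully in the proof of \cite[Theorem 5.6]{LNP}, and our Hecke data at $p\nmid N$ is formally identical (only the anisotropic part changes, which is invisible to the split-rank computation), the same manipulations should apply verbatim; thus no genuinely new ingredient beyond Theorem \ref{Ff-Eform} is required.
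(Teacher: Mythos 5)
Your proposal follows essentially the same route as the paper, which simply invokes the argument of \cite[Theorem 5.6]{LNP} together with Theorem \ref{Ff-Eform}: identify $\pi_p$ as the spherical constituent of an unramified principal series, match Hecke eigenvalues against Sugano's explicit formulas, and read off non-temperedness from $|\chi_2(\varpi_p)|=p>1$. One small arithmetic slip worth fixing in your sanity check: the displayed equality $p^2(\lambda_p^2-2)+p^2(p+p^{-1})=p^2(\lambda_p^2+p+p^{-1})$ is false as written (the two sides differ by $2p^2$); you omitted the contribution $a_3+a_3^{-1}=2$ from the third Satake parameter, and the correct computation is $p^2\bigl[(a_1+a_1^{-1})+(a_2+a_2^{-1})+(a_3+a_3^{-1})\bigr]=p^2\bigl[(\lambda_p^2-2)+(p+p^{-1})+2\bigr]=p^2(\lambda_p^2+p+p^{-1})$, which indeed matches $\mu_1$ from Theorem \ref{Ff-Eform}.
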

%
%

	\subsubsection{Non-archimedean component: $p | N$ case}

Let $p$ be a prime with $p|N$. For an unramified character $\chi$ of $\Q_p^\times$, we get a character of the torus of $\cG(\Q_p)$ via
$${\rm diag}(y, 1, 1, 1, 1, y^{-1}) \to \chi(y).$$
We can extend this to a character of the maximal parabolic subgroup $P$ by setting it to be trivial on the unipotent radical. The modulus character is given by
$$\delta_P(a_y n(x)) = |y|^4.$$
Define the normalized unramified principal series $I(\chi)$ consisting of all smooth functions $f : \cG(\Q_p) \to \C$ satisfying
$$f(a_y n(x) g) = |y|^2 \chi(y) f(g) \quad \text{ for all } y \in \Q_p^\times, x \in \Q_p^4, g \in \cG(\Q_p).$$
If $f_1$ is an unramified vector in $I(\chi)$, then the Hecke operator $C_1^{(1)}$ acts on $f_1$ by a constant. To obtain the constant, using Lemma \ref{C11-dcmp}, we see that 
\begin{align}
\Big(C_1^{(1)} f_1\Big)(1) &= \int_{\cG(\Q_p)} \text{char}_{K_1c_1^{(1)}K_1}(x) f_1(x)dx \nonumber \\
&= \sum_{x \in \mathfrak{X}_1} f_1(a_p n(x)) + \sum_{x \in \mathfrak{X}_1} f_1(n(x)) + f_1(a_{p^{-1}}) \nonumber\\
&= p^4 |p|^2 \chi(p) f_1(1) + (p^2-1) f_1(1) + |p^{-1}|^2 \chi(p^{-1}) f_1(1)\nonumber \\
&= \big(p^2 \chi(p) + p^2 -1 + p^2 \chi(p^{-1})\big) f_1(1). \label{unram-Hecke-const}
\end{align}


\begin{proposition}\label{nontemp-ram}
Let $p | N$. The local representation $\pi_p$ is the spherical constituent of the unramified principal series $I(\chi)$ with $\chi(\varpi_p) = p^{\pm 1}$. The representation $\pi_{p}$ is non-tempered. 
\end{proposition}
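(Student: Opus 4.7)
The plan is to combine the Hecke eigenvalue computation in Theorem \ref{C11-EV} with the explicit formula \eqref{unram-Hecke-const} for the action of $C_1^{(1)}$ on an unramified vector in $I(\chi)$, and then read off $\chi(\varpi_p)$ by matching constants.

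First I would observe that since $F_f$ is right $K$-invariant by construction (cf.\ the adelic lift defined in \eqref{FExp}), the local component $\pi_p$ has a nonzero $K_p$-fixed vector, hence is spherical. For $p\mid N$, the group $\cG(\Q_p)$ has $\Q_p$-split rank one (the Levi $\cH(\Q_p)$ is anisotropic because it is the orthogonal group of a definite quaternionic norm form over $\Q_p$), so the unramified principal series are parametrized by unramified characters $\chi$ of $\Q_p^{\times}$ via $\chi(\varpi_p)\in\C^{\times}$. By the Borel--Casselman classification, $\pi_p$ must be the spherical constituent of some such $I(\chi)$.

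To pin down $\chi$, recall from \eqref{unram-Hecke-const} that $C_1^{(1)}$ acts on the spherical vector of $I(\chi)$ by the scalar
\[
p^{2}\chi(\varpi_p) + p^{2} - 1 + p^{2}\chi(\varpi_p)^{-1}.
\]
On the other hand, Theorem~\ref{C11-EV} gives that $C_1^{(1)}\cdot F_f = (p^{3}+p^{2}+p-1)F_f$. Setting $t=\chi(\varpi_p)$ and equating these two scalars yields
\[
p^{2}t + p^{2}t^{-1} = p^{3} + p, \qquad \text{i.e.\ } t + t^{-1} = p + p^{-1},
\]
whose only solutions are $t = p$ or $t = p^{-1}$. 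This is exactly the claim $\chi(\varpi_p)=p^{\pm 1}$.

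Finally, for non-temperedness I would note that $|\chi(\varpi_p)| = p^{\pm 1} \neq 1$, so $\chi$ is a non-unitary character. The Satake parameter of the spherical constituent therefore lies off the unit circle, which forces $\pi_p$ to be non-tempered (its matrix coefficients fail to be in $L^{2+\varepsilon}$ of the relevant quotient). The only point requiring some care is the very first step, namely justifying that $\pi_p$ is irreducible and spherical as a local component of the global $\pi_F$; for this I would invoke the already-established irreducibility of $\pi_F$ (via Theorem 3.1 of \cite{NPS} and Proposition \ref{adelic-automorphy-thetalift} as cited above) together with the factorization $\pi_F=\otimes'_v \pi_v$. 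Everything else is a direct scalar comparison.
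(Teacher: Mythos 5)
Your proof is correct and follows essentially the same route as the paper: spherical because $F_f$ is $K_p$-fixed, pin down $\chi(\varpi_p)=p^{\pm1}$ by matching the eigenvalue from Theorem~\ref{C11-EV} against (\ref{unram-Hecke-const}), and conclude non-temperedness from the non-unitary Satake parameter. The one place you gloss over is non-temperedness, where the paper notes that its earlier criterion \cite[Theorem 5.2]{LNP} requires $m\ge 2$ (false here) and therefore explicitly verifies divergence of the $L^{2+\epsilon}$-integral of the spherical matrix coefficient over $\bigsqcup_{m\in\Z}(p^m,1_4)K_1$; your appeal to the general principle for rank-one groups is valid but should be justified along these lines.
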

\begin{proof}
$F_f$ is right invariant under the maximal compact $K_p$. Hence, $\pi_p$ is the spherical constituent of an unramified principal series. Comparing (\ref{unram-Hecke-const}) with the Hecke eigenvalue from Theorem \ref{C11-EV} we get 
\begin{equation*}
	p^3 + p^2 + p -1 = p^2 \chi_1(\varpi_p) + p^2 - 1 + p^2\chi_1^{-1}(\varpi_p)
\end{equation*} 
implying
\begin{equation*}
	\chi_1(\varpi_p) = p \text{ or } p^{-1}.
\end{equation*}
Let us show that $\pi_{p}$ is non-tempered. We remark that \cite[Theorem 5.2]{LNP} is not applicable to this case since the assumption ``$m\ge 2$'' does not hold. If $\pi_{p}$ is tempered the matrix coefficient $\langle\pi_{p}(g)v_0,v_0\rangle$ with a spherical vector $v_0$ should belong to $L^{2+\epsilon}(\cG(\Q_{p})$ for any $\epsilon>0$. 
However, calculate the integral of $|\langle\pi_{p}(g)v_0,v_0\rangle|^{2+\epsilon}$ over the open domain of $\cG(\Q_p)$ as follows:
\[
\sqcup_{m\in\Z}({p}^m,1_4)K_1.
\]
This equals to $\sum_{m\in\Z}{p}^{-(2+\epsilon)-2m}|\langle v_0,v_0\rangle|^{2+\epsilon}$, which is divergent, as required.
\end{proof}

	\subsection{Cuspidal representation generated by $F_f$ and its CAP property}
	Following the description of the local components, we can now state the result for the explicit determination of the cuspidal representation generated by $F_f$. 
	
	\begin{theorem}\label{explicit-cusp-rep}
	Let $f$ be a new form in $S(\Gamma_0(N), r)$ and let $\pi_F$ be the cuspidal representation generated by $F_f$. Then,
	\begin{enumerate}
	\item  $\pi_F$ is irreducible and decomposes into the restricted tensor product $\pi_F = \otimes'_{v\leq \infty}\pi_v$ of irreducible admissible representations $\pi_v$ of $\cG(\Q_v)$.
	
	\item For $v = p < \infty$, if $p \nmid N$ then $\pi_p$ is the spherical constituent of the unramified principal series representation of $\cG_p$ with the Satake parameters
	\begin{equation*}
		\diag\left(\left(\frac{\lambda_p+\sqrt{\lambda_p^2-4}}{2}\right)^2,p,1,1,p^{-1},\left(\frac{\lambda_p+\sqrt{\lambda_p^2-4}}{2}\right)^{-2}\right).
	\end{equation*}

	\item For $v = p < \infty$, if $p \mid N$ then $\pi_p$ is the spherical constituent of the parabolic induction $I(\chi)$ of $\cG(\Q_p)$ defined by 
	\begin{equation*}
		\chi(p)=p.
	\end{equation*}

	\item For every finite prime $p$, $\pi_p$ is non-tempered. 
Suppose that the Selberg conjecture holds for $f$, namely $r$ is a real number for the Laplace eigenvalue for $f$. Then $\pi_\infty$ is tempered. 
	\end{enumerate}
	\end{theorem}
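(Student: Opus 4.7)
The plan is essentially assembly: Propositions \ref{piF-infty}, \ref{nontemp-unram}, and \ref{nontemp-ram} already determine the local components at every place, so the main remaining task is to upgrade the $K$-invariant cusp form $F_f$ into an honest irreducible cuspidal automorphic representation and then collect the pieces.

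First I would treat part (i). Since $F_f$ is adelized in Proposition \ref{adelic-automorphy-thetalift} and shown to be a cusp form via Proposition \ref{cuspidality-prop}, the representation $\pi_F$ lies in the cuspidal spectrum of $\cG(\A)$. For irreducibility I would invoke Theorem 3.1 of \cite{NPS}, a criterion whose hypotheses are furnished by Theorems \ref{Ff-Eform} and \ref{C11-EV}: $F_f$ is right $K$-invariant and is an eigenvector of the full spherical Hecke algebra at every finite prime, with explicitly computed eigenvalues. Once $\pi_F$ is irreducible, Flath's theorem delivers the restricted tensor product decomposition $\pi_F=\otimes'_v\pi_v$ into irreducible admissible local representations.

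For parts (ii)–(iv), I would appeal to the local analyses already carried out. Part (ii) is Proposition \ref{nontemp-unram}: the Hecke eigenvalues from Theorem \ref{Ff-Eform}, via the standard translation between Hecke eigenvalues and Satake parameters on the split group, pin down $\pi_p$ (for $p\nmid N$) as the spherical constituent of the unramified principal series with the displayed Satake parameter. Part (iii) is Proposition \ref{nontemp-ram}: matching the eigenvalue $p^3+p^2+p-1$ of $C_1^{(1)}$ from Theorem \ref{C11-EV} with the formula (\ref{unram-Hecke-const}) forces $\chi(\varpi_p)=p^{\pm 1}$, and both choices yield the same spherical constituent. The non-temperedness at finite places in (iv) follows from the explicit Satake parameters violating the Ramanujan bound at $p\nmid N$, and from the $L^{2+\epsilon}$-divergence of the spherical matrix coefficient sketched in Proposition \ref{nontemp-ram} at $p\mid N$; temperedness at infinity under the Selberg conjecture is Proposition \ref{piF-infty}.

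The most delicate step is the irreducibility in (i): one must ensure that the Hecke eigenvalue data plus $K$-invariance really isolate a single irreducible constituent of $\pi_F$, ruling out the possibility that $\pi_F$ decomposes nontrivially into pieces all sharing the same spherical Hecke data. This is precisely the content of \cite[Theorem 3.1]{NPS}, which is the only nonroutine hinge in the assembly; everything else in the theorem is either a direct citation of a local proposition already proved in this section, or a standard consequence of Flath's theorem applied to an irreducible cuspidal representation.
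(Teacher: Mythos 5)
Your assembly is essentially the paper's: parts (ii)--(iv) are read off directly from Propositions~\ref{nontemp-unram}, \ref{nontemp-ram} and \ref{piF-infty}, irreducibility in (i) comes from Theorem~3.1 of \cite{NPS}, and the tensor product decomposition then follows from Flath's theorem. The proof in the paper is, literally, ``This follows from Proposition~\ref{piF-infty}, Proposition~\ref{nontemp-unram} and Proposition~\ref{nontemp-ram},'' with the irreducibility/factorization discussion placed in the text just before the theorem.

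One point worth correcting, though it does not invalidate the proof. You describe the hypotheses of \cite[Theorem~3.1]{NPS} as being supplied by Theorems~\ref{Ff-Eform} and \ref{C11-EV} (Hecke eigenform property at all finite primes). The paper instead feeds the irreducibility criterion with Proposition~\ref{adelic-automorphy-thetalift} (adelic cuspidality and $K$-finiteness of $F_f$) together with Proposition~\ref{piF-infty} (irreducibility of the $(\mathfrak g,K_\infty)$-module generated at the archimedean place). The criterion in \cite{NPS} is local-to-global: irreducibility of the representation generated by the form at a single place, combined with automorphy, already forces global irreducibility, so the finite Hecke eigenvalue data is not what drives (i). The Hecke computations from Theorems~\ref{Ff-Eform} and \ref{C11-EV} enter only afterwards, to pin down the Satake parameters in parts (ii) and (iii). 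This is a small mislabelling of which inputs do what; the conclusion and the cited reference are correct.
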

	\begin{proof}
	This follows from Proposition \ref{piF-infty}, Proposition \ref{nontemp-unram} and Proposition \ref{nontemp-ram}.
	\end{proof}
We now review the definition of a CAP representation from \cite[Definition 6.6]{Mu-N-P}.
\begin{definition}\label{CAP-def}
Let $G_1$ and $G_2$ be two reductive algebraic groups over a number field $F$ such that $G_{1,v} \simeq G_{2,v}$ for almost all places $v$, where $G_{i,v}=G_i(F_v)~(i=1,2)$ is the group of $F_v$-points of $G_i$ for the local field $F_v$ at $v$. Let $P_2$ be a parabolic subgroup of $G_2$ with Levi decomposition $P_2 = M_2 N_2$. An irreducible cuspidal automorphic representation $\pi = \otimes'_v \pi_v$ of $G_1(\A)$ is called {\it cuspidal associated to parabolic} (CAP) $P_2$, if there exists an irreducible cuspidal automorphic representation $\sigma$ of $M_2$ such that $\pi_v \simeq \pi_v'$ for almost all places $v$, where $\pi' = \otimes'_v \pi_v'$ is an irreducible constituent of ${\rm Ind}_{P_2(\A)}^{G_2(\A)}(\sigma)$.
\end{definition}

For our case $G_1=\cG=\OO(1,5)$ and $G_2=\OO(3,3)$. We have $G_{1,p} = G_{2,p}$ for all $p \nmid N$. 
Let $\sigma$ be a cuspidal representation of $\GL_2$ generated by a Maass cusp form $f$ with the trivial central character. Assume that $f$ is a new form. 
We want to regard the representation $|\det|_{\bA}^{-1/2}\sigma\times|\det|_{\bA}^{1/2}\sigma$ of $\GL_2(\bA)\times \GL_2(\bA)$~(cf.{\cite[Section 6.2]{Mu-N-P}) as the representation of $\bA^{\times}\times \OO(2,2)(\A)$, which is isomorphic to a Levi subgroup of a maximal parabolic subgroup $P(\bA)$ of $\OO(3,3)(\bA)$. Recall that our previous work \cite{Mu-N-P} introduced the parabolic induction from the representation $|\det|_{\bA}^{-1/2}\sigma\times|\det|_{\bA}^{1/2}\sigma$ of $\GL_2(\bA)\times \GL_2(\bA)$ to discuss the CAP property of our lifting for the case of $d_B=2$ in the setting of $\GL_2$ over $B$. In the present setting we consider the parabolic induction from the aforementioned representation of $\bA^{\times}\times \OO(2,2)(\bA)$ instead and can show that $\pi_F$ is a CAP representation attached to this parabolic induction. 

To see this we start with recalling the following two isomorphisms~(cf.~Section \ref{accidental-isom})
\[
\GL_2\times \GL_2/\{(z,z)\mid z\in \GL_1\}\simeq \GSO(2,2),\quad \GO(2,2)=\GSO(2,2)\rtimes\langle t\rangle.
\]
We now note that the representation $|\det|_{\bA}^{-1/2}\sigma\times|\det|_{\bA}^{1/2}\sigma$ of $\GL_2(\bA)\times \GL_2(\bA)$ can be regarded as the representation of $\GSO(2,2)(\A)$ since $\sigma$ has the trivial central character. We construct a representation of $\GO(2,2)(\bA)$ by considering its induced representation from $\GSO(2,2)(\bA)$ to $\GO(2,2)(\bA)$.  Furthermore consider the pull-back of the representation of $\GO(2,2)(\bA)$ to $\bA^{\times}\times \OO(2,2)(\bA)$ via the surjection $\bA^{\times}\times \O(2,2)(\bA)\rightarrow \GO(2,2)(\bA)$. 
We denote the resulting representation simply by $\sigma$ and introduce the normalized parabolic induction ${\rm Ind}_{P(\bA)}^{\OO(3,3)(\bA)}\sigma$, where $P$ is the maximal parabolic subgroup with Levi subgroup isomorphic to $\GL(1)\times \OO(2,2)$ and the abelian unipotent radical. Then we have the following:
\begin{proposition}\label{CAP-forms}
Let $\pi_F$ be as above and recall that we have assumed that the Maass cusp form $f$ is a new form. 
The cuspidal representation $\pi_F$ is CAP to the parabolic induction ${\rm Ind}_{P(\bA)}^{\OO(3,3)(\bA)}\sigma$.
\end{proposition}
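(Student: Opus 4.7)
The plan is to verify the CAP property by matching Satake parameters at every prime $p\nmid d_B$, which forms a set of almost all places. Given any global irreducible constituent $\pi'=\otimes'_v\pi_v'$ of ${\rm Ind}_{P(\bA)}^{\OO(3,3)(\bA)}\sigma$ whose local factor at each $p\nmid d_B$ is the spherical constituent of the local induction, the task (by Definition \ref{CAP-def}) is to check that $\pi_p\simeq \pi_p'$ at all such $p$. The accidental isomorphism of Section \ref{accidental-isom} identifies $\cG(\Q_p)$ with $\OO(3,3)(\Q_p)$ at each $p\nmid d_B$, so both $\pi_p$ and $\pi_p'$ live on the same group and may be compared via their Satake parameters.

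To extract the Satake parameter of $\pi_p'$, I would transfer the picture to $\GL_4$ through the chain of accidental isomorphisms of Section \ref{accidental-isom}. Under $(\GL_4\times \GL_1)/\{(z\cdot 1_4,z^{-2})\}\simeq \GSO(3,3)$, the maximal parabolic $P\subset \OO(3,3)$ with Levi $\GL_1\times \OO(2,2)$ corresponds to the $(2,2)$-parabolic $P_{2,2}$ of $\GL_4$ with Levi $\GL_2\times \GL_2$. The representation $\sigma$, built by descending $|\det|_{\bA}^{-1/2}\sigma\boxtimes|\det|_{\bA}^{1/2}\sigma$ from $\GL_2(\bA)\times \GL_2(\bA)$ to $\GSO(2,2)(\bA)$ (which is possible because the two central characters are inverses of each other), then extending to $\GO(2,2)(\bA)$ and pulling back to $\bA^\times\times \OO(2,2)(\bA)$, matches the inducing data for $\Pi$ in the theorem of the introduction. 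Under the standard dictionary transferring Satake parameters from $\GL_4$ to $\GSO(3,3)$, which is given by the exterior-square representation $\wedge^2\colon \GL_4(\C)\to \GL_6(\C)$, the Satake parameter of $\pi_p'$ is therefore the $\wedge^2$-image of
\[
\diag\bigl(\alpha_p p^{-1/2},\alpha_p^{-1} p^{-1/2},\alpha_p p^{1/2},\alpha_p^{-1} p^{1/2}\bigr),\qquad \alpha_p\coloneqq \frac{\lambda_p+\sqrt{\lambda_p^2-4}}{2}.
\]

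A direct computation of $\wedge^2$ on this diagonal matrix yields the six eigenvalues $\{p^{-1},\alpha_p^2,1,1,\alpha_p^{-2},p\}$, which coincides (up to permutation) with the Satake parameter $\diag\bigl(\alpha_p^2,p,1,1,p^{-1},\alpha_p^{-2}\bigr)$ of $\pi_p$ recorded in Theorem \ref{explicit-cusp-rep}(ii). Since unramified spherical representations of the split group $\OO(3,3)(\Q_p)$ are determined by their Satake class modulo the Weyl group, this forces $\pi_p\simeq \pi_p'$ for every $p\nmid d_B$, completing the verification of Definition \ref{CAP-def}.

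The main obstacle will be the bookkeeping in the second paragraph: one must check that the composite of descent from $\GL_2\times \GL_2$ to $\GSO(2,2)$, extension to $\GO(2,2)$, pull-back to $\bA^\times\times \OO(2,2)(\bA)$, and parabolic induction from $P$ inside $\OO(3,3)$ really does correspond, under the accidental isomorphism, to the single parabolic induction from $P_{2,2}$ in $\GL_4$ of $|\det|_{\bA}^{-1/2}\sigma\boxtimes|\det|_{\bA}^{1/2}\sigma$, and that the dual-group side of this correspondence is implemented by the exterior square. Once this compatibility is made precise, the Satake-parameter computation and the comparison with Theorem \ref{explicit-cusp-rep}(ii) become essentially mechanical.
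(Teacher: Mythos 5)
Your proposal is correct and follows the same basic strategy as the paper — match Satake parameters of $\pi_F$ with those of a constituent of ${\rm Ind}_{P(\bA)}^{\OO(3,3)(\bA)}\sigma$ at all $p\nmid d_B$ by routing through the $\GL_4$ accidental isomorphism — but you organize the transfer differently. You identify the transfer on Satake parameters as the exterior square $\wedge^2\colon\GL_4(\C)\to\GL_6(\C)$, which is a clean conceptual description; the paper instead writes down the same map explicitly as $\diag(x_1,x_2,x_3,x_4)\mapsto\diag(x_1x_2,x_1x_4,x_1x_3,x_2x_4,x_2x_3,x_3x_4)$ on the split tori, which is precisely $\wedge^2$ on the diagonal. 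Your computation of the six products and the resulting agreement with Theorem \ref{explicit-cusp-rep}(ii) up to the Weyl group is the same as the paper's.

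The one structural difference is how the two proofs handle the parabolic induction ${\rm Ind}_{P(\bA)}^{\OO(3,3)(\bA)}\sigma$. You propose to show directly that, under the accidental isomorphism, $P$ (Levi $\GL_1\times\OO(2,2)$) corresponds to $P_{2,2}$ (Levi $\GL_2\times\GL_2$) and that the descended $\sigma$ matches the inducing data for $\Pi$, and then apply $\wedge^2$ once at the level of $\GL_4$; you flag this correspondence as the ``main obstacle'' needing bookkeeping. The paper sidesteps that verification entirely: it computes the Satake parameter of $\sigma$ as a representation of the Levi $\GL_1\times\OO(2,2)$ directly, by pushing the Satake parameters of $|\det|^{-1/2}\sigma\times|\det|^{1/2}\sigma$ through the separate accidental isomorphism $(\GL_2\times\GL_2)/\GL_1\simeq\GSO(2,2)$ (giving $\diag(p,p\alpha_p^2,p\alpha_p^{-2},p)$, with $\diag(p,p,p,p)$ interpreted as the similitude character) and then assembles the Satake parameter of the induced representation from the Levi datum in the usual way, obtaining $\diag(p,1,\alpha_p^2,\alpha_p^{-2},1,p^{-1})$. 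This avoids having to prove the parabolic-to-parabolic correspondence that your approach requires. Both routes are valid; the paper's is self-contained while yours pushes the compatibility check to an unverified assertion.
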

\begin{proof}
We first review the accidental isomorphism $(\GL_4\times \GL_1)/\{(z\cdot 1_4,z^{-2})\mid z\in \GL_1\}\simeq \GSO(3,3)$ (see ~Section \ref{accidental-isom}). The restriction of this isomorphism to the $\GL_4$-factor gives rise to the isomorphism of the maximal split tori of the $\GL_4$-factor  and $\SO(3,3)$ induced by 
\[
\diag(x_1,x_2,x_3,x_4)\mapsto\diag(x_1x_2,x_1x_4,x_1x_3,x_2x_4,x_2x_3,x_3x_4)\quad(x_i\in \GL_1,~1\le i\le 4),
\]
where note that $\SO(3,3)=\{(g,z)\in \GSO(3,3)\mid \det(g)z^2=1\}$~(cf.~\cite[Section 3]{GT}). 
In \cite[Section 6.1 (6.6),~Theorem 6.7]{Mu-N-P}, for the $\GL_4$-setting, we have $\diag(a_1,a_2,a_3,a_4)$ with 
\[
a_1={p}^{1/2}\frac{\lambda_{p}+\sqrt{\lambda_{p}^2-4}}{2},~a_2={p}^{1/2}\frac{\lambda_{p}-\sqrt{\lambda_{p}^2-4}}{2},~a_3={p}^{-1/2}\frac{\lambda_{p}+\sqrt{\lambda_{p}^2-4}}{2},~a_4={p}^{-1/2}\frac{\lambda_{p}-\sqrt{\lambda_{p}^2-4}}{2}
\]
as the Satake parameter of the parabolic induction from $|\det|_{\bA}^{-1/2}\sigma\times|\det|_{\bA}^{1/2}\sigma$ at a prime $p \nmid N$. 
Now note that $\OO(3,3)$ and $\SO(3,3)$ has the same maximal split torus. In view of the isomorphism of the split tori for $\PGL_4$ and $\OO(3,3)$ the corresponding Satake parameter for the $\OO(3,3)$-setting is
\[
\diag(p,1,\left(\frac{\lambda_{p}+\sqrt{\lambda_{p}^2-4}}{2}\right)^2,\left(\frac{\lambda_{p}+\sqrt{\lambda_{p}^2-4}}{2}\right)^{-2},1,{p}^{-1}),
\]
which is conjugate to the Satake parameter as in Theorem \ref{explicit-cusp-rep} under the action of the Weyl group. 

We now prove that the parabolic induction ${\rm Ind}_{P(\bA)}^{\OO(3,3)(\bA)}\sigma$ has the Satake parameter above. 
We note that by the accidental isomorphism $(\GL_2\times \GL_2)/\{(z,z)\mid z\in \GL_1\}\simeq \GSO(2,2)$, the Satake parameter $\diag(a_1,a_2,a_3,a_4)=\diag(a_1,a_2)\times\diag(a_3,a_4)$ is mapped to that of $\GSO(2,2)$ given by
\[
\diag(\frac{a_1}{a_3},\frac{a_1}{a_4},\frac{a_2}{a_3},\frac{a_2}{a_4})=\diag(p,p(\frac{\lambda_{p}+\sqrt{\lambda_{p}^2-4}}{2})^2,p(\frac{\lambda_{p}+\sqrt{\lambda_{p}^2-4}}{2})^{-2},p).
\]
In addition, we remark that $\diag(p,p,p,p)$ corresponds to the character of the similitude factor of $\GSO(2,2)(\Q_p)$. We thereby see that ${\rm Ind}_{P(\bA)}^{\OO(3,3)(\bA)}\sigma$ has the desired Satake parameter at $p\nmid N$ since the representation $\sigma$, viewed as that of the Levi subgroup of $\OO(3,3)(\A)$, has the same Satake parameter. 

To conclude the proof, as has been pointed out in \cite[Section 5.1]{LNP}, we remark that it is valid for the non-connected group $\OO(3,3)$ that conjugacy classes of the Satake parameters by the Weyl group classify irreducible unramified principal series, up to isomorphisms. We therefore see that $\pi_F$ is nearly equivalent to an irreducible constituent of ${\rm Ind}_{P(\bA)}^{\OO(3,3)(\bA)}\sigma$, as required.
\end{proof}
\subsection{Global standard $L$-function for $F_f$}
We define the standard $L$-function of the orthogonal group $\cG$, following Sugano \cite[Section 7,~(7,6)]{Sug}. The local factors for places $p\nmid d_B$ are well known. We find them in \cite[Section 7,~(7,6)]{Sug}. For places $p|d_B$, the case of $(n_0,\partial)=(4,2)$ in \cite[Section 7~(7.6)]{Sug} is valid. We define the standard $L$-function by the Euler product over all finite primes. Putting the local datum of Theorem \ref{explicit-cusp-rep} (ii) and (iii) together, we have the following:
\begin{proposition}\label{Std-L-function}
Suppose that a Maass cusp form $f$ is a new form in $S(\Gamma_0(N), r)$ and recall that $\sigma$ denotes the cuspidal representation of $\GL_2(\A)$ generated by $f$. Let $\Pi = {\rm Ind}_{P_{2,2}(\A)}^{\GL_4(\A)} (|\det|_{\bA}^{-1/2}\sigma\times|\det|_{\bA}^{1/2}\sigma)$, with the parabolic subgroup $P_{2,2}$ of $\GL_4$ with Levi part $\GL_2\times \GL_2$. By $L(F_f,{\rm std},s)$~(respectively~$L(\Pi,\wedge,s)$) we denote the standard $L$-function for the lift $F_f$~(respectively~exterior square $L$-function of $\Pi$). We have
\[
L(F_f,{\rm std},s)=L(\Pi,\wedge,s)=L(\sym^2(f),s)\zeta(s-1)\zeta(s)\zeta(s+1),
\]
where the Riemann zeta function $\zeta(s)$ is defined by the Euler product over all finite primes. 
\end{proposition}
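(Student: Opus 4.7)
The plan is to verify both equalities place by place, by computing the local Euler factors of each of the three $L$-functions and matching them. The two equalities are treated independently. For $L(F_f,\mathrm{std},s)$, the input is Theorem~\ref{explicit-cusp-rep}, which determines $\pi_p$ at every finite place, so Sugano's definition of the standard local $L$-factor can be unwound directly. For $L(\Pi,\wedge,s)$, the key is the global isobaric decomposition of $\Pi$, which reduces the exterior square $L$-function to well-known objects on $GL_2$.

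For $L(F_f,\mathrm{std},s)$ at a prime $p\nmid d_B$, Theorem~\ref{explicit-cusp-rep}(ii) gives the Satake parameters $(\alpha_p^2,p,1,1,p^{-1},\alpha_p^{-2})$ with $\alpha_p = \tfrac{1}{2}(\lambda_p+\sqrt{\lambda_p^2-4})$, and Sugano's formula produces the degree-six Euler factor $\prod_i(1-a_i p^{-s})^{-1}$ in these parameters. Grouping factors yields exactly $L_p(\sym^2(f),s)\,\zeta_p(s-1)\zeta_p(s)\zeta_p(s+1)$, using $\alpha_p\cdot\alpha_p^{-1} = 1$ and the standard formula $L_p(\sym^2(f),s) = (1-\alpha_p^2 p^{-s})^{-1}(1-p^{-s})^{-1}(1-\alpha_p^{-2} p^{-s})^{-1}$. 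At a prime $p\mid d_B$, $\pi_p$ is by Theorem~\ref{explicit-cusp-rep}(iii) the spherical constituent of $I(\chi)$ with $\chi(p)=p$, and I would then apply Sugano's formula with $(n_0,\partial)=(4,2)$ from \cite[Section 7, (7.6)]{Sug}, where the anisotropic part of the quadratic form is the reduced norm of $B_p$. Unwinding this formula in our setting should give the local factor $(1-p^{-s})^{-2}(1-p^{1-s})^{-1}(1-p^{-1-s})^{-1}$. On the $GL_2$ side, since $f$ is a newform of square-free level $d_B$, the local component $\sigma_p$ at $p\mid d_B$ is a twisted Steinberg with Atkin--Lehner eigenvalue $\epsilon_p$, so $\lambda_p = -\epsilon_p = \pm 1$ and $L_p(\sym^2(f),s) = (1-p^{-s})^{-1}$; multiplying by $\zeta_p(s-1)\zeta_p(s)\zeta_p(s+1)$ reproduces the same Euler factor. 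The main obstacle is precisely the explicit unwinding of Sugano's formula in the non-split case at the ramified prime.

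For the identity $L(\Pi,\wedge,s) = L(\sym^2(f),s)\zeta(s-1)\zeta(s)\zeta(s+1)$, I would use the global isobaric-sum decomposition $\Pi = \pi_1\boxplus\pi_2$ with $\pi_1 = |\det|_{\bA}^{-1/2}\sigma$ and $\pi_2 = |\det|_{\bA}^{1/2}\sigma$. The standard identity for exterior square $L$-functions of isobaric sums gives
\begin{equation*}
L(\Pi,\wedge^2,s) = L(\wedge^2\pi_1,s)\,L(\wedge^2\pi_2,s)\,L(\pi_1\times\pi_2,s).
\end{equation*}
Since $\sigma$ has trivial central character, $\wedge^2\pi_i = \omega_{\pi_i}$ equals $|\cdot|_{\bA}^{-1}$ for $i=1$ and $|\cdot|_{\bA}^{+1}$ for $i=2$, giving $L(\wedge^2\pi_1,s)L(\wedge^2\pi_2,s) = \zeta(s-1)\zeta(s+1)$. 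Moreover $L(\pi_1\times\pi_2,s) = L(\sigma\times\sigma,s) = L(\sym^2(f),s)\,\zeta(s)$, using $L(\wedge^2\sigma,s)=\zeta(s)$ for $\sigma$ with trivial central character. Assembling these factors yields the claim.
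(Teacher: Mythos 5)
Your strategy of matching Euler factors place by place is the paper's strategy, and the split-prime computation and the overall reduction of $L(\Pi,\wedge,s)$ to $\zeta(s+1)\zeta(s-1)L(\sigma\times\sigma,s)$ are in line with what the paper does. However, both of the concrete local factors you propose at a ramified prime $p\mid d_B$ are incorrect, and they happen to compensate each other, which is why your final answer looks consistent.

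First, Sugano's formula \cite[Section 7 (7.6)]{Sug} with $\chi(p)=p$ does not yield $(1-p^{-s})^{-2}(1-p^{1-s})^{-1}(1-p^{-1-s})^{-1}$. The local factor is
\[
(1-\chi(p)p^{-s})^{-1}(1-\chi(p)^{-1}p^{-s})^{-1}(1-p^{-s})^{-1}(1-p^{-s-1})^{-1}
=(1-p^{1-s})^{-1}(1-p^{-1-s})^{-2}(1-p^{-s})^{-1},
\]
i.e.\ the double pole is at $\zeta_p(s+1)$, not at $\zeta_p(s)$. Second, for $p\mid d_B$ the local component $\sigma_p$ is a twisted Steinberg (a special representation), so one cannot read off $L_p(\mathrm{sym}^2(f),s)$ by plugging $\lambda_p^2=1$ into an unramified formula; the correct value, from Gelbart--Jacquet \cite[p.\ 485]{GeJa} (the $\mathrm{sym}^2$ lift of Steinberg on $\GL_2$ is Steinberg on $\GL_3$), is $L_p(\mathrm{sym}^2(f),s)=(1-p^{-s-1})^{-1}$, not $(1-p^{-s})^{-1}$. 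With these corrections,
\[
L_p(\mathrm{sym}^2(f),s)\,\zeta_p(s-1)\zeta_p(s)\zeta_p(s+1)=(1-p^{1-s})^{-1}(1-p^{-1-s})^{-2}(1-p^{-s})^{-1},
\]
which matches the corrected Sugano factor. You should redo this calculation carefully; as written, your intermediate claims are both false and the agreement of the end results is a coincidence.

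On the exterior square side, the formal identity $\wedge^2(\pi_1\boxplus\pi_2)=\wedge^2\pi_1\oplus\wedge^2\pi_2\oplus(\pi_1\otimes\pi_2)$ is a statement about Langlands parameters; to use it for the exterior square $L$-function as actually defined (via the local integral representation), one needs the corresponding factorization of the local $L$-factor at the ramified places $p\mid d_B$, which is the content of Jo's theorem \cite[Theorem 5.7]{J} (and $L_p(\sigma,\wedge,s)=\zeta_p(s)$ at such $p$ is \cite[Proposition 4.1]{J}, not a triviality about the central character). The paper appeals to these results; your sketch should at minimum acknowledge that this local step needs a reference rather than being a formal consequence of the global isobaric sum.
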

\begin{proof}
We explain only how to get the equality for the local factors for $p | N$ since the local factors at $p\nmid N$ are calculated in a formal manner by using the explicit formula for the Satake parameters of $F_f$ and $\Pi$, where see the proof of Proposition \ref{CAP-forms} for the Satake parameter of $\Pi$. 

According to \cite[Section 7~(7.6)]{Sug} the local factors of $L(F_f,{\rm std},s)$ are written as
\[
(1-\chi(p)p^{-s})^{-1}(1-\chi(p)^{-1}p^{-s})^{-1}(1-p^{-s})^{-1}(1-p^{-s-1})^{-1}.
\]
Now note that, for $p|N$, the local component of the cuspidal representation generated by $f$ is a (twisted) Steinberg representation.
From \cite[p485]{GeJa} we then know that the local symmetric square $L$-function $L_{p}(\sym^2(f),s)$ is $(1-p^{-(s+1)})^{-1}$ for $p|N$. We thereby obtain the local factors of $L(F_f,{\rm std},s)$ at $p|N$.

We are left with the proof of $L(F_f,{\rm std},s)=L(\Pi,\wedge,s)$ at $p|N$. We use the recent result by Y. Jo \cite[Theorem 5.7]{J} to see that the local factor of $L(\Pi,\wedge,s)$ at $p|N$ admits a decomposition into the product 
\[
L_p(\sigma,\wedge,s+1)L_p(\sigma,\wedge,s-1)L_p(\sigma\times\sigma,s)
\]
of the local exterior square $L$-function and the local Rankin-Selberg $L$-function for $\sigma$. 
We can verify that the local exterior $L$-functions of $\sigma$ at finite primes are nothing but the local Riemann zeta function~(cf.~\cite[Proposition 4.1]{J}). From \cite[(1.4.3)]{GeJa} we deduce $L_p(\sigma\times\sigma,s)=\zeta_p(s)\zeta_p(s+1)$. 
As a result we obtain the desired coincidence $L(F_f,{\rm std},s)=L(\Pi,\wedge,s)$.
\end{proof}

\begin{remark}
The above coincidence of the two $L$-functions is expected in the framework of the Langlands $L$-functions~(for instance see \cite[Section 4]{GT}). 
We remark that our example is given for non-generic representations while the case of generic representations is known to be proved by Shahidi's theory \cite[Theorem 3.5]{Sha}~(see \cite[Lemma 3.5]{GT}).
\end{remark}

	\vskip 1in
\noindent
Hiro-aki Narita\\
Department of Mathematics\\
Faculty of Science and Engineering\\
Waseda University\\
3-4-1 Ohkubo, Shinjuku, Tokyo 169-8555, Japan\\
{\it E-mail address}:~hnarita@waseda.jp
\\[10pt]
Ameya Pitale\\
Department of Mathematics\\
University of Oklahoma\\
Norman, Oklahoma, USA\\ 
{\it E-mail address}:~apitale@ou.edu
\\[10pt]
Siddhesh Wagh\\
Department of Mathematics\\
Bar Ilan University\\
Ramat Gan, Israel\\ 
{\it E-mail address}:~siddesw@biu.ac.il	
\end{document}